\documentclass[12pt]{amsart}

\usepackage{amsmath,amsfonts,amssymb}
\usepackage{mathrsfs}
\usepackage{widebar}
\usepackage{fontcommander}
\usepackage{mathtools}
\usepackage{amsthm}
\usepackage[colorlinks=true]{hyperref}
\usepackage{amscd}
\usepackage{enumerate}
\usepackage{float}
\usepackage[dvipsnames]{xcolor}
\usepackage[all,cmtip,2cell]{xy} % xymatrix
\usepackage{tikz}									%tikz
\usetikzlibrary{decorations.pathmorphing,shapes}
\usetikzlibrary{matrix}
\usetikzlibrary{arrows}
\usetikzlibrary{patterns}
\usetikzlibrary{snakes}
%\usetikzlibrary{arrows.meta}

\def\overcal#1{\overbar{#1}{\mathcal}}
\makefontcommander{\overcal}{oc}
\makefontcommands ABCDEFGHIJKLMNOPQRSTUVWXYZ \endmakefontcommands

\def\overnorm#1{\overbar{#1}{\mathnormal}}
\makefontcommander{\overnorm}{on}
\makefontcommands ABCDEFGHIJKLMNOPQRSTUVWXYZ \endmakefontcommands
\makefontcommands abcdefghijklmnopqrstuvwxyz \endmakefontcommands

\def\undernorm#1{\underbar{#1}{\mathnormal}}
\makefontcommander{\undernorm}{u}
\makefontcommands ABCDEFGHIJKLMNOPQRSTUVWXYZ \endmakefontcommands

\def\underscr#1{\underbar{#1}{\mathscr}}
\makefontcommander{\underscr}{us}
\makefontcommands ABCDEFGHIJKLMNOPQRSTUVWXYZ \endmakefontcommands

\def\undercal#1{\underbar{#1}{\mathcal}}
\makefontcommander{\undercal}{uc}
\makefontcommands ABCDEFGHIJKLMNOPQRSTUVWXYZ \endmakefontcommands

\makefontcommander{\mathcal}{c}
\makefontcommands ABCDEFGHIJKLMNOPQRSTUVWXYZ \endmakefontcommands

\makefontcommander{\mathscr}{s}
\makefontcommands ABCDEFGHIJKLMNOPQRSTUVWXYZ \endmakefontcommands

\makefontcommander{\widetilde}{t}
\makefontcommands ABCDEFGHIJKLMNOPQRSTUVWXYZ \endmakefontcommands

\def\tildescr#1{\widetilde{\mathscr{#1}}}
\makefontcommander{\tildescr}{ts}
\makefontcommands ABCDEFGHIJKLMNOPQRSTUVWXYZ \endmakefontcommands

%%%%%%% theorem environments %%%%%%%%%%%%

\theoremstyle{plain}
\newtheorem{theorem}{Theorem}
\numberwithin{theorem}{section}

\theoremstyle{plain}
\newtheorem{proposition}[theorem]{Proposition}
%\numberwithin{theorem}{section}

\theoremstyle{plain}
\newtheorem{lemma}[theorem]{Lemma}
%\numberwithin{theorem}{section}

\theoremstyle{plain}
\newtheorem{corollary}[theorem]{Corollary}
%\numberwithin{theorem}{section}

\theoremstyle{definition}
\newtheorem{definition}[theorem]{Definition}
%\numberwithin{definition}{section}

\theoremstyle{definition}
\newtheorem{example}[theorem]{Example}
%\numberwithin{example}{section}

\theoremstyle{remark}
\newtheorem{remark}[theorem]{Remark}
%\numberwithin{remark}{section}

%%%%%%%%%%%% markups %%%%%%%%%%%%%%%%%%%

\newcommand{\ChDan}[1]{{#1}}
\newcommand{\ChQile}[1]{{#1}}

\newcommand{\ChMartin}[1]{{#1}}
\newcommand{\ChJonathan}[1]{{#1}}

%\newcommand{\ChDan}[1]{{\color{BrickRed} #1}}
%\newcommand{\ChQile}[1]{{\color{Violet} #1}}
%\newcommand{\ChSteffen}[1]{{\color{PineGreen} #1}}
%\newcommand{\ChMartin}[1]{{\color{Dandelion} #1}}
%\newcommand{\ChJonathan}[1]{{\color{MidnightBlue} #1}}

%%%%%%%%%%%%%%%%%%%%%%%%%%%%%%%%Qile's notation

%\newcommand{\cO}{\mathcal{O}}  % structure sheaf

\renewcommand{\AA}{\mathbb{A}} % Affine
\newcommand{\PP}{\mathbb{P}} % Projective

\newcommand{\CC}{\mathbb{C}} % complex numbers

\newcommand{\RR}{\mathbb{R}} % complex numbers
\newcommand{\Gm}{\mathbb{G}_m} % complex numbers
 % complex numbers

\newcommand{\ZZ}{\mathbb{Z}} % intergers

\newcommand{\NN}{\mathbb{N}} % natural numbers

\newcommand\spec{\operatorname{Spec}} % spectrum

\DeclareMathOperator{\cok}{coker}

%%%%%%%%%%%%%%%%%%%%%%%%%%%%%%%%%%%%%%%%%%%%%%%%%
%  Jonathan's notation
%%%%%%%%%%%%%%%%%%%%%%%%%%%%%%%%%%%%%%%%%%%%%%%%%

\newcommand{\Log}{{\mathbf{Log}}}
\newcommand{\uLog}{{\underline{\mathbf{Log}}}}

\newcommand{\et}{\mathrm{\acute{e}t}}
\newcommand{\LogSch}{\mathbf{LogSch}} % the category of logarithmic schemes
\newcommand{\Mon}{\mathbf{Mon}}       % category of monoids
\DeclareMathOperator{\im}{im}
\newcommand{\rest}[1]{\big|_{#1}}
\newcommand{\KF}{\mathbf{KF}}
\newcommand{\AF}{\mathbf{AF}}

\newcommand{\RPC}{\mathbf{RPC}}

%%%%%%%%%%%%%%%%%%%%%%%%%%%%%%%%%%%%%%%%%%%%%%%%%
%						Martin's notation
%%%%%%%%%%%%%%%%%%%%%%%%%%%%%%%%%%%%%%%%%%%%%%%%%

\newcommand{\hooklongrightarrow}{\lhook\joinrel\longrightarrow}

\providecommand{\cat}[1]{{\mathsf{#1}}}

\newcommand{\vir}{{\rm vir}}
\renewcommand{\exp}{{\rm exp}}
\newcommand{\R}{\mathbb{R}}
\newcommand{\Rbar}{\overline{\mathbb{R}}}

\newcommand{\A}{\mathbb{A}}
\newcommand{\G}{\mathbb{G}}
\newcommand{\bfp}{\mathbf{p}}
\newcommand{\frakS}{\mathfrak{S}}
\newcommand{\calA}{\mathcal{A}}
\newcommand{\calO}{\mathcal{O}}
\newcommand{\calH}{\mathcal{H}}
\newcommand{\calX}{\mathcal{X}}

\newcommand{\Deltabar}{\overnorm{\Delta}}
\newcommand{\Sigmabar}{\overnorm{\Sigma}}
\newcommand{\sigmabar}{\overnorm{\sigma}}

\newcommand{\Mbar}{\overnorm{M}}

\providecommand{\Kim}{\cat{Kim}}
\providecommand{\Kimstab}{\Kim^{\rm stab}}
\providecommand{\JLi}{\cat{Li}}
\providecommand{\JListab}{\JLi^{\rm stab}}

\providecommand{\ACGS}{\cat{ACGS}}
\providecommand{\ACGSstab}{\ACGS^{\rm stab}}
\providecommand{\Log}{\cat{Log}}
\providecommand{\AFstab}{\cat{AF}^{\rm stab}}

\DeclareMathOperator{\Spec}{Spec}
\DeclareMathOperator{\Hom}{Hom}
\DeclareMathOperator{\trop}{trop}
\DeclareMathOperator{\val}{val}

\DeclareMathOperator{\Span}{Span}

\DeclareMathOperator{\rank}{rank}
\DeclareMathOperator{\id}{id}

\begin{document}

\author[Abramovich, Chen, Marcus, Ulirsch and Wise]{Dan Abramovich, Qile Chen, Steffen Marcus, Martin Ulirsch, and Jonathan Wise}
\date{\today}
\title[Skeletons and fans]{Skeletons and fans of logarithmic structures}

\thanks{Research by Abramovich and Ulirsch is supported in part by NSF grant
   DMS-1162367 and  BSF grant 2010255;
Chen is supported in part by NSF grant DMS-1403271; Wise is supported by an NSA Young Investigator's Grant, Award \#H98230-14-1-0107.}

\address[Abramovich]{Department of Mathematics\\
Brown University\\
Box 1917\\
Providence, RI 02912\\
U.S.A.}
\email{abrmovic@math.brown.edu}

\address[Chen]{Department of Mathematics\\
Columbia University\\
Rm 628, MC 4421\\
2990 Broadway\\
New York, NY 10027\\
U.S.A.}
\email{q\_chen@math.columbia.edu}

\address[Marcus]{Mathematics and Statistics\\
The College of New Jersey\\
Ewing, NJ 08628\\
U.S.A.}
\email{marcuss@tcnj.edu}

\address[Ulirsch]{Department of Mathematics\\
Brown University\\
Box 1917\\
Providence, RI 02912\\
U.S.A.}
\email{ulirsch@math.brown.edu}

\address[Wise]{University of Colorado, Boulder\\
Boulder, Colorado 80309-0395\\ USA}
\email{jonathan.wise@math.colorado.edu}

\maketitle

\setcounter{tocdepth}{1}
\tableofcontents

\section{Introduction}
\label{sec:intro}
\label{Sec:intro}

%> > 1. Intro and other fluff: Dan
%> > talk about log structures, Kato fans, the stack LOG, cone complexes and
%> > artin fans
%> > 
%> > 2. Log structures: Qile and Steffen
%> > 
%> > 3. Kato fans and resolution of singularities: Jonathan
%> > 
%> > 4. Artin fans: Jonathan
%> > 
%> > 5. Application of LOG and Artin fans to obstructions: Qile and Steffen
%> > (includes material in ACadmanFantechiW,MCavalieriW,AW,ACMW..)
%> > 
%> > 6. Polyhedral complexes and skeletons: Martin
%> > 
%> > 7. Analytification of Artin fans : Martin
%> > 
%> > 8. Into the future: Dan
%> > 

\subsection{Toric varieties, toroidal embeddings and logarithmic structures} Toric varieties were introduced in  \cite{Demazure} and studied in many sources, see for instance  \cite{KKMS, MO, Danilov, Oda, Fulton_toric, tv}. They are the quintessence of combinatorial algebraic geometry: there is a category of combinatorial objects called fans, which is equivalent to the category of toric varieties with torus-equivariant morphisms between them. Further, classical tropical geometry probes the combinatorial structure of subvarieties of toric varieties.  We  review this theory in utmost brevity in Section~\ref{Sec:toric} below.

In \cite{KKMS} some of this picture was generalized to {\em toroidal embeddings}, especially {\em toroidal embeddings without self intersections}, and their corresponding {\em polyhedral cone complexes} (Section \ref{Sec:toroidal}). Following Kato \cite{Kato-toric} we argue that the correct generality is that of {\em fine and saturated logarithmic structures}. Working over perfect fields, toroidal embeddings can be identified as logarithmically regular varieties. Since logarithmic structures might not be familiar to the reader, we provide a brief review of the necessary definitions in Section \ref{sec:logstr}. To keep matters as simple as possible,  all the logarithmic structures we use below are fine and saturated (Definition \ref{Def:fine}).  These are the logarithmic structures closest to toroidal embeddings which still allow us to pass to arbitrary subschemes.

The purpose of this text is to survey a number of ways one can think of generalizing fans of toric varieties to the realm of logarithmic structures: Kato fans, Artin fans, polyhedral cone complexes, skeletons. These are all closely related and, under appropriate assumptions, equivalent. But the different ways they are realized provide for entirely different applications. We do not address skeletons of logarithmic structures over non-trivially valued nonarchimedean fields---see Werner's contribution \cite{Werner}.
%and, one hopes, potential applications.\Steffen{This sentence seems a bit awkward... I think maybe its the use of the word \emph{applications} twice.}  
\ChDan{Nor do we address the general theory of skeletons of Berkovich spaces and their generalizations \cite{Berkovich-contraction,Hrushovski-Loeser,MacPherson}.}

\subsection{Kato fans} In \cite{Kato-toric}, Kato associated a combinatorial structure $F_X$, which has since been called {\em the Kato fan of $X$}, to  a logarithmically regular scheme $X$ (with Zariski-local charts), see Section \ref{sec:kato}. This is a reformulation of the polyhedral cone complexes of \cite{KKMS} which realizes them within the category of monoidal spaces. In \cite{Ulirsch_functroplogsch}, one further generalizes the construction of the associated Kato fan to {\em logarithmic structures without monodromy}. As in \cite{KKMS}, Kato fans provide a satisfying theory encoding logarithmically smooth birational modifications in terms of subdivisions of Kato fans. Procedures for resolution of singularities  of polyhedral cone complexes of Kato fans are given in  \cite{KKMS} and \cite{Kato-toric}. As an outcome, we obtain a combinatorial procedure for resolution of singularities of logarithmically smooth structures without self-intersections.

Kato fans, or generalizations of them, can be constructed for more general logarithmic structures.  We briefly discuss such a construction using sheaves on the category of Kato fans.  A different, and possibly more natural approach, is obtained using Artin fans.

\subsection{Artin fans and Olsson's stack of logarithmic structures} In order to import notions and structures from scheme theory to logarithmic geometry, Olsson \cite{Olsson_log} showed that a logarithmic structure $X$ on a given underlying scheme $\uX$ is equivalent to a  morphism $\uX \to \uLog$, where $\uLog$ is a rather large, zero-dimensional   Artin stack  - the moduli stack of logarithmic structures. It carries a universal logarithmic structure whose associated logarithmic algebraic stack we denote by $\Log$ - providing a universal family of logarithmic structures  $\Log \to \uLog$.

Being universal, the logarithmic stack $\Log$ cannot reflect the combinatorics of $X$. In section \ref{sec:artin} we define, following \cite{AW, ACMW}, the notion of {\em Artin fans}, and show that the morphism $X \to \Log$ factors through an initial morphism $X \to \cA_X$, where $\cA_X$ is an Artin fan and  $\cA_X \to \Log$ is \'etale and representable.

We argue that, unlike $\Log$, the Artin fan $\cA_X$ is a combinatorial object which encodes the combinatorial structure of $X$. Indeed, when $X$ is without monodromy, the underlying topological space of $\cA_X  $ is simply the Kato fan $F_X$. In other words, $\cA_X$ combines the advantages of the Kato fan $F_X$, being combinatorial, and of $\Log$, being algebraic. 

In addition $\cA_X$ exists in greater generality, when $X$ is allowed to have self-intersections and monodromy---even not to be logarithmically smooth; in a roundabout way it provides a definition of $F_X$ in this generality, by taking the underlying ``monoidal space''---or more properly, ``monoidal stack''---of $\cA_X$.

The theory of Artin fans is not perfect.  Its current foundations lack full functoriality of the construction of $X \to \cA_X$, just as Olsson's characteristic morphism $X \to \Log$ is functorial only for strict morphisms $Y \to X$ of logarithmic schemes.  In Section~\ref{sec:patch} we provide a patch for this problem, again following Olsson's ideas.

\subsection{Artin fans and unobstructed deformations} Artin fans were developed in \cite{AW, ACMW} and the forthcoming \cite{ACGS} in order to study logarithmic Gromov--Witten theory. The idea is that since an Artin fan $\cA$ is logarithmically \'etale, a map $f: C \to \cA_X$ from a curve to $\cA_X$ is logarithmically unobstructed. Precursors to this result for specific $X$ were obtained in \cite{ACFW,ACW, AMW, CMW}. In \cite{ACFW} an approach to Jun Li's {\em expanded degenerations} was provided using what in hindsight we might call  the Artin fan of the affine line $\cA = \cA_{\AA^1}$. The papers \cite{ACW, AMW, CMW} use this formalism to prove comparison results in relative Gromov--Witten theory. For logarithmically smooth $X$, the map $X \to \cA_X$  was used in \cite{AW} to prove that {\em logarithmic} Gromov--Witten invariants are invariant under logarithmic blowings up; in \cite{ACMW} it was used for general $X$  to complete a proof of boundedness of the space of logarithmic stable maps. 
We review these results, for which both the combinatorial and algebraic features of $\cA_X$ are essential, in Section \ref{sec:obs}.  They serve as evidence that the algebraic structure of Artin fans is an advantage over the purely combinatorial structure of their associated Kato fans.

\subsection{Skeletons and tropicalization} In Section \ref{sec:skel} we follow Thuillier \cite{Thuillier} and associate to a Zariski-logarithmically smooth scheme $X$ its extended cone complex $\Sigmabar_X$.  This is a variant of the cone complex $\Sigma_X$ of \cite{KKMS}, and is related to the Kato fan in an intriguing manner: $$\Sigmabar_X = F_X(\RR_{\geq 0}\sqcup\{\infty\}).$$ The complex $\Sigmabar_X$ is canonically homeomorphic to the skeleton $\frakS(X)$ of the non-archimedean space $X^\beth$ associated to the logarithmic scheme $X$, when viewing the base field as a valued field with trivial valuation, as developed by Thullier \cite{Thuillier}.  In this case there is a continuous map $X^\beth \to \Sigmabar_X$, and $\Sigmabar_X \subset X^\beth$ is a strong deformation retract.  Thuillier used this formalism to prove a compelling result independent of logarithmic or nonarchimedean considerations:  the homotopy type of the dual complex of a logarithmic resolution of a singularites does not depend on the choice of resolution.

For a general fine and saturated logarithmic scheme $X$, we still have a continuous mapping $X^\beth \to \Sigmabar_X$, although we do not have a continuous section $\Sigmabar_X \subset X$. It is argued in \cite{Ulirsch_functroplogsch} that the image of $X^\beth \to \Sigmabar_X$ can be viewed as the {\em tropicalization} of $X$. 

Note that in this discussion we have limited the base field of $X$ to have a trivial absolute value. A truly satisfactory theory must apply to subvarieties defined over valued-field extensions, in particular with non-trivial valuation.

\subsection{Analytification of Artin fans} Artin fans can be tied in to the skeleton picture via their analytifications, as we indicate in Section \ref{sec:anal}. We again consider our base field as a trivially valued field. The analytification of the morphism $\phi_X: X \to \cA_X$ is a morphism $\phi_X^\beth: X^\beth \to \cA_X^\beth$ into an analytic Artin stack $\cA_X^\beth$. The whole structure sits in a commutative diagram
$$\xymatrix{
 X^\beth\ar@/_/[d]_{\rho_X}\ar@/^/[d]^{r_X}\ar[rr]&& \cA_X^\beth\ar@/_/[d]_{\rho_\cA}\ar@/^/[d]^{r_\cA}\ar[rr] &&\Sigmabar_X\ar@/_/[d]_{\rho_\Sigma}\ar@/^/[d]^{r_\Sigma}\\ 
 X \ar[rr] && \cA_X \ar[rr] && F_X
 }$$
On the left hand side of the diagram the horizontal arrows remain in their respective categories - algebraic on the bottom, analytic on top---but discard all geometric data of $X$ and $X^\beth$ except the combinatorics of the logarithmic structure. On the right hand side the arrows  $\cA_X^\beth \to \Sigmabar_X$ and $\cA_X \to F_X$ discard the analytic and algebraic data and preserve topological and monoidal structures. In particular   $\cA_X^\beth \to \Sigmabar_X$  is a {\em homeomorphism}, endowing the familiar complex $\Sigmabar_X$ with an analytic stack structure.

\subsection{Into the future}  While we have provided evidence that the algebraic structure of $\cA_X$ has advantages over the underlying monoidal structure $F_X$, at this point we can only hope that the analytic structure $\cA_X^\beth$ would have \ChDan{significant advantages over the underlying piecewise-linear structure $\Sigmabar_X$, as applications are only starting to emerge, see \cite{Ranganathan-super}.}

We discuss some questions that might usher further applications in Section \ref{sec:future}.

\section{Toric varieties and toroidal embeddings}
\label{Sec:toric}

%\section{Toric varieties and toroidal embeddings}\label{Sec:toric}
Mostly for convenience, we work here over an algebraically closed  field $k$.
 We recall, in briefest terms, the standard setup of toric varieties and toroidal embeddings.

\subsection{Toric varieties} Consider a torus $T \simeq \Gm^n$ and a normal variety $X$ on which $T$ acts with a dense orbit isomorphic to $T$---and fix such isomorphism. Write $M$ for the character group of $T$ and write $N$ for the co-character group. Then $M$ and $N$ are both isomorphic to $\ZZ^n$ and are canonically dual to each other. We write $M_\RR = M\otimes\RR$ and $N_\RR = N\otimes\RR$ for the associated real vector spaces.

\subsection{Affine toric varieties and cones} If $X$ is affine it is canonically isomorphic to $X_\sigma = \Spec k[S_\sigma]$, where $\sigma \subset N_\RR$ is a strictly convex rational polyhedral cone, and $S_\sigma = M\cap \sigma^\vee$ is the monoid of lattice points in the $n$-dimensional cone $\sigma^\vee\subset M_\RR$ dual to $\sigma$. Such affine $X_\sigma$ contains a unique closed orbit $\cO_\sigma$, which is itself isomorphic to a suitable quotient torus of $T$.

\subsection{Invariant opens} A nonempty torus-invariant affine open subset of $X_\sigma$ is always of the form $X_\tau$ where $\tau\prec\sigma$ is a face of $\sigma$---either $\sigma$ itself or the intersection of $\sigma$ with a supporting hyperplane.  For instance the torus $T$ itself corresponds to the vertex $\{0\}$ of $\sigma$.

\subsection{Fans} Any toric variety $X$ is covered by invariant  affine opens of the form $X_{\sigma_i}$, and the intersection of $X_{\sigma_i}$ with $X_{\sigma_j}$ are of the form $X_{\tau_{ij}}$ for a common face $\tau_{ij} = \sigma_i \cap \sigma_j$. It follows that the cones $\sigma_i$ form a {\em fan} $\Delta_X$ in $N$. This means precisely that $\Delta_X$ is a collection of strictly convex rational polyhedral cones in $N$, that any face of a cone in $\Delta_X$ is a member of $\Delta_X$, and the intersection of any two cones in $\Delta_X$ is a common face. 

 And vice versa: given a fan $\Delta$ in $N$ one can glue together the associated affine toric varieties $X_\sigma$ along the affine opens $X_\tau$ to form a toric variety $X(\Delta)$.

\begin{figure}[htb]
\begin{center}
\begin{tikzpicture}
%\path[pattern=north west lines, pattern color=black!20!white] (1.5,0) -- (0,1.5) -- (0,0) -- (1.5,0);
%\path[pattern=north east lines, pattern color=black!20!white] (1.5,0) -- (-1,-1) -- (0,0) -- (1.5,0);
%\path[pattern=north east lines, pattern color=black!20!white] (0,1.5) -- (-1,-1) -- (0,0) -- (0,1.5);

\path[pattern=north west lines, pattern color=black!20!white] (1.5,0) -- (1.5,1.5) -- (0,1.5) -- (0,0) -- (1.5,0);
\path[pattern=north east lines, pattern color=black!20!white] (1.5,0) -- (1.5,-3.5) --  (-1,-1) -- (0,0) -- (1.5,0);
\path[pattern=north east lines, pattern color=black!20!white] (0,1.5) -- (-3.5,1.5) -- (-1,-1) -- (0,0) -- (0,1.5);

\fill (0,0) circle (0.020 cm);

\draw (0,0) -- (1,0);
\draw (0,0) -- (0,1);
\draw (0,0) -- (-.75,-.75);

\draw [dotted] (0,1) -- (0,2);
\draw [dotted] (1,0) -- (2,0);
\draw [dotted] (-.75,-.75) -- (-1.5,-1.5);
\end{tikzpicture}
\caption{The fan of $\PP^2$}\label{Fig:SigmaP2}
\end{center}
\end{figure}

\subsection{Categorical equivalence} One defines a morphism from a toric variety $T_1 \subset X_1$ to another $T_2\subset X_2$  to be an equivariant morphism $X_1 \to X_2$ extending a  homomorphism of tori $T_1 \to T_2$. On the other hand one defines a morphism from a fan $\Delta_1$ in $(N_1)_\RR$ to a fan $\Delta_2$ in $(N_2)_\RR$ to be a group homomorphism $N_1 \to N_2$ sending each cone $\sigma\in \Delta_1$ into some cone $\tau \in \Delta_2$.

 A fundamental theorem says
\begin{theorem}
The correspondence above extends to an equivalence of categories between the category of toric varieties over $k$ and the category of fans. 
\end{theorem}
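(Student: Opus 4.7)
The plan is to construct functors $\Phi\colon \mathbf{Fans}\to \mathbf{Toric}_k$ and $\Psi\colon \mathbf{Toric}_k\to \mathbf{Fans}$ from the correspondences sketched above, and then verify that $\Phi\circ\Psi$ and $\Psi\circ\Phi$ are naturally isomorphic to the respective identities. On objects, $\Phi$ is the gluing construction $\Delta\mapsto X(\Delta)$, and $\Psi$ sends $T\subset X$ to its cocharacter lattice $N$ together with the collection of cones $\sigma$ indexing the $T$-invariant affine open charts $X_\sigma\subseteq X$. On morphisms, a lattice map $\phi\colon N_1\to N_2$ that carries each cone of $\Delta_1$ into a cone of $\Delta_2$ dualizes, chart by chart, to monoid homomorphisms $S_{\sigma_2}\to S_{\sigma_1}$ whenever $\phi(\sigma_1)\subseteq\sigma_2$, and these assemble into an equivariant morphism of toric varieties; conversely, an equivariant $f\colon X_1\to X_2$ restricts on the dense tori to a group homomorphism, producing the required lattice map.

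The substantive inputs are: (i) the classification of normal affine toric varieties as $X_\sigma=\Spec k[M\cap\sigma^\vee]$ for a unique strictly convex rational polyhedral cone $\sigma\subset N_\RR$; and (ii) Sumihiro's theorem, ensuring that any toric variety is covered by $T$-invariant affine opens. For (i), given an affine normal variety $X$ with dense torus orbit $T\subset X$, the monoid
\[
S \;=\; \{\,m\in M \mid \chi^m\in \Gamma(X,\cO_X)\,\}
\]
is saturated in $M$ by normality, finitely generated by Gordan's lemma applied to the rational polyhedral cone $\sigma^\vee\subset M_\RR$ that it spans, and generates $M$ as a group because $T\subset X$ is dense. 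Duality recovers $\sigma$, and the inclusion $k[S]\hookrightarrow k[M]$ defines $X\subset T$. For (ii), once an invariant affine cover is produced, the identification of $T$-invariant affine opens of $X_\sigma$ with $X_\tau$ for faces $\tau\prec\sigma$, together with $X_\sigma\cap X_{\sigma'}=X_{\sigma\cap\sigma'}$, forces $\sigma\cap\sigma'$ to be a common face, so that $\Delta_X$ satisfies the fan axioms.

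With these in place, $\Psi\circ\Phi\cong\id$ is immediate from the construction of $X(\Delta)$. For $\Phi\circ\Psi\cong\id$, the invariant affine charts of $X$ glue back to recover $X$ with its $T$-action canonically, since the gluing data on both sides are controlled by the same face relations inside $\Delta_X$. On morphisms, the full faithfulness of $\Spec k[-]$ on fine saturated monoids shows that a cone-respecting lattice map produces a unique equivariant morphism on each chart, and these are compatible on overlaps by construction; conversely, the orbit-cone correspondence shows that an equivariant $f$ sends the closed orbit $\cO_{\sigma_1}$ into some orbit closure $\overline{\cO_\tau}$ with $\tau\prec\sigma_2$, forcing the induced lattice map to carry $\sigma_1$ into $\sigma_2$.

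The main obstacle is the affine classification (i): Gordan's lemma for finite generation of $S$, and the verification that the resulting cone $\sigma$ is strictly convex, which is equivalent to the density of the torus orbit in the affine variety and to the fact that $S$ spans $M_\RR$. Sumihiro's theorem in (ii) is the other non-trivial input, typically invoked as a black box; once both are granted, the remaining gluing bookkeeping and morphism analysis are routine.
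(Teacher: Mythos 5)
The paper does not prove this theorem: it is stated as ``a fundamental theorem'' of classical toric geometry, with the references \cite{KKMS}, \cite{Fulton_toric}, \cite{tv}, and so on standing in for a proof. So there is no proof in the paper to compare your attempt against. Judged on its own, your sketch is the standard classical argument---classify normal affine toric varieties by their coordinate monoids, use Sumihiro's theorem to produce a $T$-invariant affine cover, verify the fan axioms, reconstruct the variety by gluing, and match morphisms via the orbit--cone correspondence---and the overall logical architecture is sound.

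Two small points of hygiene. First, the appeal to Gordan's lemma is slightly out of order: you say $S=\{m\in M:\chi^m\in\Gamma(X,\cO_X)\}$ is ``finitely generated by Gordan's lemma applied to the rational polyhedral cone $\sigma^\vee$ that it spans,'' but you only know that the cone spanned by $S$ is rational polyhedral \emph{after} you know $S$ is finitely generated. The real source of finite generation is that $X$ is a variety, hence $\Gamma(X,\cO_X)$ is a finitely generated $k$-algebra spanned by monomials, so $S$ is a finitely generated monoid; Gordan's lemma then enters in the other direction, showing that $M\cap\sigma^\vee$ is finitely generated and saturated, so the saturated $S$ you extracted equals $M\cap\sigma^\vee$. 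Second, when you assert that the intersection $X_\sigma\cap X_{\sigma'}$ corresponds to a common face, so that the cones assemble into a fan, you are tacitly using the separatedness of $X$; it is worth saying so explicitly, since dropping separatedness is precisely where one leaves the category of fans and enters the world of ``cone spaces'' discussed later in the paper. Neither point undermines the argument, but both deserve a sentence.
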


Under this equivalence, toric birational modifications $X_1 \to X_2$ of $X_2$ correspond to subdivisions $\Delta_{X_1} \to \Delta_{X_2}$  of $\Delta_{X_2}$.

\subsection{Extended fans} The closure $\overline \cO_\sigma\subset X$ of a $T$-orbits $\cO_\sigma$, which is itself a toric variety,  is encoded in $\Delta_X$, but in a somewhat cryptic manner. Thuillier \cite{Thuillier} provided a way to add all the fans of these loci  $\overline \cO_\sigma$ and obtain a compactification $\Delta_X\subset \Deltabar_X$: instead of gluing together the cones $\sigma$ along their faces, one replaces $\sigma$ with a natural compactification, the {\em extended  cone} $$\sigmabar:=\Hom_{\Mon}(S_\sigma, \RR_{\geq 0} \cup \{\infty\}),$$  where  the notation $\Hom_{\Mon}$ stands for the set of monoid homomorphisms. This has the effect of adding, in one step, lower dimensional cones isomorphic to $\sigma/\Span{\tau}$ at infinity corresponding to  the closure of $\cO_\tau$ in $X_\sigma$, for all $\tau\prec \sigma$. We still have that  $\overnorm{\tau}_{ij} = \sigmabar_i \cap \sigmabar_j$, and one  can glue together  these extended cones  to obtain the {\em extended fan} $\Deltabar_X$.

\begin{figure}[htb]
\begin{center}
\begin{tikzpicture}
%\draw [black!20!white, fill=black!20!white] (1,0) -- (0,1) -- (-0.7,-0.7) -- (1,0);
\path[pattern=north west lines, pattern color=black!20!white] (1.5,0) -- (1.5,1.5) -- (0,1.5) -- (0,0) -- (1.5,0);
\path[pattern=north east lines, pattern color=black!20!white] (1.5,0) -- (1.5,-3.5) --  (-1,-1) -- (0,0) -- (1.5,0);
\path[pattern=north east lines, pattern color=black!20!white] (0,1.5) -- (-3.5,1.5) -- (-1,-1) -- (0,0) -- (0,1.5);

\fill (2,2) circle (0.05 cm);
\fill (0,0) circle (0.020 cm);
\fill (0,2) circle (0.03 cm);
\fill (-1.5,-1.5) circle (0.03 cm);
\fill (-5,2) circle (0.05 cm);
\fill (2,-5) circle (0.05 cm);
\fill (2,0) circle (0.03 cm);

\draw (0,0) -- (1,0);
\draw (0,0) -- (0,1);
\draw (2,0) -- (2,1);
\draw (0,2) -- (1,2);
\draw (0,0) -- (-.75,-.75);
\draw (0,2) -- (-4,2);
\draw (2,0) -- (2,-4);
\draw (-1.5,-1.5) -- (1,-4);
\draw (-1.5,-1.5) -- (-4,1);

\draw [dotted] (0,1) -- (0,2);
\draw [dotted] (1,0) -- (2,0);
\draw [dotted] (2,1) -- (2,2);
\draw [dotted] (1,2) -- (2,2);
\draw [dotted] (-.75,-.75) -- (-1.5,-1.5);
\draw [dotted] (-4,2) -- (-5,2);
\draw [dotted] (-4,1) -- (-5,2);
\draw [dotted] (1,-4) -- (2,-5);
\draw [dotted] (2,-4) -- (2,-5);

\end{tikzpicture}
\caption{The extended fan of $\PP^2$}\label{Fig:SigmabarP2}
\end{center}
\end{figure}

%The combinatorial structure of $\Deltabar_X$ is basically the same as that of $\Delta_X$. 

\subsection{Toroidal embeddings}\label{Sec:toroidal} The theory of toroidal embedding was developed in \cite{KKMS} in order to describe varieties that look locally like toric varieties. A toroidal embedding $U\subset X$ is a dense open subset of a normal variety $X$ such that, for any closed point $x$, the completion $\widehat U_x \subset \widehat X_x$ is isomorphic to the completion  of an affine toric variety $T_x\subset X_{\sigma_x}$. Equivalently, each $x\in X$ should admit an \'etale neighborhood $\phi_x:V_x \to X$ and an \'etale morphism $\psi_x:V_x \to X_{\sigma_x}$ such that $\psi_x^{-1} T_x = \phi_x^{-1} U$. Note that the open set $U\subset X$ serves as a global structure connecting the local pictures $T_x\subset X_{\sigma_x}$.

\subsection{The cone complex of a toroidal embedding without self-intersections}\label{Sec:the-cone-complex} If the morphisms $\phi_x:V_x \to X$  are assumed to be Zariski open embeddings, then the toroidal embedding is a {\em toroidal embedding without self-intersections}. In this case the book  \cite{KKMS}  provides a polyhedral cone complex $\Sigma_X$ replacing the fan of a toric \ChDan{variety.} The main difference is that the cones of the complex $\Sigma_X$ do not lie linearly  inside an ambient space of the form \ChDan{$N_\RR$.}

For a toroidal embedding without self-intersections, the strata of $X_{\sigma_x}$ glue together to form a stratification $\{\cO_i\}$ of $X$. For $x\in \cO_i$   the cone $\sigma_x$, along with its sublattice $\sigma_x \cap N$, is independent of   $x$. It can be described canonically as \ChDan{follows. Let} $X_i$ be the {\em star} of $\cO_i$, namely the union of strata containing $\cO_i$ in their \ChDan{closures.  It} is an open subset of $X$.  Let $\Mbar_i$ be  the monoid of effective Cartier divisors on $X_i$ supported on $X_i\smallsetminus U$. Let $N_\sigma  = \Hom_{\Mon}(\Mbar_i, \NN)$ be the dual monoid. Then $\sigma_x = (N_\sigma)_\RR$ is the associated cone. When one passes to another stratum contained in $X_i$ one obtains a face $\tau\prec \sigma$, and $N_\tau = \tau\cap N_\sigma$. These cones glue together naturally, in a manner compatible with the sublattices, to form a cone complex $\Sigma_X$ with integral structure. 
Unlike the case of fans, the intersection  $\sigma_i \cap\sigma_j$ could be a whole common sub-fan of $\sigma_i$ and $\sigma_j$, and not necessarily one cone.

\begin{figure}[htb]
\begin{center}
\begin{tikzpicture}

%\path [pattern=north west lines, pattern color=black!20!white] (-3,1) -- (1,0) .. controls +(160:0.4cm) and +(-90:0.6cm) .. (-0.5,1) -- (-3,1);
\path [pattern=vertical lines, pattern color=black!20!white] (-3,1) -- (0,0) .. controls +(160:0.1cm) and +(-90:0.1cm) .. (-0.2,1) -- (-3,1);
%\path [pattern=north east lines, pattern color=black!20!white] (-3,1) -- (-0.5,1) .. controls +(90:0.6cm) and +(200:0.4cm) .. (1,2) -- (-3,1); 
\path [pattern=vertical lines, pattern color=black!20!white] (-3,1) -- (-0.2,1) .. controls +(90:0.1cm) and +(200:0.1cm) .. (0,2) -- (-3,1);

\draw (-3,1) -- (0,0);
\draw (-3,1) -- (0,2);
%\draw[dotted] (-3,1) -- (-1,0.33);
%\draw[dotted]  (-3,1) -- (-1,1.66);

%\fill (-0.5,1) circle (0.12cm);
\fill (-3,1) circle (0.030cm);
%\fill (0.5,1) circle (0.05cm);

%\draw (-3,1) -- (-0.5,1);
%\draw [dotted] (-1.5,1) -- (-0.5,1);

%\draw [dotted, pattern=dots, pattern color=black!10!white] (1,0) .. controls +(20:0.4cm) and +(-90:0.6cm) .. (2.5,1) .. controls +(90:0.6cm) and +(-20:0.4cm) .. (1,2);
%\draw [dashed, pattern=dots, pattern color=black!10!white](1,0) .. controls +(160:0.cm) and +(-90:0.6cm) .. (-0.5,1) .. controls +(90:0.6cm) and +(200:0.4cm) .. (1,2);
\draw [dotted, pattern=dots, pattern color=black!10!white] (0,0) .. controls +(20:0.1cm) and +(-90:0.1cm) .. (0.2,1) .. controls +(90:0.1cm) and +(-20:0.1cm) .. (0,2);
\draw [dashed, pattern=dots, pattern color=black!10!white](0,0) .. controls +(160:0.1cm) and +(-90:0.1cm) .. (-0.2,1) .. controls +(90:0.1cm) and +(200:0.1cm) .. (0,2);
%\draw [dotted] (0.5,1) .. controls +(90:0.2cm) and +(-20:0.3cm) .. (0,2);
%\draw [dashed](-0.5,1) .. controls +(90:0.2cm) and +(200:0.3cm) .. (0,2);

\end{tikzpicture}
%\includegraphics[width=4cm]{Selfintersection}
%\caption{The fan of the nodal cubic drawn as a cone-complex with edges glued.}\label{Fig:Selfintersection}
\caption{The complex of $\PP^2$ with the divisor consisting of a line and a transverse conic: the cones associated to the zero strata meet along both their edges.}\label{Fig:Complexnotfan}
\end{center}
\end{figure}

%\begin{figure}[htb]
%\begin{center}
%\includegraphics[width=4cm]{Complexnotfan}
%\caption{The complex of $\PP^2$ with the divisor consisting of a line and a transverse conic: the cones associated to the zero strata meet along both their edges.}\label{Fig:Complexnotfan}
%\end{center}
%\end{figure}

\subsection{Extended complexes} Just as in the case of toric varieties, these complexes canonically admit compactifications $\Sigma_X \subset \Sigmabar_X$, obtained by replacing each cone $\sigma_x$ by the associated extended cone $\sigmabar_x$. This structure was introduced in Thuillier's \cite{Thuillier}.

\subsection{Functoriality} 

Let $U_1\subset X_1$ and $U_2\subset X_2$ be toroidal embeddings without self-intersections and $f: X_1 \to X_2$ a dominant morphism  such that $f(U_1) \subset U_2$. Then one canonically obtains a mapping $\Sigma_{X_1} \to \Sigma_{X_2}$, simply because Cartier divisors supported away from $U_2$ pull back to Cartier divisors supported away from $U_1$. This mapping is continuous, sends cones into cones linearly, and sends lattice points to lattice points. Declaring such mappings to be mappings of polyhedral cone complexes with integral structure, we obtain a functor from toroidal embeddings to polyhedral cone complexes. This functor is far from being an equivalence. 

In \cite{KKMS} one focuses on {\em toroidal modifications}   $f: X_1 \to X_2$, namely those birational modifications described on charts of $X_2$ by toric modifications of the toric varieties $X_{\sigma_x}$. Then one shows

\begin{theorem}
The correspondence $X_1 \mapsto \Sigma_{X_1}$ extends to an equivalence of categories between toroidal modifications of $U_2\subset X_2$, and subdivisions of $\Sigma_{X_2}$. 
\end{theorem}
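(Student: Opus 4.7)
The plan is to reduce the theorem to the corresponding equivalence for toric varieties (already stated above) by working locally on $X_2$ via its étale toric charts. The forward direction is essentially contained in the functoriality established just above; the substantive content lies in reversing it, namely in constructing a toroidal modification from a subdivision and checking that the construction is well-defined globally.

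For the forward direction, let $f \colon X_1 \to X_2$ be a toroidal modification. Functoriality supplies a map $\Sigma_{X_1} \to \Sigma_{X_2}$ that is continuous, lattice-preserving, and linear on cones. By definition of toroidal modification, each étale chart $\psi_x \colon V_x \to X_{\sigma_x}$ of $X_2$ identifies $f^{-1}(V_x) \to V_x$ with the base change of a toric modification $X_{\sigma_x'} \to X_{\sigma_x}$. The toric equivalence of categories then identifies the part of $\Sigma_{X_1}$ lying over $\sigma_x$ with a subdivision of $\sigma_x$. Gluing these identifications over the stratification of $X_2$ shows that $\Sigma_{X_1} \to \Sigma_{X_2}$ is a subdivision.

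For the reverse direction, given a subdivision $\Sigma' \to \Sigma_{X_2}$, I would construct $X_1 \to X_2$ chart by chart. Each cone $\sigma$ of $\Sigma_{X_2}$ is refined by a sub-fan $\Sigma'_\sigma$; on any étale chart $\phi \colon V \to X_2$ with $\psi \colon V \to X_\sigma$, the toric equivalence produces a toric modification $X_{\Sigma'_\sigma} \to X_\sigma$, and pulling back along $\psi$ gives a modification $\widetilde V \to V$. I would then show these local pieces satisfy étale descent: on a fibre product $V \times_{X_2} V'$ with charts into $X_\sigma$ and $X_{\sigma'}$, the cones $\sigma$ and $\sigma'$ have a common face corresponding to the stratum containing the image, and the subdivision of that face is intrinsic (being prescribed by $\Sigma'$), so the two pullback modifications agree. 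The resulting $X_1 \to X_2$ is toroidal with cone complex $\Sigma'$ by construction. Finally, to confirm the equivalence, one checks that these two constructions are mutually quasi-inverse: on each chart the roundtrip recovers the original subdivision or toric modification by the toric theorem, and uniqueness propagates to the global situation.

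The principal obstacle is the gluing in the reverse direction. It has two parts: showing that the identification of face-cones of $\Sigma_{X_2}$ with monoids of Cartier divisors (as in Section~\ref{Sec:the-cone-complex}) is canonical enough that two charts assign compatible fans to the common stratum; and verifying that the toric modification of $X_\sigma$ determined by a subdivision commutes with the open immersion $X_\tau \hookrightarrow X_\sigma$ corresponding to a face $\tau \prec \sigma$. Both are standard facts in toric geometry, but in the toroidal setting the charts $V \to X_\sigma$ are only étale rather than isomorphisms, so the compatibilities must be checked as genuine descent data rather than absorbed into a single toric statement. Once these checks are in hand, the theorem follows by gluing and by appealing to the toric equivalence cone-by-cone.
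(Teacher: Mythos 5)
The paper states this theorem without proof, citing \cite{KKMS}, so there is no in-paper argument to compare against. Your overall strategy---forward direction via functoriality of the cone complex and the toric equivalence on charts, reverse direction via chart-by-chart pullback and gluing---is the right reduction, and you correctly identify the gluing step as the substantive one. The weak point is the descent argument. Pulling back the toric modification $X_{\Sigma'_\sigma} \to X_\sigma$ along a chart $\psi\colon V \to X_\sigma$ produces a modification of $V$ that \emph{a priori} depends on $\psi$. On a double overlap $W = V \times_{X_2} V'$ the two competing pullbacks both factor through the affine open $X_\tau \subset X_\sigma$ corresponding to the common face $\tau$, but they use \emph{two different \'etale maps} $W \to X_\tau$; your remark that ``the subdivision of that face is intrinsic'' identifies the target combinatorial data but does not by itself identify the two pulled-back schemes over $W$.

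What makes the gluing actually work is that any proper toric modification is the normalized blowup of a torus-invariant fractional ideal, and such an ideal can be read off the subdivision in terms of the monoid of boundary Cartier divisors---data that is intrinsic to the toroidal structure (Section~\ref{Sec:the-cone-complex}), hence independent of the chosen chart. This is precisely how \cite{KKMS} proceed: instead of constructing the modification chart by chart and verifying a cocycle condition, they attach to the subdivision $\Sigma'$ a sheaf of fractional ideals on $X_2$ directly and define $X_1 \to X_2$ as the normalization of the associated blowup (more generally, as the normalization of glued relative spectra). The toric charts are used only a posteriori to verify that the result is a toroidal modification with the expected cone complex. Your sketch is right in spirit, but without stating and proving this intrinsic characterization of the local modification, the assertion ``the two pullback modifications agree'' is not yet justified.
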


\section{Logarithmic structures}
\label{sec:logstr}
We briefly review the theory of logarithmic structures \ChDan{\cite{Kato}}. 

\subsection{Notation for monoids}

\begin{definition}\label{monoid}
A \emph{monoid} is a commutative semi-group with a unit. A morphism of monoids is required to preserve the unit element. 

We denote the category of monoids by the symbol $\Mon$.
\end{definition}

Given a monoid $P$, we can associate a group 
\begin{equation*}{
P^{gp}:=\{(a,b) |(a,b)\sim(c,d) \ \mbox{if} \ \exists s \in
P \ \mbox{such that} \ s+a+d=s+b+c\}.} 
\end{equation*}
 Note that any morphism from $P$ to an abelian group factors through $P^{gp}$ uniquely.

\begin{definition}\label{Def:integral} % Integral monoid
A monoid $P$ is called \emph{integral} if $P\rightarrow P^{gp}$ is injective. It is called \emph{fine} if it is integral and finitely generated.  

An integral monoid $P$ is said to be \emph{saturated} if whenever $p\in P^{gp}$ and $n$ is a positive integer such that $np \in P$ then $p \in P$.

As has become customary, we abbreviate the combined condition ``fine and saturated"  to \emph{fs}. 
\end{definition}

\subsection{Logarithmic structures}

\begin{definition}\label{log-str}
Let $\uX$ be a scheme. A \emph{pre-logarithmic structure} on $\uX$ is a sheaf of
monoids $M_{X}$ on the small \'etale site $\et(\uX)$ combined with a
morphism of sheaves of monoids: $\alpha
: M_{X} \longrightarrow \mathcal{O}_{\uX}$, called the \emph{structure morphism}, where
we view $\mathcal{O}_{\uX}$ as a monoid under multiplication. A pre-logarithmic
structure is called a \emph{logarithmic structure} if
$\alpha^{-1}(\mathcal{O}_{\uX}^{*})\cong \mathcal{O}^{*}_{\uX}$ via $\alpha$. The
pair $(\uX,M_{X})$ is called a \emph{logarithmic scheme}, and will be denoted by $X$.  

The structure morphism $\alpha$ is frequently denoted $\exp$ and an inverse $\mathcal{O}_X^\ast \rightarrow M_X$ is denoted $\log$.  
\end{definition}

\begin{definition}
\label{Def:chara}
Given a logarithmic scheme $X$, the quotient sheaf $\onM_{X}=M_{X}/\cO_{\uX}^{*}$ is called the \emph{characteristic monoid}, or just the \emph{characteristic}, of the logarithmic structure $M_{X}$.
\end{definition}

\begin{definition} % morphism of pre-logarithmic str.
Let  $M$ and $N$ be pre-logarithmic structures on $\uX$. A {\em morphism}
between them is a morphism $M \rightarrow N$ of sheaves of monoids
which is compatible with the structure morphisms. 
\end{definition}

\begin{definition}
Let  $\alpha : M \rightarrow \cO_{\uX}$ be a pre-logarithmic structure on $\uX$. We define the  \emph{associated  logarithmic structure} $M^{a}$ to be the push-out of 
\[
\xymatrix{
\alpha^{-1}(\cO_{\uX}^{*}) \ar[d] \ar[r] & M \\
\cO_{\uX}^{*}
}
\]
in the category of sheaves of monoids on $\et(\uX)$, endowed with 
\[M^{a} \rightarrow \cO_{\uX}  \qquad (a,b)\mapsto \alpha(a)b \qquad\qquad (a \in M, b \in \cO_{\uX}^{*}).\]
\end{definition}

The following are two standard examples from \cite[(1.5)]{Kato}:

\begin{example}\label{Ex:divisorial-log-structure}
Let $\uX$ be a smooth scheme with an effective  divisor $D \subset \uX$. Then we have a standard logarithmic structure $M$ on $\uX$ associated to the pair $(\uX, D)$, where
\[
M_X := \{f \in \cO_{\uX} \ | \ f|_{\uX\setminus D} \in \cO^*_{\uX}\}
\]
with the structure morphism $M_{X} \to \cO_{\uX}$ given by the canonical inclusion. \ChDan{This is already a logarithmic structure, as any section of $\cO^*_{\uX}$ is already in $M_X$.}
\end{example}

\begin{example}\label{ex:affine-toric-log}
Let $P$ be an \emph{fs} monoid, and $\uX = \spec \ZZ[P]$ be the associated affine toric scheme. Then we have a standard logarithmic structure $M_{X}$ on $\uX$ associated to the pre-logarithmic structure
\[
P \to \ZZ[P]
\]
defined by the obvious inclusion. 

We denote by $\spec(P\to\ZZ[P])$ the log scheme $(\uX,M_{X})$.
\end{example}

\subsection{Inverse images}

Let $f:\uX \rightarrow \uY$ be a morphism of schemes. Given a logarithmic structure
$M_{Y}$ on $\uY$, we can define a logarithmic structure on $\uX$, called the
\emph{inverse image} of $M_{Y}$, to be the logarithmic structure associated to the
pre-logarithmic structure $f^{-1}(M_{Y})\rightarrow
f^{-1}(\cO_{\uY})\rightarrow \cO_{\uX}$. This is usually denoted by
$f^{*}(M_{Y})$. Using the inverse image of logarithmic structures, we can
give the following definition. 

\begin{definition}\label{Def:log-mor} % morphism of logarithmic scheme
A \emph{morphism of logarithmic schemes} $X\rightarrow Y$
consists of a morphism of underlying schemes $f:\uX\rightarrow \uY$, and a
morphism  $f^{\flat}: f^{*}M_{Y}\rightarrow M_{X}$ of logarithmic
structures on $\uX$.  
The morphism is said to be \emph{strict} if $f^\flat$ is an isomorphism.

We denote by $\LogSch$ the category of 
logarithmic schemes.  
\end{definition}

\subsection{Charts of logarithmic structures}

\begin{definition}\label{Def:chart} % Charts of logarithmic schemes
Let $X$ be a logarithmic scheme, and $P$ a monoid. A {\em chart} for $M_{X}$
is a morphism $P\rightarrow \Gamma(X,M_{X})$, such that the induced
map of logarithmic strucutres $P^{a}\to M_{X}$ is an isomorphism, where
$P^{a}$ is the logarithmic structure associated to the pre-logarithmic structure given
by
$P\rightarrow \Gamma(X,M_{X})\rightarrow \Gamma(X,\cO_{\uX})$.  
\end{definition}

In fact, a chart of $M_{X}$ is equivalent to a morphism $$f:X \rightarrow \Spec(P\rightarrow\ZZ[P])$$%
such that $f^{\flat}$ is an isomorphism. In general, we have the following:

\begin{lemma}\cite[1.1.9]{Ogus} The mapping
$$\Hom_{\LogSch}(X,\Spec(P\rightarrow \ZZ[P]))\ \to \ \Hom_{\Mon}(P,\Gamma(X,M_{X}))$$
associating to $f$ the composition
 $$\xymatrix{P \ar[r]&\Gamma(\uX, P_X)\ar[r]^{\Gamma(f^\flat)} & \Gamma(\uX, M_X)}$$
 is a bijection.
 \end{lemma}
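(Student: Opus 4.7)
The plan is to construct an explicit inverse to the stated map and verify the two composites are the respective identities; everything is driven by two universal properties, namely that of the monoid algebra $\ZZ[P]$ and that of the logarithmic structure associated to a pre-logarithmic structure.

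First I would construct the inverse. Starting from a monoid homomorphism $h\colon P\to\Gamma(X,M_X)$, compose with the structure map $\alpha\colon M_X\to\cO_{\uX}$ to obtain a monoid homomorphism $\alpha\circ h\colon P\to\Gamma(\uX,\cO_{\uX})$ into the multiplicative monoid of global sections. The universal property of the monoid algebra $\ZZ[P]$, which represents the functor sending a ring $R$ to $\Hom_{\Mon}(P,R)$, then yields a ring homomorphism $\ZZ[P]\to\Gamma(\uX,\cO_{\uX})$ and hence a morphism of schemes $\uf\colon\uX\to\Spec\ZZ[P]$. It remains to produce the morphism $f^{\flat}$ on log structures. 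By definition $f^\ast M_Y$ is the logarithmic structure associated to the pre-logarithmic structure $f^{-1}M_Y\to\cO_{\uX}$; using the canonical chart $P\to M_Y$ on $\Spec(P\to\ZZ[P])$ I would identify $f^\ast M_Y$ equivalently with the logarithmic structure associated to the constant pre-logarithmic structure $P_{\uX}\to\cO_{\uX}$ given by $\alpha\circ h$. The map $h\colon P_{\uX}\to M_X$ is a morphism of pre-logarithmic structures on $\uX$, because by construction $\alpha_{M_X}\circ h=\alpha\circ h$. Applying the universal property of the associated logarithmic structure extends $h$ uniquely to a morphism $f^{\flat}\colon f^\ast M_Y\to M_X$, giving the sought-after morphism of logarithmic schemes.

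Next I would check that the two constructions are mutually inverse. In one direction, starting with $h$ and forming the associated $(\uf,f^{\flat})$, the composition $P\to\Gamma(\uX,P_X)\to\Gamma(\uX,M_X)$ through $f^{\flat}$ recovers $h$ by the very construction of $f^{\flat}$ as the logification of $h$. In the other direction, starting with a morphism $f\colon X\to\Spec(P\to\ZZ[P])$ and forming the associated monoid map $\tilde h\colon P\to\Gamma(\uX,M_X)$, one retraces the argument: the composition $P\to\Gamma(\uX,M_X)\xrightarrow{\alpha}\Gamma(\uX,\cO_{\uX})$ is precisely the ring homomorphism encoded by $\uf$ via the universal property of $\ZZ[P]$, so the scheme map is reproduced; and the morphism $f^{\flat}$ is reproduced because both $f^{\flat}$ and the map produced from $\tilde h$ are characterized by the same factorization through $f^\ast M_Y$, uniqueness following from the universal property of the associated logarithmic structure.

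The main obstacle is the bookkeeping identification $f^\ast M_Y\cong (P_{\uX}\to\cO_{\uX})^a$; once this is in hand, both the forward and reverse constructions are immediate applications of universal properties. The identification itself is standard and uses the fact that for the canonical chart $P\to\Gamma(\Spec(P\to\ZZ[P]),M_Y)$ the induced morphism $P^a\to M_Y$ is an isomorphism, so taking inverse images commutes with the formation of the associated logarithmic structure. Modulo this point, the proof is a formal comparison of representing objects.
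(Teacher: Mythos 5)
Your proof is correct. The paper simply cites this result from Ogus's book without reproducing the argument, and your proposal gives precisely the standard proof one finds there: the bijection is an instance of the adjunction between pre-logarithmic and logarithmic structures combined with the universal property of the monoid algebra, and the key technical input---that pullback commutes with logification, so $f^{*}M_{Y}\cong (P_{\uX}\to\cO_{\uX})^{a}$---is the same identification Ogus (and Kato before him) use.
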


We will see in Example~\ref{ex:artin-cones} that Artin cones have a similar universal property on the level of characteristic monoids.

\begin{definition}\label{Def:fine} % Fine logarithmic scheme
A logarithmic scheme $X$ is said to be \emph{fine}, if \'etale locally there is a chart $P\rightarrow M_{X}$ with $P$ a fine monoid. If moreover $P$ can be chosen to be saturated, then $X$ is called a \emph{fine and saturated} (or \emph{fs}) logarithmic structure.  Finally, if $P$ can be chosen isomorphic to $\NN^k$ we say that the logarithmic structure is \emph{locally free}.
\end{definition}

%A fine logarithmic scheme $X$ is {\em saturated} if, for every geometric point $\bar{x} \to \uX$, the monoid $\onM_{\bar{x}, X}$ is saturated as in Definition \ref{Def:integral}. 

%The following lemma gives a canonical choice of the monoid $P$ involved in the chart in the fine and saturated case.

\begin{lemma}
Let $X$ be a fine and saturated logarithmic scheme. Then for any geometric point $\bar{x} \in X$, there exists an \'etale neighborhood $U \to X$ of $\bar{x}$ with a chart $\onM_{\bar{x},X} \to M_{U}$, such that the composition $\onM_{\bar{x},X} \to M_{U} \to \onM_{\bar{x},X}$ is the identity.
\end{lemma}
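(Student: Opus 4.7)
Write $Q = \onM_{\bar{x},X}$; as the stalk of the characteristic sheaf of an fs logarithmic structure, $Q$ is fine, saturated, and sharp (units of $\onM_X$ would lift to units of $M_X$, which lie in $\cO^{*}$ and hence are zero in $\onM_X$). The plan is to construct a monoid splitting of the quotient $M_{X,\bar{x}} \twoheadrightarrow Q$ at the geometric point, spread it out to an \'etale neighborhood, and verify that the resulting homomorphism is a chart.

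First, I construct the splitting. Observe that if $a\in Q^{gp}$ satisfies $na = 0$ for some $n>0$, saturation forces both $a$ and $-a$ into $Q$, and sharpness then gives $a=0$. Hence $Q^{gp}$ is finitely generated and torsion-free, i.e.\ free, so the short exact sequence of stalks
\[
0 \to \cO^{*}_{\uX,\bar{x}} \to M^{gp}_{X,\bar{x}} \to Q^{gp} \to 0
\]
splits. Fix a group-theoretic section $s\colon Q^{gp} \to M^{gp}_{X,\bar{x}}$. Because $M_X$ is integral we have $M_{X,\bar{x}} = M^{gp}_{X,\bar{x}} \times_{Q^{gp}} Q$, so $s$ restricts to a monoid section $s\colon Q \to M_{X,\bar{x}}$ of the quotient map.

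Next, I spread $s$ out. Choose generators $q_1,\dots,q_r$ of $Q$ together with a finite presentation by relations. Each $s(q_i)$ is represented by a section $\widetilde{q}_i\in \Gamma(U, M_X)$ over some \'etale neighborhood $U \to X$ of $\bar{x}$; after shrinking $U$, the presentation relations among the $s(q_i)$ hold in $\Gamma(U, M_X)$ as well, so $s$ extends to a monoid homomorphism $\beta\colon Q \to \Gamma(U, M_X)$ whose germ at $\bar{x}$ equals $s$.

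Finally, I check that $\beta$ is a chart satisfying the required compatibility. At the stalk, the composition $Q \to M_{X,\bar{x}} \to Q$ is the identity by construction, so it equals the identity on $U$ after one more shrinking. For the chart property, sharpness of $Q$ together with the fact that $s$ is a section of the characteristic map force $\alpha^{-1}(\cO^{*}) = \{0\}$ inside $Q$; hence $Q^{a}_{\bar{x}} = Q \oplus \cO^{*}_{\uX,\bar{x}}$, and the induced map $Q^{a}_{\bar{x}}\to M_{X,\bar{x}}$, $(q,u)\mapsto s(q)\cdot u$, is bijective (surjectivity because any $m\in M_{X,\bar{x}}$ equals $s(\bar m)\cdot u$ with $\bar m$ its image in $Q$; injectivity by projecting back to $Q$). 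Since isomorphism of log structures is an \'etale-local condition, $\beta$ is a chart on a sufficiently small neighborhood. The main obstacle is the first step: both saturation and sharpness of $Q$ are needed to deduce freeness of $Q^{gp}$ and hence the splitting of the stalk extension, which is the nontrivial input that allows one to produce a monoid section of the characteristic map at all.
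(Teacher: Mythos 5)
Your proof is correct and reproduces the standard argument behind the reference cited in the paper (\cite[Proposition~2.1]{Olsson_log}), whose key point is exactly as you identify: sharpness and saturation force $Q^{\mathrm{gp}}$ to be free, so the stalk extension $0 \to \cO^{*}_{\uX,\bar x} \to M^{gp}_{X,\bar x} \to Q^{gp} \to 0$ splits, the splitting restricts to a monoid section $Q \to M_{X,\bar x}$ via the cartesian description of $M_{X,\bar x}$ inside $M^{gp}_{X,\bar x}$, and one then spreads out and verifies the chart condition using fineness. The only quibble is that the composition $\onM_{\bar{x},X} \to M_{U} \to \onM_{\bar{x},X}$ is evaluated at the stalk, so it is the identity immediately from the section property of $s$ --- the ``one more shrinking'' of $U$ there is unnecessary.
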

\begin{proof}
This is a special case of \cite[Proposition 2.1]{Olsson_log}.
\end{proof}

\subsection{Logarithmic differentials}

To form sheaves of logarithmic differentials, we add to the sheaf $\Omega_{\uX/\uY}$ symbols of the form $d\log(\alpha(m))$ for all elements $m\in M_X$, as follows:

\begin{definition}\label{log-differential}
Let $f: X \to Y$ be a morphism of fine logarithmic schemes. We introduce the sheaf of relative logarithmic differentials $\Omega_{X/Y}^{1}$ given by  \[\Omega_{X/Y}^{1}=\Bigl( \Omega_{\uX/\uY}\oplus(\cO_{\uX}\otimes_{\ZZ} M^{gp}_{X}) \Bigr)\big/\mathcal{K}\]
where $\mathcal{K}$ is the $\cO_{\uX}$-module generated by local 
sections of the following forms:
 \begin{enumerate}
   \item $(d\alpha(a),0)-(0,\alpha(a)\otimes a)$ with $a\in M_{X}$;
   \item $(0,1\otimes a)$ with $a\in \im(f^{-1}(M_{Y})\rightarrow M_{X})$.
 \end{enumerate}
The universal derivation $(\partial,D)$ is given by $\partial : \cO_{\uX}\stackrel{d}\rightarrow\Omega_{\uX/\uY}\rightarrow\Omega_{X/Y}^{1}$ and $D: M_{X}\rightarrow \cO_{\uX}\otimes_{\ZZ} M_{X}^{gp}\rightarrow\Omega_{X/Y}^{1}$.
\end{definition}

\begin{example}
Let $h: Q \rightarrow P$ be a morphism of fine monoids. Denote $X=\spec(P\rightarrow\ZZ[P])$ and $Y=\spec(Q\rightarrow\ZZ[Q])$. Then we have a morphism $f: X\rightarrow Y$ induced by $h$. A direct calculation shows that $\Omega_{f}^{1}=\cO_{\uX}\otimes_{\ZZ} \cok (h^{gp})$. The free generators correspond to the logarithmic differentials $d\log(\alpha(p))$ for $p\in P$, which are regular on the torus $\spec\ZZ[P^{gp}]$, modulo those coming from $Q$.   This can also be seen from the universal property of the sheaf of logarithmic differentials.
\end{example}

\subsection{Logarithmic smoothness}

Consider  the following commutative diagram of logarithmic schemes illustrated with solid arrows:

\begin{equation}\label{diag:smooth}
\vcenter{\xymatrix{
T_{0} \ar[r]^{\phi}  \ar[d]^{j}_{J}& X \ar[d]^{f}\\
T_{1} \ar[r]_{\psi} \ar@{-->}[ur] & Y
}}
\end{equation}
where $j$ is a {\em strict} closed immersion (Definition~\ref{Def:log-mor}) defined by the ideal $J$ with $J^{2}=0$. We  define logarithmic smoothness by the infinitesimal lifting property:

\begin{definition}\label{defn:logsmooth}%%%%Definition for logarithmic smooth
A morphism $f:X\rightarrow Y$ of fine logarithmic schemes is called \emph{logarithmically smooth} (resp.\ \emph{logarithmically \'etale}) if the underlying morphism $\uX\rightarrow \uY$ is locally of finite presentation and for any commutative diagram (\ref{diag:smooth}), \'etale locally on $T_{1}$  there exists a (resp.\ there exists a unique) morphism $g:T_{1}\rightarrow X$ such that $\phi=g\circ j$ and $\psi=f\circ g$.
\end{definition}

 We have the following useful criterion for smoothness from \cite[Theorem 3.5]{Kato}.

\begin{theorem}[K.\ Kato]\label{KatoStrThm} %%%%%%%%%Kato's Structure theorem
Let $f:X\rightarrow Y$ be a morphism of fine logarithmic schemes. Assume we have a chart $Q\rightarrow M_{Y}$, where $Q$ is a finitely generated integral monoid. Then the following are equivalent:
 \begin{enumerate}
  \item $f$ is logarithmically smooth (resp. logarithmically \'etale);
  \item \'etale locally on $X$, there exists a chart $(P_{X}\rightarrow M_{X},Q_{Y}\rightarrow M_{Y},Q\rightarrow P)$ extending the chart $Q_{Y}\rightarrow M_{Y}$, satisfying the following properties.
    \begin{enumerate}
      \item The kernel and the torsion part of the cokernel (resp. the kernel and the cokernel) of $Q^{gp}\rightarrow P^{gp}$ are finite groups of orders invertible on $X$.
      \item The induced morphism from $\uX\rightarrow \uY\times_{\spec\ZZ[Q]} \spec\ZZ[P]$ is \'etale
in the usual sense.
    \end{enumerate}
 \end{enumerate}
\end{theorem}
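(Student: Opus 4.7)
The plan is to prove the two implications separately; the reverse direction $(2) \Rightarrow (1)$ is largely formal, while the forward direction $(1) \Rightarrow (2)$ requires the delicate construction of a suitable chart.

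For $(2) \Rightarrow (1)$, I would begin by establishing log smoothness of the toric model $f_0 \colon \spec(P \to \ZZ[P]) \to \spec(Q \to \ZZ[Q])$ induced by the monoid map $Q \to P$. Given a test diagram as in (\ref{diag:smooth}) with $Y$ replaced by $\spec(Q \to \ZZ[Q])$, producing the dashed arrow reduces to extending a monoid homomorphism $P \to M_{T_1}$ compatibly with its restriction to $T_0$ and the fixed composition $Q \to M_{T_1}$. Decomposing $\cok(Q^{gp} \to P^{gp})$ into free and torsion parts, the obstruction vanishes on the free part (free abelian groups are $\ZZ$-projective) and lies in $\mathrm{Ext}^1_\ZZ(T, J)$ on the torsion part $T$, where $J \subset \cO_{T_1}$ is the square-zero ideal viewed additively via the exponential. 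Condition (a) forces $|T|$ to be invertible on $X$, so multiplication by $|T|$ is an isomorphism on $J$ after an étale refinement, killing the obstruction. Hypothesis (b) then exhibits $f$ étale-locally as a strict étale morphism into a base change of $f_0$; since strict étale morphisms are log étale and log smoothness is stable under composition and base change, $f$ is log smooth.

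For $(1) \Rightarrow (2)$, I would work étale-locally near a geometric point $\bar x \in X$ with image $\bar y \in Y$. The lemma preceding the theorem furnishes a chart $\onM_{X,\bar x} \to M_U$ on some étale neighborhood $U$ that splits the projection to the characteristic. Setting $P_1 = \onM_{X,\bar x}$ and combining with the fixed map $Q \to \onM_{Y,\bar y} \to P_1$ yields a joint chart of $f$. The numerical condition (a) for $Q^{gp} \to P_1^{gp}$ can be extracted by restricting log smoothness of $f$ to the log point at $\bar x$ and analyzing the resulting map of characteristic monoids; a non-invertible torsion element in the kernel or cokernel would produce, via the corresponding $\mu_n$-torsor extension of the toric model, a square-zero test diagram against which the lifting property fails. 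To obtain (b), I would enlarge $P_1$ to $P = P_1 \oplus \NN^r$ by adjoining free generators that map to units of $M_X$ chosen so that the induced strict morphism $g \colon \uX \to \uY \times_{\spec \ZZ[Q]} \spec \ZZ[P]$ has vanishing relative differentials. The existence of such units is guaranteed by lifting local étale coordinates along the residual scheme-theoretic smoothness of $g$, which follows from log smoothness of $f$ via the computation $\Omega^1_{f_0} \cong \cO_{\uX} \otimes \cok(Q^{gp} \to P^{gp})$ from the preceding example together with the right exact sequence relating $\Omega^1_{X/Y}$ and $\Omega^1_g$.

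The main obstacle is the step in $(1) \Rightarrow (2)$ of simultaneously enforcing the extension of the $Q$-chart, the numerical condition (a), the characteristic-splitting requirement at $\bar x$, and the classical étaleness of $g$. The freedom to adjoin free generators mapping to units of $M_X$ is genuine but must be exercised consistently across an étale neighborhood of $\bar x$, and one must verify that the chosen units assemble into a coordinate system on the residual smooth fiber without disturbing the invertibility conditions on $Q^{gp} \to P^{gp}$. This combinatorial-infinitesimal balancing act is the technical heart of Kato's original argument and, I expect, where the bulk of the effort must be concentrated.
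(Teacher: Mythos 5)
The paper does not contain a proof of this theorem: the text above it reads ``We have the following useful criterion for smoothness from \cite[Theorem 3.5]{Kato}'' and the statement is quoted from Kato's foundational paper without argument. There is therefore no in-paper proof to compare against, so I evaluate your sketch on its own merits relative to the standard argument.

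Your outline tracks the usual proof in spirit: reduce $(2)\Rightarrow(1)$ to the toric model and compute a lifting obstruction along the square-zero extension using $\ker(M_{T_1}\rightarrow M_{T_0})\cong J$, and for $(1)\Rightarrow(2)$ start from a chart splitting the characteristic, then adjoin free generators mapping to units so that $\uX\rightarrow\uY\times_{\spec\ZZ[Q]}\spec\ZZ[P]$ becomes \'etale. However, there is a concrete gap in the $(2)\Rightarrow(1)$ direction: you analyze only the cokernel of $Q^{gp}\rightarrow P^{gp}$, but the kernel enters as a separate consistency condition before any extension problem can be posed. If $k$ lies in this kernel, the lifting constraint $c\circ h^{gp}=b^{gp}$ forces $b^{gp}(k)=1$, while commutativity of the test square only guarantees $b^{gp}(k)\in 1+J$. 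Writing $b^{gp}(k)=1+j$ with $nk=0$ ($n$ the order of $k$) gives $1+nj=1$, so $nj=0$; only because $n$ is invertible on $J$ can one conclude $j=0$. Without the kernel condition from (a), the given data may simply be incompatible and no lift of any kind exists. Your $\mathrm{Ext}^1_\ZZ(T,J)$ computation governs the cokernel obstruction once this consistency is established, but the kernel check is logically prior and must be made explicit. Secondly, in $(1)\Rightarrow(2)$ you invoke the preceding lemma to split the characteristic, but that lemma is stated in the paper only for \emph{fine and saturated} log schemes, whereas the theorem is stated for merely fine ones; and your derivation of condition (a) from ``restricting to the log point at $\bar x$'' is asserted rather than shown. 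You are right that the simultaneous arrangement of (a), (b), and the chart extension is the technical core — but the sketch currently does not commit to a sequencing of these steps that demonstrably terminates, which is precisely what Kato's original argument supplies.
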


\begin{remark}
 \begin{enumerate}
   \item We can require $Q^{gp}\rightarrow P^{gp}$ in (a) to be injective, and replace the requirement that  $\uX\rightarrow \uY\times_{\spec\ZZ[Q]} \spec\ZZ[P]$
be \'etale in (b) by requiring it to be smooth without changing the conclusion of Theorem \ref{KatoStrThm}. 
   \item In this theorem something wonderful happens, which Kato calls ``the magic of log".  The arrow in (b) shows that a logarithmically smooth morphism is ``locally toric'' relative to the base. Consider the case $Y$ is a logarithmic scheme with underlying space given by $\spec\CC$ with the trivial logarithmic structure, and $X=\spec(P\rightarrow \CC[P])$ where $P$ is a fine, saturated and torsion free monoid. Then $\uX$ is a toric variety with the action of $\spec\CC[P^{gp}]$. According to the theorem, $X$ is logarithmically smooth relative to $Y$, though the underlying space might be singular. These singularities are called toric singularities in \cite{Kato-toric}. This is closely related to the classical notion of toroidal embeddings~\cite{KKMS}.
 \end{enumerate}
\end{remark}

Logarithmic differentials behave somewhat analogously to differentials:

\begin{proposition}\label{prop:logsmoothdif} % properties of logarithmic differentials
Let $X\stackrel{f}{\rightarrow}Y\stackrel{g}{\rightarrow}Z$ be a sequence of morphisms of fine logarithmic schemes. 
 \begin{enumerate}
  \item There is a \ChDan{natural sequence} $f^{*}\Omega_{g}^{1} \rightarrow \Omega_{g f}^{1}\rightarrow \Omega_{f}^{1}\rightarrow 0$ is exact.
  \item If $f$ is logarithmically smooth, then $\Omega_{f}^{1}$ is a locally free $\cO_{X}$-module, and we have the following exact sequence: $0\rightarrow f^{*}\Omega_{g}^{1} \rightarrow \Omega_{g f}^{1}\rightarrow \Omega_{f}^{1}\rightarrow 0$.
  \item If $g f$ is logarithmically smooth and the sequence in (2) is exact and splits locally, then $f$ is logarithmically smooth.
 \end{enumerate}
\end{proposition}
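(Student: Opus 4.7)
The plan is to follow the classical proof of the cotangent exact sequence for Kähler differentials, adapted to include logarithmic symbols. The three main tools are the explicit presentation of $\Omega^1$ from Definition \ref{log-differential}, Kato's structural theorem (Theorem \ref{KatoStrThm}), and the infinitesimal lifting criterion (Definition \ref{defn:logsmooth}). For part (1), I define the two arrows by functoriality of $\Omega^1$. Surjectivity of the second arrow is immediate from the generators $da$ and $d\log m$. The extra relations cutting $\Omega_f^1$ out of $\Omega_{gf}^1$ are exactly $d\alpha(a) = 0$ and the vanishing of $1 \otimes m$ for $a, m$ in the image of $f^{-1}M_Y \to M_X$, and these are precisely the images of the standard generators of $f^*\Omega_g^1$; exactness in the middle then reduces to a direct bookkeeping verification on the presentation.

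For part (2), local freeness of $\Omega_f^1$ is étale-local on $X$, so Kato's theorem reduces to the toric model where $\uX \to \uY \times_{\spec\ZZ[Q]} \spec\ZZ[P]$ is étale and $\Omega_f^1 = \cO_X \otimes_\ZZ \cok(Q^{gp} \to P^{gp})$; since the torsion in this cokernel has order invertible on $X$, the module is locally free. For injectivity of the first arrow and local splitting, I dualize: for any $\cO_X$-module $\mathcal{F}$, $\Hom_{\cO_X}(\Omega_h^1, \mathcal{F})$ parametrises relative log derivations, equivalently retractions of the trivial square-zero extension $X[\mathcal{F}]$ in the appropriate relative category. Log smoothness of $f$ then yields exactness of
$$0 \to \operatorname{Der}_Z(Y, \mathcal{F}) \to \operatorname{Der}_Z(X, \mathcal{F}) \to \operatorname{Der}_Y(X, \mathcal{F}) \to 0$$
via the infinitesimal lifting property. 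Dualising this back and using local freeness of $\Omega_f^1$ produces both injectivity and a local splitting of the sequence in part (1).

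For part (3), I apply the infinitesimal criterion directly. Given a lifting problem $(j: T_0 \hookrightarrow T_1, \phi: T_0 \to X, \psi: T_1 \to Y)$ for $f$, composition with $g$ produces a lifting problem for $gf$, solved by hypothesis by some $h: T_1 \to X$ with $h \circ j = \phi$ and $gf \circ h = g \circ \psi$. Then $f \circ h$ and $\psi$ agree on $T_0$, so their difference determines a $Z$-derivation $D: \psi^*\Omega_g^1 \to J$, where $J$ is the ideal of $j$. Composing a local retraction $\Omega_{gf}^1 \to f^*\Omega_g^1$ (provided by the splitting in (2)) with $D$ yields a $Z$-derivation $E$ of $X$ into $J$ which restricts to $D$ along $f^*\Omega_g^1 \to \Omega_{gf}^1$. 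Modifying $h$ by $-E$ gives the desired lift $g: T_1 \to X$ with $f \circ g = \psi$ and $g \circ j = \phi$.

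The main obstacle is in part (2): identifying log derivations with suitable square-zero thickenings precisely enough to extract both injectivity and local splitting from a single application of the infinitesimal criterion, while correctly handling the monoidal structure on the thickening and on its ideal. Once this is in place, the correction argument in (3) is essentially formal, provided the retraction built in (2) respects the monoid so that the modified $h$ is an honest morphism of logarithmic schemes rather than merely of the underlying schemes.
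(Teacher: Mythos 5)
The paper itself does not prove this proposition; it merely cites \cite[Chapter IV]{Ogus}. Your dualization strategy --- interpret $\Hom_{\cO_X}(\Omega^1_h, \mathcal{F})$ as log derivations, realize these as liftings of the trivial square-zero extension $X[\mathcal{F}]$, and invoke the infinitesimal criterion --- is precisely the route taken there (and in Kato's original article), so in outline you are doing the right thing. That said, there are two concrete slips worth fixing.

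First, the ``dual sequence'' in your part (2) is written backwards. Applying $\Hom_{\cO_X}(-,\mathcal{F})$ to $f^{*}\Omega_{g}^{1} \to \Omega_{gf}^{1}\to \Omega_{f}^{1}\to 0$ yields, left-exactly,
\begin{equation*}
0 \to \operatorname{Der}_Y(X,\mathcal{F}) \to \operatorname{Der}_Z(X,\mathcal{F}) \to \Hom_{\cO_X}\bigl(f^{*}\Omega_g^1,\mathcal{F}\bigr) = \operatorname{Der}_Z\bigl(Y,f_*\mathcal{F}\bigr),
\end{equation*}
and log smoothness of $f$ gives (\'etale-local) surjectivity of the \emph{last} arrow, not injectivity of the first. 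Your version $0 \to \operatorname{Der}_Z(Y,\mathcal{F}) \to \operatorname{Der}_Z(X,\mathcal{F}) \to \operatorname{Der}_Y(X,\mathcal{F}) \to 0$ swaps the two outer terms and does not correspond to any dualized version of the sequence: $\operatorname{Der}_Y(X,\mathcal{F})$ is a \emph{submodule} of $\operatorname{Der}_Z(X,\mathcal{F})$, not a quotient. Once corrected, the surjectivity argument with $\mathcal{F} = f^*\Omega_g^1$ and the identity map does yield a local retraction of $f^{*}\Omega_{g}^{1}\hookrightarrow\Omega_{gf}^{1}$, and injectivity then follows since it is a local property --- so the \emph{idea} is right, but the displayed sequence as written would mislead.

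Second, in part (3) you reuse the letter $g$ for the constructed lift $T_1 \to X$, clashing with the morphism $g: Y \to Z$ in the statement; rename it. The rest of (3) is the standard correction argument and is fine, modulo the observation you already flag: the retraction is a morphism of $\cO_X$-modules $\Omega_{gf}^1 \to f^*\Omega_g^1$, but modifying $h$ by its image is legitimate because a log derivation simultaneously perturbs both the ring homomorphism $h^{\#}$ and the monoid homomorphism $h^{\flat}$, via the two components $(\partial, D)$ in Definition~\ref{log-differential}. For local freeness in (2), your reduction via Theorem~\ref{KatoStrThm} to $\Omega_f^1 = \cO_X \otimes_\ZZ \cok(h^{gp})$ is correct, and the observation that the torsion of $\cok(h^{gp})$ has order invertible on $X$ (hence vanishes after tensoring with $\cO_X$) does indeed finish that step.
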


A proof can be found in \cite[Chapter IV]{Ogus}.

\section{Kato fans and resolution of singularities}
\label{sec:kato}

\subsection{The monoidal analogues of schemes}

In parallel to the theory of schemes, Kato developed a theory of fans, with the role of commutative rings played by monoids.  As with schemes, the theory begins with the spectrum of a monoid:

\begin{definition}[{\cite[Definition~(5.1)]{Kato-toric}}]
Let $M$ be a monoid.  A subset $I \subset M$ is called an \emph{ideal} of $M$ if $M + I \subset I$.  If $M \smallsetminus I$ is a submonoid of $M$ then $I$ is called a \emph{prime} ideal of $M$.  The set of prime ideals of $M$ is denoted $\Spec M$ and called the \emph{spectrum} of $M$.
\end{definition}

If $f : M \rightarrow N$ is a homomorphism of monoids and $P \subset N$ is a prime ideal then $f^{-1} P \subset M$ is a prime ideal as well.  Therefore $f$ induces a morphism of spectra:  $\Spec N \rightarrow \Spec M$.

\begin{definition}[{\cite[Definition~(5.2)]{Kato-toric}}]
Suppose that $M$ is a monoid and $S$ is a subset of $M$.  We write $M[-S]$ for the initial object among the monoids $N$ equipped a morphism $f : M \rightarrow N$ such that $f(S)$ is invertible.  When $S$ consists of a single element $s$, we also write $M[-s]$ in lieu of $M[-\{ s \}]$.
\end{definition}

It is not difficult to construct $M[-S]$ with the familiar Grothendieck group construction $M \mapsto M^{gp}$ of Definition \ref{monoid}.
%\Jonathan{this is essentially a repetition, in slightly more generality, of discussion following Definition 3.1}
Certainly $M[-S]$ coincides with $M[-S']$ where $S'$ is the submonoid of $M$ generated by $S$.  One may therefore assume that $S$ is a submonoid of $M$.  Then for $M[-S]$ one may take the set of formal differences $m - s$ with $m \in M$ and $s \in S$, subject to the familiar equivalence relation:
\begin{equation*}
m - s \sim m' - s' \qquad \iff \qquad \exists \: t \in S, \: t + m + s' = t + m' + s
\end{equation*}
If $M$ is integral then one may construct $M[-S]$ as a submonoid of $M^{\rm gp}$.  

The topology of the spectrum of a monoid is defined exactly as for schemes:

\begin{definition}[{\cite[Definition~(9.2)]{Kato-toric}}]
Let $M$ be a monoid.  For any $f \in M$, let $D(f) \subset \Spec M$ be the set of prime ideals $P \subset M$ such that $f \not\in P$.  A subset of $\Spec M$ is called open if it is open in the minimal topology in which the $D(f)$ are open subsets.
\end{definition}

Equivalently $D(f)$ is the image of the map $\Spec(M[-f]) \rightarrow \Spec M$.  The intersection of $D(f)$ and $D(g)$ is $D(f + g)$ so the sets $D(f)$ form a basis for the topology of $\Spec M$.

\begin{figure}[htb]
\begin{center}

%\begin{tikzpicture}[shorten >=-5pt, decoration=snake,>={Stealth[angle'=72]}]
%\fill (1,1) circle (0.20 cm)
%      (3,1) circle (0.12 cm)
%      (1,3) circle (0.12 cm)
%      (3,3) circle (0.05 cm);
%\ChDan{
%\draw [->,decorate] (1.5,1) -- (2.5,1);
%\draw [->,decorate] (1.4,3) -- (2.6,3);
%\draw [->,decorate] (1,1.5) -- (1,2.5);
%\draw [->,decorate] (3,1.4) -- (3,2.6);
%}
%
%\node at (0.5,0.5) {$\emptyset$};
%\node at (4,0.5) {$\NN_{> 0}\times\NN$};
%\node at (0,3.5) {$\NN\times\NN_{>0}$};
%\node at (3.5,3.5) {$\NN_{>0}^2$};
%\end{tikzpicture}
\includegraphics[scale=0.6]{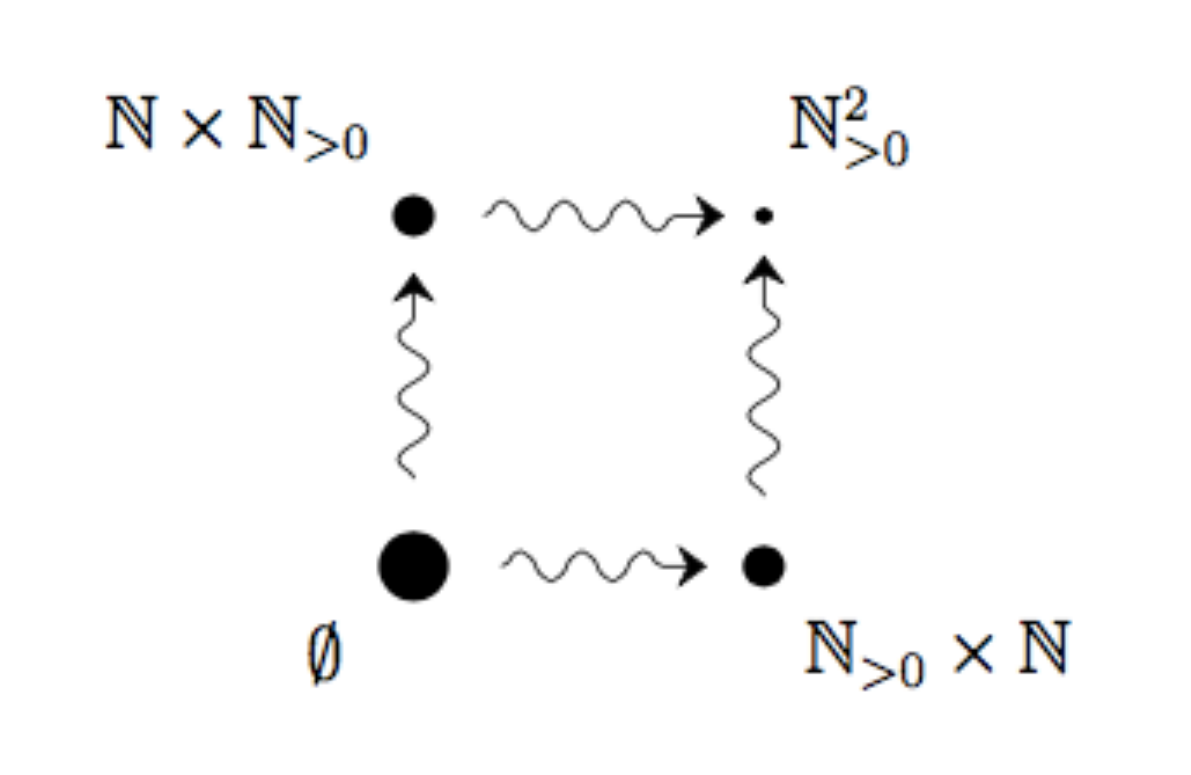}
\caption{\ChDan{Points and topology of $\Spec \NN^2.$} The big point corresponds to the ideal $\varnothing$, the intermediate points are the primes $\NN \times \NN_{>0}$ and $\NN_{>0} \times \NN$, and the small closed point is the maximal ideal $\NN_{>0}^2$. {\ChDan{The arrow indicate specialization, thus determining the topology.}}}\label{Fig:SpecN2}
\end{center}
\end{figure}

We equip $\Spec M$ with a sheaf of monoids $\cM_{\Spec M}$ where 
\begin{equation*}
\cM_{\Spec M}(D(f)) = M[-f] / M[-f]^\ast
\end{equation*}
where $M[-f]^\ast$ is the set of invertible elements of $M$.  This is a sharply monoidal space:

\begin{definition}[{\cite[Definition~(9.1)]{Kato-toric}}]
Recall that a monoid is called \emph{sharp} if its only invertible element is the identity \ChJonathan{element} $0$.  If $M$ and $N$ are sharp monoids then a sharp homomorphism $f : M \rightarrow N$ is a homomorphism of monoids such that $f^{-1} \{ 0 \} = \{ 0 \}$.
%\Jonathan{maybe this definition should go in a different section?}

A \emph{sharply monoidal space} is a pair $(S, \cM_S)$, where $S$ is a topological space and $\cM_S$ is a sheaf of sharp monoids on $S$.  A morphism of sharply monoidal spaces $f : (S, \cM_S) \rightarrow (T, \cM_T)$ consists of a continuous function $f : S \rightarrow T$ and a sharp homomorphism of sheaves of sharp monoids $f^{-1} \cM_T \rightarrow \cM_S$.%
%\Jonathan{put some comment about the relationship to locally ringed spaces?}

A sharply monoidal space is called an \emph{affine Kato fan} or a \emph{Kato cone} if it is isomorphic to $(\Spec M, \mathcal{M}_{\Spec M})$.  A sharply monoidal space is called a \emph{Kato fan} if it admits an open cover by  Kato cones.  

We call a Kato fan \emph{integral} or \emph{saturated} if it admits a cover by the spectra of monoids with the respective properties (Definition~\ref{Def:integral}).  A Kato fan is called \emph{locally of finite type} if it admits a cover by spectra of finitely generated monoids.  We use \emph{fine} as a synonym for integral and locally of finite type.
\end{definition}

\ChDan{A large collection of examples of Kato fans is obtained from fans of toric varieties (see Section \ref{Sec:fan-to-kato-fan}) or logarithmically smooth schemes (Section \ref{Sec:associated-Kato}). In particular, any toric \ChMartin{singularity}  is manifested in a Kato fan. 

Any Kato cone $\Spec M$ of a finitely generated integral monoid $M$  contains an open point corresponding to the ideal $\varnothing\subset M$, carrying the trivial stalk $M^{\operatorname{gp}}/M^{\operatorname{gp}} = 0$. We can always glue an arbitrary collection of  such Kato cones along their open points. If the collection is infinite this gives examples of  connected  Kato fans which are not quasi-compact. 
}
%\subsection{Resolution of singularities}\label{ss:resolution}

\subsection{Points and Kato cones}

\begin{definition}
Let $F' \rightarrow F$ be a morphism of fine, saturated Kato fans.  The morphism is said to be \emph{quasi-compact} if the preimage of any open subcone of $F$ is quasi-compact.
\end{definition}

\begin{lemma}
Let $F$ be a Kato fan.  There is a bijection between the open Kato subcones of $F$ and the points of $F$.
\end{lemma}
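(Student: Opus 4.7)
The plan is to define inverse maps between the set of points of $F$ and the set of open Kato subcones of $F$. To an open Kato subcone $U \subset F$, which by definition is isomorphic to $\Spec N$ with $N := \Gamma(U, \cM_U)$ a sharp monoid, associate its unique closed point $x_U$, corresponding to the maximal ideal $N \smallsetminus \{0\}$ of $N$.

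In the reverse direction, to a point $x \in F$ assign the subset $U_x$ of all generizations of $x$, i.e., all $y \in F$ with $x \in \overline{\{y\}}$. To check that $U_x$ is an open Kato subcone, I would pick any open Kato cone neighborhood $V \simeq \Spec M$ of $x$, with $x$ corresponding to a prime $P \subset M$. Then $U_x \cap V = \{Q \in \Spec M : Q \subset P\}$, which equals the finite intersection $\bigcap_i D(f_i)$ as the $f_i$ range over a finite set of generators of $M \smallsetminus P$; hence $U_x \cap V$ is open in $V$, and $U_x$ is open in $F$. Furthermore, $U_x$ is canonically isomorphic to $\Spec \cM_{F,x}$, where $\cM_{F,x}$ is the sharp stalk of the structure sheaf at $x$, so the construction depends only on $x$ and not on the choice of chart $V$.

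To verify mutual inverseness: given $x \in F$, the subcone $U_x \simeq \Spec \cM_{F,x}$ has maximal ideal pulling back along the localization map $M \to \cM_{F,x}$ to the prime $P$ representing $x$, so $x_{U_x} = x$. Given an open Kato subcone $U \simeq \Spec N$, the stalk at its closed point $x_U$ equals the sharp localization of $N$ at the complement of its maximal ideal, which coincides with $N$ itself (as $N$ is already sharp); hence $U_{x_U} = \Spec N = U$.

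The main subtlety is showing that $U_x$ is genuinely an open Kato subcone, independent of the choice of chart $V$. Both properties reduce to the intrinsic characterization $U_x = \Spec \cM_{F,x}$, together with the fact that the sharp localization of a finitely generated monoid at a prime realizes an open immersion of affine Kato fans, matching the basic open $\bigcap_i D(f_i)$ in any local chart.
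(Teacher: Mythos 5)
Your proposal is correct and follows essentially the same strategy as the paper's proof: both directions of the bijection are the same (send an open subcone to its unique closed point, and send a point $x$ to its minimal open neighborhood, namely the set of generizations of $x$). The only divergence is in how one checks that the set of generizations is an open affine subcone: the paper observes that any affine open neighborhood of $x$ has finite underlying space, hence a smallest open subset containing $x$, which must be affine since affine opens form a basis; you instead compute the set of generizations inside a chart $V \simeq \Spec M$ explicitly as $\bigcap_i D(f_i)=D(\textstyle\sum_i f_i)$ for generators $f_i$ of the face $M\smallsetminus P$, and identify the result intrinsically with $\Spec \cM_{F,x}$. Your version is slightly more explicit and has the bonus of producing the stalk description $U_x \simeq \Spec \cM_{F,x}$, though it quietly uses that faces of the monoids involved are finitely generated (true for fine monoids) in the same place the paper quietly uses finiteness of $\Spec M$.
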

\begin{proof}
Suppose that $U = \Spec M$ is an open subcone of $F$.  Let $P \subset M$ be the complement of $0 \in M$.  Then $P$ is a prime ideal, hence corresponds to a point of $F$.  

To give the inverse, we show that every point of $F$ has a minimal open affine neighborhood.  Indeed, suppose that $U$ is an open affine neighborhood of $p \in F$.  Then the underlying topological space of $U$ is finite, so there is a smallest open subset of $U$ containing $p$.  Replace $U$ with this open subset.  It must be affine, since affine open subsets form a basis for the topology of $U$.  

It is straightforward to see that these constructions are inverse to one another.
\end{proof}

\begin{lemma}
A morphism of fine, saturated Kato fans is quasi-compact if and only if it has finite fibers.  In particular, a Kato fan is quasi-compact if and only if its underlying set is finite.
\end{lemma}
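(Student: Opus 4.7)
The two assertions are closely linked: I will first prove the ``in particular'' clause (the absolute case), and then bootstrap it to the relative statement about morphisms. The preceding lemma, which associates to each point of a Kato fan a minimal open affine subcone, is the central tool.

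\textbf{Step 1: affine fine saturated Kato fans have finite underlying set.} Let $U = \Spec M$ with $M$ fine and saturated. Prime ideals of $M$ correspond bijectively to submonoidal faces $M \smallsetminus P \subset M$, i.e.\ submonoids $S \subset M$ such that $a + b \in S$ with $a, b \in M$ forces $a, b \in S$. Since $M$ saturated and finitely generated is of the form $\sigma^{\vee} \cap N$ for a strictly convex rational polyhedral cone $\sigma$, these faces correspond to the (finitely many) faces of $\sigma$.

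\textbf{Step 2: the absolute equivalence.} Suppose $F$ is quasi-compact. Open Kato subcones form a basis for the topology (since affine opens in each affine chart do); extracting a finite subcover from a cover by affine opens, Step~1 gives that the finite union has finite underlying set. Conversely, if $F$ has finite underlying set, the previous lemma assigns to each $p \in F$ a minimal open affine neighborhood $U_p$, and $F = \bigcup_{p} U_p$ is a finite union of quasi-compact opens by Step~1, hence quasi-compact.

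\textbf{Step 3: the relative statement.} For the forward direction, assume $f : F' \to F$ is quasi-compact. Given $p \in F$ with minimal open affine neighborhood $U_p$, the preimage $f^{-1}(U_p)$ is quasi-compact; by Step~2 it has finite underlying set, so the fiber $f^{-1}(p) \subset f^{-1}(U_p)$ is a fortiori finite. For the reverse direction, assume $f$ has finite fibers. Let $U \subset F$ be an open affine subcone; by Step~1 it has finitely many points $q_1, \dots, q_n$, so
\[ f^{-1}(U) \;=\; f^{-1}(q_1) \,\cup\, \cdots \,\cup\, f^{-1}(q_n) \]
is a finite union of finite sets, hence finite as a topological space. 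By Step~2, $f^{-1}(U)$ is then quasi-compact.

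\textbf{Main obstacle.} The only nontrivial input is Step~1, i.e.\ the combinatorial fact that a fine saturated monoid has only finitely many prime ideals; everything else is a formal consequence of the previous lemma together with the observation that in this setting ``quasi-compact'' and ``finite underlying set'' coincide. The minor subtlety is ensuring, in Step~2, that one can indeed extract a finite \emph{affine} subcover — this is legitimate because affine opens form a basis, so any open cover refines to an affine one.
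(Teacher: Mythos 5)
Your proof is correct. The paper does not actually supply a proof of this lemma, so there is nothing to compare your argument against; but the argument you give is exactly the natural one. Step~1 (a fine monoid has only finitely many prime ideals, because the complement of a prime is a face, and a face is determined by which of the finitely many generators it contains) is the only substantive input; Steps~2 and~3 then follow formally from the preceding lemma on minimal open affine neighborhoods together with the trivial observation that a finite topological space is quasi-compact. One small remark on Step~1: the reduction to ``$M \cong \sigma^{\vee}\cap N$'' implicitly passes to the sharp quotient $M/M^{\ast}$, which is harmless since prime ideals of $M$ and of $M/M^{\ast}$ correspond bijectively; alternatively, the purely combinatorial observation that a face of a finitely generated monoid is generated by the generators it contains (so there are at most $2^{n}$ faces) gives Step~1 without invoking saturation or any convex geometry at all, and is arguably cleaner.
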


\subsection{From fans to Kato fans}\label{Sec:fan-to-kato-fan} If $\Delta$ is a fan in $N_\RR$ in the sense of toric geometry, it gives rise to a Kato fan $F_\Delta$.  The underlying topological space of $F_\Delta$ is the set of cones of $\Delta$ and a subset is open if and only if it contains all the faces of its elements.  In particular, if $\sigma$ is one of the cones of $\Delta$ then the set of faces of $\sigma$ is an open subset $F_\sigma$ of $K$ and these open subsets form a basis.  We set $\mathcal{M}(F_\sigma) = (M \cap \sigma^\vee) / (M \cap \sigma^\vee)^\ast$ where $M$ is the dual lattice of $N$ and $\sigma^\vee$ is the dual cone of $\sigma$.  With this sheaf of monoids, $F_\sigma \simeq \Spec (M \cap \sigma^\vee) / (M \cap \sigma^\vee)^\ast$, so $F_\Delta$ has an open cover by Kato cones, hence is a Kato fan.

\subsection{Resolution of singularities of Kato fans}

Kato introduced the following monoidal space analogue of subdivisions of  fans of toric varieties, in such a way that  a subdivision of a fan $\Sigma$ gives rise  to a subdivision of the Kato fan $F_\Sigma$. A morphism of fans $\Sigma_1 \to \Sigma_2$ is a subdivision if and only if it induces a bijection on the set of lattice points $\cup_\sigma N_\sigma$.  The Kato fan notion is the direct analogue:

\begin{definition} \label{def:proper-subdivision}
A morphism $p : F' \rightarrow F$ of fine, saturated Kato fans is called a \emph{proper subdivision} if it is quasi-compact and the morphism
\begin{equation*}
\Hom(\Spec \NN, F') \rightarrow \Hom(\Spec \NN, F)
\end{equation*}
is a bijection.
\end{definition}

\begin{remark}
This definition has an appealing resemblance to the valuative criterion for properness.
\end{remark}

Explicitly subdividing Kato fans is necessarily less intuitive than subdividing fans. The following examples, which are in direct analogy to subdivisions of fans,  may help in developing intuition:

\begin{example}\label{ex:star}
\begin{enumerate}[(i)]
\item Suppose that $X$ is a fine, saturated Kato fan and $v : \Spec \NN \rightarrow X$ is a morphism.  The \emph{star subdivision} of $X$ along $v$ is constructed as follows:  If $U = \Spec M$ is an open Kato subcone of $X$ not containing $v$ then $U$ is included as an open Kato subcone of $X'$; if $U$ does contain $v$ then for each face $V = \Spec N$ contained in $U$ that does not contain $v$, we include the face $v + V=\Spec M_{v+V}$, where  $$M_{v+V}  =  \left\{\alpha \in M^{\rm gp}\ :\ \alpha \rest{V} \in N \text{ and } v^\ast \alpha \in \NN\right\}.$$
\item Let $X = \Spec M$ be a fine, saturated Kato cone.  There is a canonical morphism $\Spec \NN \rightarrow X$ by regarding $\Hom(\Spec \NN, X) = \Hom(M, \NN)$ as a monoid and taking the sum of the generators of the $1$-dimensional faces of $X$.  This morphism is called the \emph{barycenter} of $X$.

If $X$ is a fine, saturated Kato fan we obtain a subdivision $X' \rightarrow X$ by performing star subdivision of $X$ along the barycenters of its open subcones, in decreasing order of dimension.  A priori this is well defined if cones of $X$ have bounded dimension, but as the procedure is compatible with restriction to subfans, this works for arbitrary $X$. This subdivision is called the \emph{barycentric subdivision}.
\end{enumerate}
\end{example}

\begin{definition} \label{def:smoothness}
A fine, saturated Kato fan $X$ is said to be \emph{smooth} if it has an open cover by Kato cones $U \simeq \Spec \NN^r$.
\end{definition}

\begin{theorem}[{\cite[Theorem~I.11]{KKMS}, \cite[Section~2.6]{Fulton_toric}, \cite[Proposition~(9.8)]{Kato-toric}}]
Let $X$ be a fine, saturated Kato fan.  Then there is a proper subdivision $X' \rightarrow X$ such that $X'$ is smooth.
\end{theorem}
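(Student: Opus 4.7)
The plan is to adapt the classical resolution of toric singularities to the Kato fan setting. Via Section~\ref{Sec:fan-to-kato-fan}, each affine open $U = \Spec M$ of $X$ is encoded by the rational polyhedral cone $\sigma_M = \Hom_{\Mon}(M, \RR_{\geq 0})$ inside $(N_M)_\RR$, where $N_M = \Hom(M^{gp}, \ZZ)$; smoothness in the sense of Definition~\ref{def:smoothness} translates to $\sigma_M$ being generated by part of a $\ZZ$-basis of $N_M$. So the task is to produce a subdivision whose every cone is simplicial and unimodular with respect to its ambient lattice.

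I would proceed in two subdivision phases, each built from iterated star subdivisions (Example~\ref{ex:star}). \emph{Phase 1 (simplicialization):} For each non-simplicial cone of $X$, perform a star subdivision along its barycenter, processing cones in decreasing order of dimension. Each such step replaces a non-simplicial cone with cones of strictly lower dimension incident to the new ray, so this phase terminates on every quasi-compact piece of $X$ and produces a fan all of whose cones are simplicial.

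\emph{Phase 2 (multiplicity reduction):} For a simplicial cone $\sigma$ with primitive ray generators $v_1, \ldots, v_r$, set $\mult(\sigma) = [N_\sigma : \ZZ v_1 + \cdots + \ZZ v_r]$ where $N_\sigma = (\Span_\RR \sigma) \cap N_M$; smoothness is precisely $\mult(\sigma) = 1$. When $\mult(\sigma) > 1$, the quotient $N_\sigma / (\ZZ v_1 + \cdots + \ZZ v_r)$ is non-trivial, so a pigeonhole argument on the half-open fundamental parallelepiped produces a lattice point $v = \sum a_i v_i$ with $0 \leq a_i < 1$ not all zero. A direct calculation shows that the simplicial cones obtained by star-subdividing along $v$ each have strictly smaller multiplicity, and iterating drives the maximum multiplicity down to one.

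The main obstacle is globalizing these operations coherently, particularly when $X$ is not quasi-compact. This is handled by the intrinsically global nature of star subdivision in Example~\ref{ex:star}: subdivision at a ray $v : \Spec \NN \to X$ acts simultaneously on every Kato subcone containing $v$ and restricts compatibly to open subfans. Processing the operations in a canonical order (by dimension in Phase 1, then lexicographically by dimension and multiplicity in Phase 2), we obtain a sequence of subdivisions $\cdots \to X_2 \to X_1 \to X$ which stabilizes over every quasi-compact open and whose limit is the desired smooth fan $X'$. Properness in the sense of Definition~\ref{def:proper-subdivision} is preserved at each step since star subdivisions are quasi-compact and induce bijections on $\Hom(\Spec \NN, -)$, and these properties pass to the limit.
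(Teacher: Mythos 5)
Your two-phase plan (simplicialize by star/barycentric subdivision, then drop multiplicity by star subdividing at lattice points of the fundamental parallelepiped, globalizing via compatibility of star subdivision with restriction to subfans) is exactly the classical strategy the paper summarizes after the theorem statement, so the approach matches. One detail in Phase~1 is imprecise: star subdivision of a non-simplicial cone $\sigma$ at its barycenter $v$ also produces cones $v + V$ of the \emph{same} dimension as $\sigma$ (whenever $V$ is a facet of $\sigma$), so the claim that the step yields only ``cones of strictly lower dimension'' is not what makes the phase terminate. The correct reason is the one implicit in Example~\ref{ex:star}(ii): after barycentric subdivision (subdividing at barycenters of all cones, in decreasing dimension), every resulting cone is spanned by the barycenters of a chain of faces, hence simplicial. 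If you insist on subdividing only the non-simplicial cones of the original fan, you should verify that the non-simplicial cones produced along the way (e.g.\ $v + V$ with $V$ a non-simplicial facet) are themselves handled by later steps in the chosen order; this holds, but it needs the chain-of-faces description rather than a naive dimension count.
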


The classical combinatorial proofs start by first using star or barycentric  subdivisions to make the fan simplicial, and then repeatedly reducing the index by further star subdivisions.

%\begin{proof}
%The first step is to perform barycentric subdivision on all of the open subcones of $X$.  This ensures that all cones of $X$ are simplicial. 
%
%For a simplicial cone $U$, let $\mult(U)$ be the index of the lattice generated by the $1$-dimensional faces of $U$ in the lattice of $U$.\Jonathan{will need to define some of these things}  Smoothness of $U$ is equivalent to $\mult(U) = 1$.  If $\mult(U) > 1$ then there is a $v = \sum t_i v_i$, where $v_i$ are the generators of the $1$-dimensional faces of $U$ and $0 \leq t_i < 1$.  The star subdivision of $U$ along $v$ has one maximal cone $U_i$ for each index $i$ such that $t_i \not= 0$.  Moreover, 
%\begin{equation*}
%\mult(U_i) = \Bigl[ \sum \ZZ v_j : \sum_{j \not= i} \ZZ v_j + \ZZ v \Bigr] \mult(U) = t_i \mult(U) .
%\end{equation*}
%Therefore after finitely many star subidivisions, we arrive at a Kato fan whose faces all have multiplicity $1$.
%\end{proof}

\subsection{Logarithmic regularity and associated Kato fans}\label{Sec:associated-Kato}

In this section we will work only with logarithmic structures admitting charts \emph{Zariski-locally}.

Let $X$ be a logarithmic scheme and $x$ a schematic point of $X$.  Let $I(x,M_X)$ be the ideal of the local ring $\mathcal{O}_{X,x}$ generated by the maximal prime ideal of $M_{X,x}$.

\begin{definition}[{\cite[Definition~(2.1)]{Kato-toric}}]
A locally noetherian logarithmic scheme $X$ admitting Zariski-local charts is called \emph{logarithmically regular} at a schematic point $x$ if it satisfies the following two conditions:
\begin{enumerate}[(i)]
\item the local ring $\mathcal{O}_{X,x} / I(x,M_X)$ is regular, and
\item $\dim \mathcal{O}_{X,x} = \dim \mathcal{O}_{X,x} / I(x,M) + \rank \onM^{\rm gp}_{X,x}$.
\end{enumerate}
\end{definition}

\begin{example}[{\cite[Example~(2.2)]{Kato-toric}}]
A toric variety with its toric logarithmic structure is logarithmically regular.
\end{example}

If $X$ is a logarithmic scheme then the Zariski topological space of $X$ is equipped with a sheaf of sharp monoids $\onM_X$.  Thus $(X, \onM_X)$ is a sharply monoidal space.  Moreover, morphisms of logarithmic schemes induce morphisms of sharply monoidal spaces.  We therefore obtain a functor from the category of logarithmic schemes to the category of sharply monoidal spaces.  We may speak in particular about morphisms from schemes to Kato fans.

\begin{theorem}[cf.\ {\cite[(10.2)]{Kato-toric}}] \label{thm:kato-univ-prop}
Let $X$ be a fine, saturated locally noetherian, logarithmically regular logarithmic scheme that admits charts \emph{Zariski locally}.  Then there is an initial strict morphism  $X\to F_X$ to a Kato fan.
\end{theorem}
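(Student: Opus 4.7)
The plan is to construct $F_X$ as a subspace of $X$ consisting of the ``generic points of logarithmic strata,'' equip it with the restriction of $\overline M_X$, and verify that this yields a Kato fan together with a strict retraction $\pi \colon X \to F_X$ satisfying the universal property. Concretely, define
\[
F_X = \bigl\{ x \in X \,:\, \mathcal O_{X,x}/I(x, M_X) \text{ is a field}\bigr\},
\]
which by logarithmic regularity (ii) is the same as $\{ x : \dim \mathcal O_{X,x} = \rank \overline M_{X,x}^{gp}\}$. Equip $F_X$ with the subspace topology and the sheaf of sharp monoids $\mathcal M_{F_X} := \overline M_X|_{F_X}$.

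Next, I would show that $F_X$ is a Kato fan. Fix $x \in F_X$ and, using the Zariski-local chart hypothesis, choose an open neighborhood $U$ and a chart $P \to M_U$ with $P = \overline M_{X,x}$. The induced morphism $U \to Y := \Spec(P \to \ZZ[P])$ is strict, and the structure theorem for logarithmically regular schemes (which uses condition (i)) says that $U$ is \'etale (indeed Zariski in our case) locally of the form $\Spec A \otimes \ZZ[P]$ for some regular $A$. From this explicit form one reads off a bijection between $F_X \cap U$ and the prime ideals of $P$, compatible with topology and with the sheaves of sharp monoids, so $F_X \cap U \simeq \Spec P$ as a Kato cone. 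Covering $F_X$ by such opens exhibits it as a Kato fan.

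Now I would construct the map $\pi$ by sending $x \in X$ to the generic point of the minimal logarithmic stratum containing $x$. In the charts above, $\pi$ corresponds to the continuous map $U \to \Spec P$ sending a point to the prime ideal of $P$ generated by the non-invertible sections of $\overline M_X$ at that point; this makes continuity and well-definedness transparent. The induced morphism of sheaves $\pi^{-1} \mathcal M_{F_X} \to \overline M_X$ is the identity on stalks (both equal $\overline M_{X, \pi(x)} = \overline M_{X,x}$), so $\pi$ is a strict morphism of sharply monoidal spaces.

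Finally, for initiality, suppose $f \colon X \to G$ is any strict morphism to a Kato fan. Strictness forces $f^{-1} \mathcal M_G \cong \overline M_X$, so the composition $f$ is locally constant on each logarithmic stratum of $X$ (the stalks of $f^{-1} \mathcal M_G$ are constant along a stratum, and the map is determined by the stalks). Hence $f$ factors uniquely through $\pi$ on the level of continuous maps, and the induced $g \colon F_X \to G$ inherits strictness from $f$ and $\pi$. The main obstacle is the second step: extracting, from the abstract hypotheses of logarithmic regularity and the existence of a chart, the concrete local product decomposition that identifies $F_X \cap U$ with $\Spec P$ as a sharply monoidal space. This is where one truly uses both conditions (i) and (ii) of logarithmic regularity together with saturation of $P$, and it is essentially the content of Kato's structure theorem for logarithmically regular schemes (cf.\ \cite[\S 3]{Kato-toric}), which I would invoke to conclude.
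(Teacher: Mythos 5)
Your proposal follows the same skeleton as the paper's proof: define $F_X$ as the locus where $I(x,M_X)$ is maximal (equivalently where $\mathcal{O}_{X,x}/I(x,M_X)$ is a field, or where $\dim\mathcal{O}_{X,x}=\rank\onM^{gp}_{X,x}$); restrict $\onM_X$ to this locus; cite Kato's structure theory to see that this is a Kato fan; define the retraction $\pi$ by sending $x$ to the generic point of its stratum; and verify initiality. The first three steps match the paper, which also defers the fan-structure assertion to Kato (his Proposition~(10.1)).

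The gap is in the universal property. You prove initiality only among \emph{strict} morphisms to Kato fans, whereas the paper's argument (its concluding lemma, ``The Kato fan of $X$ is the initial Kato fan admitting a morphism from $X$'') proves initiality among \emph{all} morphisms of sharply monoidal spaces $\varphi: X \to F$. Your step ``strictness forces $f^{-1}\mathcal{M}_G \cong \onM_X$'' is where the argument loses generality: for a general morphism one only has a sharp homomorphism $f^{-1}\mathcal{M}_G \to \onM_X$, which need not be an isomorphism. The paper handles the general case with a localization argument: working locally near $x$, one has $Q = \mathcal{M}_{F,\varphi(x)} \to P = \onM_{X,x}$; one restricts $F$ to the open subcone determined by the kernel of $Q \to P$, so that after restriction $Q \to P$ is sharp; then $\varphi(x)$ is forced to be the closed point of that subcone, and since $\onM_{X,\pi(x)} = \onM_{X,x}$ the same reasoning pins down $\varphi(\pi(x))$ to the same point. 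This is what replaces your unjustified assertion that ``the map is determined by the stalks.'' That assertion needs exactly this argument to be true: two points of $X$ with the same characteristic stalk must land on the same point of the Kato fan, because within an affine cone the closed point is the unique point with the full monoid as stalk and the sharp-morphism condition forces the image to be that closed point after suitable restriction. Without this step your initiality argument is incomplete even for strict $f$, and it is insufficient as written for the general morphisms the theorem's universal property is meant to cover.
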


\noindent
We call the Kato fan $F_X$ {\em the Kato fan associated to $X$}.

Kato constructs the Kato fan $F_X$ as a skeleton of the logarithmic strata of $X$.  Let $\undernorm{F}_X \subset X$ be the set of points $x \in X$ such that $I(x,M_X)$ coincides with the maximal ideal of $\mathcal{O}_{X,x}$.  As we indicate below these are the generic points of the logarithmic strata of $X$.  Let $M_{F_X}$ be the restriction of $\onM_X$ to $\undernorm{F}_X$. We denote the resulting monoidal space by $F_X$.

\begin{lemma}[{\cite[Proposition~(10.1)]{Kato-toric}}]
If $X$ is a fine, saturated, locally noetherian, logarithmically regular scheme admitting a chart Zariski locally then the sharply monoidal space constructed above is a fine, saturated Kato fan.
\end{lemma}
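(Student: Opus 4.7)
The plan is to work locally on $X$ and show that every point of $\undernorm{F}_X$ has an open neighborhood in $\undernorm{F}_X$ which, equipped with $\onM_X$, is isomorphic to $\Spec P$ for some fine, saturated, sharp monoid $P$. Fix $x \in \undernorm{F}_X$. Using the Zariski analogue of the sharpened-chart lemma quoted before Theorem~\ref{KatoStrThm}, I would shrink $X$ to a Zariski-open neighborhood $U$ of $x$ admitting a chart $P \to M_U$ with $P = \onM_{X,x}$, a fine saturated sharp monoid, and such that $P \to \onM_{X,x}$ is the identity. This produces a strict morphism $\pi : U \to \Spec(P \to \ZZ[P])$, and the restriction of $\onM_U$ to any $y \in U$ is $P/(P \setminus \mathfrak{p}_y)$ where $\mathfrak{p}_y \subset P$ is the preimage of the maximal ideal of $\onM_{X,y}$.

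Next, I would invoke Kato's structure theorem for logarithmic regularity (\cite[Theorem~3.2]{Kato-toric}), which says that at each point $y \in U$ the completed local ring $\widehat{\mathcal{O}}_{U,y}$ is a formal power series ring in a regular sequence $t_1,\dots,t_r$ over a localization of $\ZZ[P]$ determined by $\mathfrak{p}_y$, with $r = \dim \mathcal{O}_{U,y} - \rank \onM_{U,y}^{gp}$. The ideal $I(y,M_U)$ is generated by the images of $P \setminus P^\ast$, so the condition $I(y,M_U) = \mathfrak{m}_y$ forces $r = 0$, identifying $y$ as the generic point of the preimage $\pi^{-1}(V(\mathfrak{p}_y))$; conversely every such generic point lies in $\undernorm{F}_U$. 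Consequently the map $\undernorm{F}_U \to \Spec P$, $y \mapsto \mathfrak{p}_y$, is a bijection of finite sets that sends specializations to inclusions of primes, hence is a homeomorphism. Since strictness of the chart identifies the stalk of $\onM_U$ at $y$ with $P/(P\setminus \mathfrak{p}_y)$, which is the stalk of $\mathcal{M}_{\Spec P}$ at $\mathfrak{p}_y$, the sharpened monoidal spaces agree. Thus $\undernorm{F}_U \cong \Spec P$ as sharply monoidal spaces, with $P$ fine and saturated.

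Assembling these local identifications over a Zariski cover of $\undernorm{F}_X$ shows that $(\undernorm{F}_X, M_{F_X})$ is a fine, saturated Kato fan. The main obstacle is the second step: establishing, for each prime $\mathfrak{p} \subset P$, the existence and uniqueness of a single point $y \in \undernorm{F}_U$ with $\mathfrak{p}_y = \mathfrak{p}$. Existence comes from the nonemptiness and irreducibility of the pullback of the closed toric stratum cut out by $\mathfrak{p}$ (guaranteed by the formal smoothness in the direction of $t_1,\dots,t_r$), while uniqueness is precisely where the dimension equality in condition (ii) of logarithmic regularity is essential, since it forbids additional generic points with $I(y,M_U) = \mathfrak{m}_y$ on the same stratum.
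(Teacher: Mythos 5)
Your overall plan---take a sharp chart $P=\onM_{X,x}$ on a small Zariski neighborhood $U$, use Kato's local structure theorem for logarithmic regularity, and match the points of $\undernorm{F}_U$ with the primes of $P$---is the natural one and is essentially the approach in Kato's \cite[Proposition~(10.1)]{Kato-toric} (the paper itself cites Kato without reproducing a proof). However, the argument as written has a genuine gap at its central step, and a couple of smaller inaccuracies.

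The gap is in establishing that $y\mapsto\mathfrak{p}_y$ is a bijection $\undernorm{F}_U\to\Spec P$. You assert that nonemptiness and irreducibility of $\pi^{-1}\bigl(V(\mathfrak{p})\bigr)$ are ``guaranteed by the formal smoothness in the direction of $t_1,\dots,t_r$,'' but the structure theorem is a statement about the completed local ring $\widehat{\mathcal{O}}_{U,y}$ at a single point; it gives formal, hence Zariski-local, irreducibility of the stratum through that point, not global irreducibility of $\pi^{-1}\bigl(V(\mathfrak{p})\bigr)\cap U$. (Moreover, for $y\in\undernorm{F}_U$ one has $r=0$, so there is no ``$t$-direction'' to invoke.) In general, $\pi^{-1}\bigl(V(\mathfrak{p})\bigr)\cap U$ may well be reducible, so the map $\undernorm{F}_U\to\Spec P$ need not be injective for the $U$ produced by the chart lemma. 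The fix is to localize correctly: show that in $\Spec\mathcal{O}_{U,x}$ the closed subscheme $V(\alpha(\mathfrak{p}))$ is irreducible for each face $F=P\setminus\mathfrak{p}$ of $P$ (using that its completion is $\Spec$ of a quotient of $C[[F]]$, which is a domain), deduce that the set of generizations of $x$ lying in $\undernorm{F}_X$ is in canonical bijection with $\Spec P$, and then check that this set is an \emph{open} subset of $\undernorm{F}_X$. That last openness is where the finiteness of $\undernorm{F}_X$ near $x$ enters: on a finite subset the subspace topology is the Alexandrov topology of specialization, and the generizations of a point form an open subset. You never verify openness or finiteness.

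Two smaller points. First, the claim ``$I(y,M_U)$ is generated by the images of $P\setminus P^\ast$'' is incorrect: $I(y,M_U)$ is generated by $\alpha(\mathfrak{p}_y)$, i.e.\ by the images of those elements of $P$ that are not units at $y$ (this does not affect the conclusion $r=0\iff y\in\undernorm{F}_U$, but the statement as written is wrong). Second, ``sends specializations to inclusions of primes, hence is a homeomorphism'' is incomplete: a bijection of finite topological spaces that preserves specializations in one direction is not automatically a homeomorphism; you must also check that the inverse preserves specializations (which does hold here, but needs to be said, using that a prime $\mathfrak{p}\subset\mathfrak{q}$ in $P$ forces the corresponding stratum closure relation in $U$).
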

\begin{lemma}[{\cite[(10.2)]{Kato-toric}}]
There is a canonical continuous retraction of $X$ onto its Kato fan $F_X$.
\end{lemma}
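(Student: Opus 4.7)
The plan is to build the retraction Zariski-locally via charts and glue the pieces canonically. Near a geometric point $\bar x \in X$, the preceding lemma provides an open neighborhood $U$ with a Zariski chart $\alpha \colon P \to M_X\rest{U}$ where $P = \onM_{X,\bar x}$ and the composition $P \to \onM_{X,\bar x}$ is the identity. Composing the induced strict morphism $U \to \Spec(P \to \ZZ[P])$ with the tautological morphism of sharply monoidal spaces $\Spec \ZZ[P] \to \Spec P$ (sending a ring-theoretic prime $\mathfrak{q}$ to the monoid-theoretic prime $P \cap \mathfrak{q}$) produces a continuous map
\[ \pi_U \colon U \longrightarrow \Spec P, \qquad x \longmapsto \alpha^{-1}(\mathfrak{m}_x). \]

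The key claim is that $\pi_U$ restricts to a homeomorphism $F_X \cap U \xrightarrow{\sim} \Spec P$. For each prime $\mathfrak{p} \subset P$ the fiber $V_\mathfrak{p} := \pi_U^{-1}(\mathfrak{p})$ is the logarithmic stratum where $\alpha(\mathfrak{p}) \subset \mathfrak{m}$ and $\alpha(P \setminus \mathfrak{p}) \subset \cO_U^{\ast}$. Logarithmic regularity forces each local ring $\cO_{X,x}/I(x,M_X) = \cO_{X,x}/\alpha(\mathfrak{p}) \cO_{X,x}$ to be regular for $x \in V_\mathfrak{p}$; hence $V_\mathfrak{p}$ is a regular subscheme of $U$. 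After shrinking $U$ so that each $V_\mathfrak{p}$ is connected, it is integral with a unique generic point $x_\mathfrak{p}$ satisfying $I(x_\mathfrak{p}, M_X) = \mathfrak{m}_{x_\mathfrak{p}}$, placing $x_\mathfrak{p}$ in $F_X \cap U$. Conversely any $x \in F_X \cap U$ is the generic point of its own stratum, so $\pi_U\rest{F_X \cap U}$ is a bijection onto $\Spec P$. It is a homeomorphism because each basic open $D(f) \subset \Spec P$ pulls back to $F_X \cap \{x \in U : \alpha(f) \in \cO_{X,x}^{\ast}\}$, the intersection of $F_X$ with a Zariski-open of $U$.

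With $\iota_U := (\pi_U\rest{F_X \cap U})^{-1}$ in hand, set $r_U := \iota_U \circ \pi_U \colon U \to F_X \cap U$: this is continuous as a composition of continuous maps, and restricts to the identity on $F_X \cap U$ by construction. Since $r_U(x)$ is intrinsically the generic point of the logarithmic stratum through $x$, it is independent of the chart, and the local retractions agree on overlaps to glue to a canonical continuous retraction $r \colon X \to F_X$. The main obstacle is the identification $F_X \cap U \cong \Spec P$ — in particular producing a unique generic point in $F_X$ for each stratum — which is precisely what logarithmic regularity secures by forcing each stratum to be regular of the correct codimension in $U$, hence integral on a small enough neighborhood.
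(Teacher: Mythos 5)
Your retraction agrees with the paper's: both send $x$ to the generic point of its logarithmic stratum, which is well defined because $\cO_{X,x}/I(x,M_X)$ is regular, hence a domain with a unique minimal prime. The paper phrases this definition intrinsically (no chart) and then explicitly omits the verification that the map is continuous and a retraction; you instead work through a chart $P \to M_X\rest U$, which is a legitimate route and in fact closer to what Kato does.

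However, your continuity argument has a gap at the crucial step. You claim that $\pi_U\rest{F_X\cap U}\colon F_X\cap U \to \Spec P$ is a homeomorphism, but what you actually verify is only that it is \emph{continuous}: the preimage of each $D(f)$ is the trace of a Zariski open. Since $F_X\cap U$ carries the subspace topology from $U$, a continuous bijection onto $\Spec P$ need not be a homeomorphism, and it is exactly the openness (equivalently the continuity of $\iota_U$) that your construction $r_U=\iota_U\circ\pi_U$ requires. Concretely, openness amounts to the assertion that if $\mathfrak{p}\supset\mathfrak{q}$ in $\Spec P$ then $x_{\mathfrak p}$ lies in the closure of $x_{\mathfrak q}$ in $U$ --- that is, the closure of a logarithmic stratum $V_{\mathfrak q}$ contains all the deeper strata $V_{\mathfrak p}$. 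This closure relation is nontrivial; it is the genuine content that logarithmic regularity secures (e.g.\ that $V(\alpha(\mathfrak q)\cO_U)$ is irreducible near $x_{\mathfrak p}$, so $V_{\mathfrak q}$ is dense in it), and it is precisely what the paper waves past when it writes ``we omit the rest of the proof.'' Without this step your ``composition of continuous maps'' claim does not go through. A smaller point: the lemma you cite supplies an \'etale neighborhood, whereas in this section the paper assumes Zariski-local charts; you should invoke that assumption directly rather than the \'etale statement.
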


We briefly summarize the definition of the map and omit the rest of the proof. Let $x$ be a point of $X$.  The quotient $\cO_{X,x} / I(x,M_X)$ is regular, hence in particular a domain, so it has a unique minimal prime corresponding to a point $y$ of $F_X\subset X$.  We define $\pi(x) = y$.

\begin{lemma}
The Kato fan of $X$ is the initial Kato fan admitting a morphism from $X$.
\end{lemma}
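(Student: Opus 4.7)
The plan is to prove initiality by showing that every morphism of sharply monoidal spaces $g : X \to G$, with $G$ a Kato fan, factors uniquely through the canonical retraction $\pi : X \to F_X$ from the previous lemma. Since $F_X$ sits inside $X$ as a subspace with $M_{F_X} = \onM_X|_{F_X}$, the inclusion $\iota : F_X \hookrightarrow X$ is a section of $\pi$. Any factorization $\tilde g$ must therefore satisfy $\tilde g = \tilde g \circ \pi \circ \iota = g \circ \iota$, which forces uniqueness and singles out the candidate $\tilde g := g \circ \iota$. The content is thus to verify $g = \tilde g \circ \pi$, and the crux is a topological statement.

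The key step is to show that for each $x \in X$, we have $g(x) = g(\pi(x))$. Let $y = \pi(x)$, so that $y$ is the generic point of the logarithmic stratum containing $x$; in particular $y \rightsquigarrow x$, and the cospecialization $\onM_{X,x} \to \onM_{X,y}$ is a sharp isomorphism (the characteristic is locally constant on strata, which is effectively the strictness of $\pi$). By continuity, $g(y) \rightsquigarrow g(x)$, and any affine Kato subcone $V = \Spec N \subset G$ containing $g(x)$ must also contain $g(y)$, since every open neighborhood of $x$ contains $y$. The restricted morphism $g|_{g^{-1}(V)} : g^{-1}(V) \to \Spec N$ corresponds, by the universal property of affine Kato cones (the monoidal-space analogue of the lemma following Definition~\ref{Def:chart}), to a sharp monoid homomorphism $h : N \to \Gamma(g^{-1}(V), \onM_X)$. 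Under this identification, $g(x)$ and $g(y)$ are the prime ideals $\mathfrak{p}_x = \{n : h_x(n) \neq 0\}$ and $\mathfrak{p}_y = \{n : h_y(n) \neq 0\}$, where $h_x$ and $h_y$ are the stalk compositions. Since global sections commute with cospecialization, $h_y$ is the cospecialization of $h_x$; a sharp isomorphism preserves and reflects the zero element, so $\mathfrak{p}_x = \mathfrak{p}_y$ and $g(x) = g(y)$.

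Assembling the morphism on characteristics is then a formality. Strictness of $\pi$ from Theorem~\ref{thm:kato-univ-prop} gives $\onM_X = \pi^{-1} M_{F_X}$, and since $\pi \circ \iota = \id$, the map $g^\flat : g^{-1} M_G \to \onM_X$ coincides with $\pi^{-1}$ applied to $\iota^{-1} g^\flat : \tilde g^{-1} M_G \to M_{F_X}$; that last map is precisely $\tilde g^\flat$, confirming $g = \tilde g \circ \pi$ as morphisms of monoidal spaces. The main obstacle is the topological step: without combining the sharp local nature of $g^\flat$ on stalks with the description of points of a Kato fan as prime ideals of their affine subcones, one cannot rule out the possibility that $g$ separates points of a single logarithmic stratum of $X$.
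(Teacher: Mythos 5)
Your proposal is correct and follows essentially the same strategy as the paper: reduce to showing $g(x) = g(\pi(x))$, pass to an affine Kato cone $\Spec N$ containing both images, and exploit the fact that the cospecialization $\onM_{X,x} \to \onM_{X,\pi(x)}$ is an isomorphism together with the sharpness of stalk maps to conclude the two prime ideals coincide. The paper phrases the key step by shrinking $F$ until $Q \to \onM_{X,x}$ is sharp (forcing $\varphi(x)$ to be the closed point), whereas you describe the image point directly as the prime $\{n : h_x(n) \neq 0\}$; these are the same observation in different words, and your explicit uniqueness argument via the section $\iota$ is a nice small addition that the paper leaves implicit.
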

\begin{proof}
Let $\varphi : X \rightarrow F$ be another morphism to a Kato fan.  By composition with the inclusion $F_X \subset X$ we get a map $F_X \rightarrow F$.  We need to show that $\varphi$ factors through $\pi : X \rightarrow F_X$. 

Let $x$ be a point of $X$ and let $A$ be the local ring of $x$ in $X$.  Let $P = \onM_{X,x}$.  Let $y = \pi(x)$.  Then $y$ is the generic point of the vanishing locus of $I(x,M_X)$.  We would like to show $\varphi(y) = \varphi(x)$.  We can replace $X$ with $\Spec A$, replace $F_X$ with $F_X \cap \Spec A$, and replace $F$ with an open Kato cone in $F$ containing $\varphi(x)$.  Then $F = \Spec Q$ for some fine, saturated, sharp monoid $Q$ and we get a map $Q \rightarrow P$.  Since $X \rightarrow F$ is a morphism of \emph{sharply} monoidal spaces, $X \rightarrow F$ factors through the open subset defined by the kernel of $Q \rightarrow P$, so we can replace $F$ by this open subset and assume that $Q \rightarrow P$ is sharp.  But then $X \rightarrow F$ factors through no smaller open subset, so $\varphi(x)$ is the closed point of $F$.  Moreover, we have $\onM_{X,y} = \onM_{X,x}$ so the same reasoning applies to $y$ and shows that $\varphi(y) = \varphi(x)$, as desired.
\end{proof}

This completes the proof of Theorem~\ref{thm:kato-univ-prop}.

\subsection{Towards the monoidal analogues of algebraic spaces}

\subsubsection{The need for a more general approach: the nodal cubic}
\label{sec:cone-space-motivation}

Not every logarithmically smooth scheme has a Kato fan.  For example, the divisorial logarithmic structure on $\AA^2$ associated to a nodal cubic curve does not have a chart in any Zariski neighborhood of the node of the cubic.  The Kato fan of this logarithmic structure wants to be the Kato fan of the plane with the open subsets corresponding to the complements of the axes glued together.

\begin{figure}[htb]
\begin{center}

\begin{tikzpicture}

%\path [pattern=north west lines, pattern color=black!20!white] (-3,1) -- (1,0) .. controls +(160:0.4cm) and +(-90:0.6cm) .. (-0.5,1) -- (-3,1);
\path [pattern=north west lines, pattern color=black!20!white] (-3,1) -- (0,0) .. controls +(160:0.1cm) and +(-90:0.1cm) .. (-0.2,1) -- (-3,1);
%\path [pattern=north east lines, pattern color=black!20!white] (-3,1) -- (-0.5,1) .. controls +(90:0.6cm) and +(200:0.4cm) .. (1,2) -- (-3,1); 
\path [pattern=north east lines, pattern color=black!20!white] (-3,1) -- (-0.2,1) .. controls +(90:0.1cm) and +(200:0.1cm) .. (0,2) -- (-3,1);

\draw (-3,1) -- (0,0);
%\draw (-3,1) -- (0,2);
%\draw[dotted] (-3,1) -- (-1,0.33);
%\draw[dotted]  (-3,1) -- (-1,1.66);

%\fill (-0.5,1) circle (0.12cm);
\fill (-3,1) circle (0.030cm);
%\fill (0.5,1) circle (0.05cm);

%\draw (-3,1) -- (-0.5,1);
%\draw [dotted] (-1.5,1) -- (-0.5,1);

%\draw [dotted, pattern=dots, pattern color=black!10!white] (1,0) .. controls +(20:0.4cm) and +(-90:0.6cm) .. (2.5,1) .. controls +(90:0.6cm) and +(-20:0.4cm) .. (1,2);
%\draw [dashed, pattern=dots, pattern color=black!10!white](1,0) .. controls +(160:0.cm) and +(-90:0.6cm) .. (-0.5,1) .. controls +(90:0.6cm) and +(200:0.4cm) .. (1,2);
\draw [dotted, pattern=dots, pattern color=black!10!white] (0,0) .. controls  +(-90:0.0cm) .. (0.2,1) .. controls +(90:0.1cm) and +(-20:0.1cm) .. (0,2);
\draw [dashed, pattern=dots, pattern color=black!10!white](0,0) .. controls  +(-90:0.0cm) .. (-0.2,1) .. controls +(90:0.1cm) and +(200:0.1cm) .. (0,2);
%\draw [dotted] (0.5,1) .. controls +(90:0.2cm) and +(-20:0.3cm) .. (0,2);
%\draw [dashed](-0.5,1) .. controls +(90:0.2cm) and +(200:0.3cm) .. (0,2);

\end{tikzpicture}
\caption{The fan of the nodal cubic drawn as a cone with the two edges glued to each other.}\label{Fig:Selfintersection}
\end{center}
\end{figure}

This cannot be a Kato fan, because there is no open neighborhood of the closed point that is a Kato cone.  Indeed, the \ChDan{closed} point has  two different generizations to the codimension~$1$ point. 

Clearly, the solution here is to allow more general types of gluing (colimits) into the definition of a Kato fan. The purpose of this section is to outline what this might entail. Our favorite solution will only come in the next section, where we discuss Artin fans.

 The theory of algebraic spaces provides a blueprint for how to proceed.  The universal way to add colimits to a category is to pass to its category of presheaves.  In order to retain the Kato fans as colimits of their open Kato subcones, we look instead at the category of sheaves.  Finally, we restrict attention to those sheaves that resemble Kato cones \emph{\'etale locally}.

\subsubsection{The need for a more general approach: the Whitney umbrella}\label{sec:cone-stack-motivation}
These \emph{cone spaces} are general enough to include a fan for the divisorial logarithmic structure of the nodal cubic.  However, one cannot obtain a fan for the punctured Whitney umbrella this way:

Working over $\CC$,  let $X = \AA^2 \times \Gm$ with the logarithmic structure pulled back from the standard logarithmic structure on $\AA^2$\ChDan{, associated to the divisor $xy=0$ as in Example \ref{Ex:divisorial-log-structure}}.  Let $G = \ZZ / 2 \ZZ$ \ChDan{act by} $t . (x,y,z) = (y,x,-z)$.  \ChDan{Since the divisor is $G$ stable,} the action lifts to the logarithmic structure, so the logarithmic structure descends to the quotient $Y = X / G$.

There is a projection $\pi : Y \rightarrow \Gm$, and traversing the nontrivial loop in $\Gm$ acts by the automorphism of $\AA^2$ exchanging the axes.  Thus the logarithmic structure of $Y$ has \emph{monodromy}.

If there is a map from $Y$ to a cone space $Z$, then it is not possible for $\onM_Y$ to be pulled back from $M_Z$.  Just as in a Kato fan, the strata of $Z$ on which $M_Z$ is locally constant are discrete and therefore cannot have monodromy.

However, \emph{Deligne--Mumford stacks} can have monodromy at points.  By enlarging our perspective to include \emph{cone stacks}, the analogues of Deligne--Mumford stacks for Kato fans, we are able to construct fans that record the combinatorics of the logarithmic strata for any locally connected logarithmic scheme (logarithmic smoothness is not required).  The construction proceeds circuitously, by showing cone stacks are equivalent to Artin fans (defined in Section~\ref{sec:artin}) and then constructing an Artin fan associated to any logarithmically smooth scheme.

\begin{figure}[htb]
\begin{center}

\begin{tikzpicture}

\draw [white, pattern=north west lines, pattern color=black!20!white] (0,0) -- (2,0) -- (2,2) -- (0,0);

%\fill (2,2) circle (0.05 cm);
%\fill (0,0) circle (0.20 cm);
%\fill (2,0) circle (0.12 cm);
\fill (0,0) circle (0.050cm);

\draw [dashed] (0,0) -- (2,2);
%\draw [dotted] (2,1) -- (2,2);
%\draw [dotted] (1,0) -- (2,0);
\draw (0,0) -- (2,0);
%\draw (2,0) -- (2,1);

%\node at (3,1) {$\Sigmabar_{(X,D)}$};

\end{tikzpicture}

\caption{The fan of the Whitney umbrella drawn as a cone\ChDan{---the first quadrant---folded over} itself via the involution \ChDan{$(x,y)\mapsto (y,x)$}.}\label{Fig:Monodromy}
\end{center}
\end{figure}

\section{Artin fans}
\label{sec:artin}
In order to simplify the discussion we work over an algebraically closed field $\mathbf{k}$.

\subsection{Definition and basic properties}

\begin{definition} \label{def:artin-fan}
An \emph{Artin fan} is a logarithmic algebraic stack that is logarithmically \'etale over $\Spec \mathbf{k}$.  A morphism of Artin fans is a morphism of logarithmic algebraic stacks.  We denote the $2$-category of Artin fans by the symbol $\AF$.
\end{definition}

Olsson showed that there is an algebraic stack $\underline \Log$ over the category of schemes such that morphisms $\uS \rightarrow \underline\Log$ correspond to logarithmic structures $S$ on $\uS$ \cite[Theorem~1.1]{Olsson_log}.%
\footnote{Our conventions differ from Olsson's:  In \cite{Olsson_log}, the stack $\mathcal{L}\!\mathit{og}$ parameterizes all fine logarithmic structures; to conform with our convention that all logarithmic structures are fine and saturated, we use the symbol $\Log$ to refer to the open substack of Olsson's stack parameterizing fine and saturated logarithmic structures.  This was denoted $\mathcal{T}\!\!\mathit{or}$ in \cite{Olsson_log}.}
Equipping $\underline{\Log}$ with its universal logarithmic structure yields the logarithmic algebraic stack $\Log$.  If $\mathcal{X}$ is an Artin fan then the morphism $\undercal{X} \rightarrow \underline\Log$ induced by the logarithmic structure of $\mathcal{X}$ is \'etale, and conversely:  any logarithmic algebraic stack whose structural morphism to $\underline \Log$ is \'etale is an Artin fan.

\begin{example} \label{ex:artin-cones}
Let $V$ be a toric variety with dense torus $T$.  The toric logarithmic structure of $V$ is $T$-equivariant, hence descends to a logarithmic structure on $[V/T]$, making $[V/T]$ into an Artin fan.

If $V = \Spec \mathbf{k}[M]$ for some fine, saturated, sharp monoid $M$ then $[V/T]$ represents the following functor on logarithmic schemes~\cite[Proposition~5.17]{Olsson_log}:
\begin{equation*}
(X, M_X) \mapsto \Hom\bigl(M, \Gamma(X, \onM_X)\bigr) .
\end{equation*}
As $\onM_X$ is an \'etale sheaf over $X$, it is immediate that $[V/T]$ is logarithmically \'etale over a point.
\end{example}

\begin{definition} \label{def:artin-cones}
An \emph{Artin cone} is an Artin fan isomorphic to $\mathcal{A}_M = [V/T]$, where $V = \Spec \mathbf{k}[M]$ and $M$ is a fine, saturated, sharp monoid.
\end{definition}

\begin{figure}[htb]
\begin{center}

%\begin{tikzpicture}[shorten >=-5pt, decoration=snake,>={Stealth[angle'=72]}]
%\fill (1,1) circle (0.20 cm)
   %   (3,1) circle (0.12 cm)
  %    (1,3) circle (0.12 cm)
 %     (3,3) circle (0.05 cm);
%\ChDan{
%\draw [->,decorate] (1.5,1) -- (2.5,1);
%\draw [->,decorate] (1.4,3) -- (2.6,3);
%\draw [->,decorate] (1,1.5) -- (1,2.5);
%\draw [->,decorate] (3,1.4) -- (3,2.6);
%}

%\node at (0.5,0.5) {$\{\ast\}$};
%\node at (3.5,0.5) {$B\Gm$};
%\node at (0.5,3.5) {$B\Gm$};
%\node at (3.5,3.5) {$B\Gm^2$};
%\end{tikzpicture}
\includegraphics[scale=0.6]{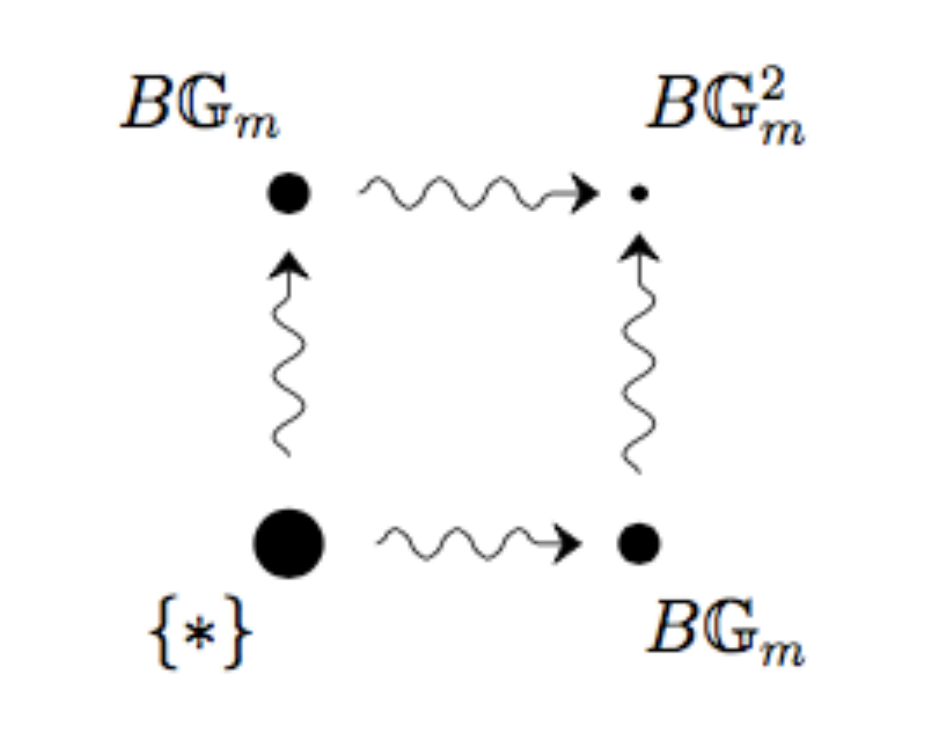}
\caption{$\cA_{\NN^2}.$ The small closed point is $B\Gm^2$, the intermediate points are  $B\Gm$ and the big point is just a point.}\label{Fig:A2}
\end{center}
\end{figure}

If $\mathcal{X} \rightarrow \Log$ is \'etale then $\mathcal{X}$ is determined by its \'etale stack of sections over $\Log$.  Moreover, any \'etale stack on $\Log$ corresponds to an Artin fan by passage to the espace (champ) \'etal\'e.  The following lemma characterizes the \'etale site of $\Log$ as a category of presheaves:

\begin{lemma} \label{lem:etale-site-of-Log}
\begin{enumerate}[(i)]
\item The Artin cones are an \'etale cover of $\Log$.
\item \ChJonathan{An Artin cone has no nontrivial \'etale covers.} 
\item If $M$ and $N$ are fine, saturated, sharp monoids then 
\begin{equation*}
\Hom_{\AF}(\mathcal{A}_M, \mathcal{A}_N) = \Hom_{\mathbf{Mon}}(N,M).
\end{equation*}
\end{enumerate}
\end{lemma}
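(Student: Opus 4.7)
The plan is to address (iii), (i), (ii) in that order; throughout, write $\mathcal{A}_M = [V/T]$ with $V = \Spec \mathbf{k}[M]$ and $T = \Spec \mathbf{k}[M^{gp}]$.

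For (iii), the approach is to invoke the universal property stated in Example \ref{ex:artin-cones}: for a log scheme $X$, $\Hom(X, \mathcal{A}_N) = \Hom_{\Mon}(N, \Gamma(X, \onM_X))$. Since $V \to \mathcal{A}_M$ is a $T$-torsor and hence a smooth atlas, the corresponding statement for the log stack $\mathcal{A}_M$ follows by descent: $\Hom(\mathcal{A}_M, \mathcal{A}_N)$ equals the $T$-invariants in $\Hom(V, \mathcal{A}_N) = \Hom_{\Mon}(N, \Gamma(V, \onM_V))$. Next, compute $\Gamma(V, \onM_V) = M$ directly: the stalk of $\onM_V$ at the unique $T$-fixed closed point of $V$ (unique because $M$ is sharp) is $M$, every element of $M$ extends via the tautological chart to a global section, and these are all of them. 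Finally, observe that $T$ acts trivially on $\onM_V$, since it scales each $\chi^m \in M_V$ by a unit of $\mathbf{k}$ which is killed modulo units; hence $T$-invariance is automatic and $\Hom_{\AF}(\mathcal{A}_M, \mathcal{A}_N) = \Hom_{\Mon}(N, M)$.

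For (i), verify the two conditions defining an étale cover. That each $\mathcal{A}_M \to \Log$ is étale follows directly from the characterization of Artin fans as precisely those logarithmic stacks whose structure morphism to $\Log$ is étale, recalled in the paragraph preceding Example \ref{ex:artin-cones}. For joint surjectivity, a geometric point of $\Log$ is an fs logarithmic structure on the algebraically closed field $\mathbf{k}$, and is determined up to isomorphism by its characteristic, a fine saturated sharp monoid $M$; it is the image of the closed point $BT \in \mathcal{A}_M$, whose characteristic is $M$. More generally, for any $S \to \Log$ and any geometric point $\bar s \in S$, the lemma following Example \ref{ex:affine-toric-log} (a special case of \cite[Proposition~2.1]{Olsson_log}) supplies an étale neighborhood $U \to S$ and a characteristic chart $\onM_{S,\bar s} \to M_U$, which by the universal property of Example \ref{ex:artin-cones} factors $U \to S \to \Log$ through $\mathcal{A}_{\onM_{S,\bar s}}$, showing that the Artin cones form a covering family of $\Log$.

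For (ii), consider an étale surjection $\mathcal{Y} \to \mathcal{A}_M$ and pull back along $V \to \mathcal{A}_M$ to obtain a $T$-equivariant étale surjection $V' \to V$. Sharpness of $M$ ensures that $V$ has a unique closed $T$-fixed point $v$, corresponding to the maximal ideal $M \smallsetminus \{0\}$ of $\mathbf{k}[M]$. The fiber of $V' \to V$ over $v$ is a nonempty finite reduced $\mathbf{k}$-scheme on which the connected group $T$ acts trivially, so each preimage of $v$ is itself $T$-fixed; pick one, $v'$. Étaleness at $v'$ provides a section of $V' \to V$ on a Zariski-open neighborhood of $v$, and $T$-fixity of $v'$ makes this section $T$-equivariant. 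Since every $T$-orbit in $V$ has $v$ in its closure (again by sharpness of $M$), this $T$-equivariant local section extends uniquely to all of $V$, and descends to a section $\mathcal{A}_M \to \mathcal{Y}$. The main obstacle is formalizing this final extension step---writing precisely why a $T$-equivariant section near the attractor point of $V$ spreads to the whole toric variety---but the geometric content is transparent once one draws the cone picture of a sharp monoid.
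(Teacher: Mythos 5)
Your treatments of (i) and (iii) are correct and follow the same line as the paper: (iii) reduces to $\Gamma(\cA_M,\onM_{\cA_M})=M$ together with the functor of points of $\cA_N$ from Example~\ref{ex:artin-cones}, and (i) is exactly Olsson's result that charts exist \'etale-locally. No concerns there.

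Part (ii) is where you diverge from the paper, and your argument has a genuine gap. The paper (citing \cite[Prop.~2.4.1, Cor.~2.4.3]{AW}) restricts the $T$-equivariant \'etale cover $V'\to V$ to the dense torus $T\subset V$, where the action is simply transitive: a $T$-equivariant \'etale cover of $T$ is forced to be a disjoint union of copies of $T$, so a section exists over $T$ and then one argues it extends to $V$. You instead work at the unique closed fixed point $v$, but the step ``\'etaleness at $v'$ provides a section of $V'\to V$ on a Zariski-open neighborhood of $v$'' is false in general: an \'etale morphism with a rational point in the fiber is a local isomorphism in the \emph{\'etale} topology, not the Zariski topology (e.g.\ the squaring map $\Gm\to\Gm$ has no Zariski-local sections although every fiber is a pair of rational points). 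Likewise, ``$T$-fixity of $v'$ makes this section $T$-equivariant'' does not follow — you would have to \emph{construct} an equivariant section, not merely observe that one exists near a fixed point. To repair your route, consider the connected component $Z\subset V'$ containing $v'$. Since $M$ is saturated, $V$ is normal, hence $V'$ and $Z$ are normal; $Z\to V$ is \'etale, its image is a $T$-invariant open containing $v$ and so is all of $V$; restricting to $T$ and using simple transitivity shows $Z\to V$ is birational; and then normality plus Zariski's main theorem gives that $Z\to V$ is an isomorphism, yielding the desired $T$-equivariant section. You correctly noted that once a $T$-equivariant section exists on a $T$-invariant neighborhood of $v$ it extends to all of $V$ (the only such neighborhood is $V$ itself), but identified that as the ``main obstacle'' when in fact it is the easy part; the missing content is the existence of the section, which requires normality and ZMT (or the paper's detour through the open torus), not just \'etaleness at a fixed point.
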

\begin{proof}
Statement (i) was proved in \cite[Corollary~5.25]{Olsson_log}.  It follows from the fact that every logarithmic scheme admits a chart \'etale locally.

Statement (ii)  follows from  \cite[Corollary 2.4.3]{AW}.  Concretely,   \'etale covers of Artin cones correspond to equivariant \'etale covers of toric varieties, which restrict to equivariant \'etale covers of tori, of which there are none other than the trivial ones \cite[Proposition~2.4.1]{AW}.

Statement (iii) follows from $\Gamma(\mathcal{A}_M, \onM_{\mathcal{A}_M}) = M$ and consideration of the functor represented by $\mathcal{A}_N$ (Example~\ref{ex:artin-cones}).
\end{proof}

\subsection{Categorical context} Lemma~\ref{lem:etale-site-of-Log}~(iii) enables us to relate the 2-category of Artin fans to the notions surrounding  Kato fans.  Let us write $\RPC$ for the category of rational polyhedral cones.  The category $\RPC$ is equivalent to the opposite of the category of fine, saturated, sharp monoids.%
\footnote{In fact, the category of fine, saturated, sharp monoids is equivalent to its own opposite.  However, we find it helpful to maintain a distinction between cones (which we view as spaces) and monoids (which we view as functions).}
Therefore $\RPC$ is equivalent to the category of Kato cones and, by Lemma~\ref{lem:etale-site-of-Log}~(iii), to the category of Artin cones.  Furthermore, we obtain

\begin{corollary}
The $2$-category $\AF$ of Artin fans is fully faithfully embedded in the $2$-category of fibered categories over $\RPC$.
\end{corollary}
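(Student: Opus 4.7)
The embedding sends an Artin fan $\cX$ to the pseudofunctor $\sigma\mapsto\Hom_{\AF}(\cA_\sigma,\cX)$ on $\RPC^{\mathrm{op}}$, where $\cA_\sigma$ is the Artin cone associated to $\sigma$ under the equivalence of Lemma~\ref{lem:etale-site-of-Log}~(iii). To prove fully faithfulness I would show that for any two Artin fans $\cX$ and $\cY$, restriction to Artin cones gives an equivalence between the hom-category $\Hom_{\AF}(\cX,\cY)$ and the category of cartesian functors between the associated fibered categories over $\RPC$.

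The plan is to compose two fully faithful embeddings. First, by Definition~\ref{def:artin-fan} and Olsson's result, the $2$-category $\AF$ is equivalent to the $2$-category of \'etale stacks in groupoids on the small \'etale site $\uLog_\et$: an Artin fan is exactly a logarithmic stack whose underlying morphism to $\uLog$ is \'etale, and by the espace-\'etal\'e correspondence this is the same data as an \'etale stack on $\uLog_\et$. Morphisms of Artin fans are then morphisms of \'etale stacks on $\uLog_\et$. Second, I would restrict from $\uLog_\et$ to the full subcategory $\cC\subset\uLog_\et$ consisting of Artin cones. By Lemma~\ref{lem:etale-site-of-Log}~(i), $\cC$ is a covering family for $\uLog_\et$, so the restriction functor is faithful. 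By Lemma~\ref{lem:etale-site-of-Log}~(ii), an Artin cone has no nontrivial \'etale covers, so the Grothendieck topology inherited by $\cC$ is trivial and a stack on $\cC$ is just a fibered category. These two facts together make the restriction functor fully faithful onto its essential image. Finally, Lemma~\ref{lem:etale-site-of-Log}~(iii) identifies $\cC$ with $\RPC$, yielding the desired fully faithful embedding of $\AF$ into fibered categories over $\RPC$.

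The main obstacle is that a morphism of \'etale stacks on $\uLog_\et$ a priori requires a descent cocycle on fiber products $\cA_M\times_{\uLog}\cA_N$, which need not themselves be Artin cones and so could harbor data invisible on $\cC$. However, such a fiber product is \'etale over $\uLog$ and hence admits an \'etale cover by Artin cones by Lemma~\ref{lem:etale-site-of-Log}~(i); applying Lemma~\ref{lem:etale-site-of-Log}~(ii) to those covering cones reduces the cocycle to functoriality along morphisms in $\cC$, i.e., morphisms in $\RPC^{\mathrm{op}}$. Once this reduction is made precise---and seen to respect $2$-isomorphisms, giving the $2$-categorical fully faithfulness rather than just its $1$-categorical shadow---the corollary follows.
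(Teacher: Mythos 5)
Your overall strategy---restricted Yoneda into fibered categories over the subcategory of Artin cones, using all three parts of Lemma~\ref{lem:etale-site-of-Log}, then identifying Artin cones with $\RPC$ via part~(iii)---is the one the paper's (very terse) proof invokes. But your first step conceals a genuine gap.

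You claim that $\AF$ is equivalent, as a $2$-category, to the $2$-category of \'etale stacks on $\uLog_\et$. This identification is correct on \emph{objects}: an Artin fan is precisely the espace \'etal\'e of an \'etale stack on $\underline{\Log}$. It fails on \emph{morphisms}. A morphism of \'etale stacks on $\uLog_\et$ corresponds to a morphism of stacks over $\underline{\Log}$, and a morphism of logarithmic stacks $f\colon\cX\to\cY$ commutes with the classifying maps $\cX\to\Log$, $\cY\to\Log$ exactly when $f^\flat$ is an isomorphism---that is, when $f$ is \emph{strict}. But Definition~\ref{def:artin-fan} takes morphisms of $\AF$ to be \emph{arbitrary} morphisms of logarithmic algebraic stacks, and Lemma~\ref{lem:etale-site-of-Log}~(iii) confirms this: $\Hom_{\AF}(\cA_M,\cA_N)=\Hom_{\Mon}(N,M)$ includes non-strict maps, such as the endomorphism of $\cA_{\NN}$ induced by the doubling map $\NN\to\NN$, $n\mapsto 2n$. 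Such a map is not a morphism over $\underline{\Log}$ (it changes the classifying map, since the pulled-back characteristic $\NN\to\NN$ is the non-surjective doubling), so it is invisible to your ``\'etale stacks on $\uLog_\et$'' picture. The same conflation recurs when you identify the full subcategory $\cC\subset\uLog_\et$ of Artin cones (whose morphisms are strict) with $\RPC$ (which, per Lemma~(iii) and the footnote identifying $\RPC$ with the opposite of fs sharp monoids, carries \emph{all} monoid homomorphisms). As written, your composite only establishes full faithfulness for strict morphisms, which is weaker than the corollary.

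The ingredients you deploy in your ``main obstacle'' paragraph---cover a fiber product by Artin cones via~(i), then kill redundancy via~(ii)---are exactly what is needed, but aimed at the wrong target. What should be shown is that Artin cones are dense in $\AF$ with respect to \emph{all} of its morphisms: every morphism $g\colon\cA_M\to\cX$ into an Artin fan factors, by pulling back a strict \'etale cover of $\cX$ by Artin cones (Lemma~(i)) and invoking the absence of nontrivial \'etale covers of $\cA_M$ (Lemma~(ii)), as $g = i\circ h$ with $i\colon\cA_{M'}\to\cX$ strict \'etale and $h\colon\cA_M\to\cA_{M'}$ a morphism of Artin cones, i.e.\ a morphism in $\RPC$. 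Once this factorization (and its essential uniqueness) is in place, the presheaf $\sigma\mapsto\Hom_{\AF}(\cA_\sigma,\cX)$ on $\RPC$ is the canonical colimit presentation of $\cX$, and full faithfulness follows by Yoneda. In short: replace the detour through stacks over $\underline{\Log}$ by a direct density argument that works with the genuine (non-strict) morphisms of $\AF$.
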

\begin{proof}
Lemma~\ref{lem:etale-site-of-Log} gives an embedding of $\AF$  in the the $2$-category of fibered categories 
on the category of Artin cones and identifies the category of Artin cones with $\mathbf{RPC}$.
\end{proof}

This enables us to relate the 2-category $\AF$ with the framework proposed in Sections \ref{sec:cone-space-motivation} and
\ref{sec:cone-stack-motivation}:  the 2-category of cone stacks suggested in Section \ref{sec:cone-stack-motivation} is necessarily equivalent to the 2-category $\AF$. The key to proving this is the fact that the diagonal of an Artin fan is represented by algebraic spaces, which enables one to relate it to a morphism of ``cone spaces" as suggested in Section \ref{sec:cone-space-motivation}. 

In particular, we have a fully faithful embedding $\KF \to \AF$ of the category of Kato fans in the 2-category of Artin fans.

\subsection{The Artin fan of a logarithmic scheme}\label{sec:artinfanofascheme}

\begin{definition}\label{definition_smalllogsch}
A Zariski logarithmic scheme $X$ is said to be {\em small}
with respect to a point $x\in X$,
if the restriction morphism $\Gamma(X,\Mbar)\rightarrow \Mbar_{X,x}$ is an isomorphism and the closed logarithmic stratum
\begin{equation*}
\big\{y\in X\big\vert\Mbar_{X,y}\simeq\Mbar_{X,x}\big\}
\end{equation*}
is {\em connected}. We say $X$ is {\em small} if it is small with respect to some point.
\end{definition}

Let $N = \Gamma(X, \onM_X)$.  There is a canonical morphism
\begin{equation*}
X \rightarrow \mathcal{A}_N
\end{equation*}
corresponding to the morphism $N \rightarrow \Gamma(X, \onM_X)$.  It is shown in \cite{ACMW} that {\em if $X$ is small} this morphism is initial among all morphisms from $X$ to Artin fans.  We therefore call $\mathcal{A}_N$ the \emph{Artin fan} of such small $X$.  By the construction, the Artin fan of $X$ is functorial with respect to strict morphisms.

Now consider a logarithmic algebraic stack $X$ with a groupoid presentation
\begin{equation*}
V \rightrightarrows U \rightarrow X 
\end{equation*}
in which $U$ and $V$ are disjoint unions of {\em small} Zariski logarithmic schemes.  Then $V$ and $U$ have Artin fans $\mathcal{V}$ and $\mathcal{U}$ and we obtain strict morphisms of Artin fans $\mathcal{V} \rightarrow \mathcal{U}$.  Strict morphisms of Artin fans are \'etale, so this is a diagram of \'etale spaces over $\Log$.  It therefore has a colimit, also an \'etale space over $\Log$, which we call the Artin fan of $X$.

\subsection{Functoriality of Artin fans: problem and fix}

The universal property of the Artin fan implies immediately that Artin fans are functorial with respect to \emph{strict} morphisms of logarithmic schemes.  They are not functorial in general, but we will be able to salvage a weak replacement for functoriality in which morphisms of logarithmic schemes induce correspondences of Artin fans.

\subsubsection{The failure of functoriality}

We use the notation for the punctured Whitney umbrella introduced in Section~\ref{sec:cone-stack-motivation}.  As $X$ has a global chart, its Artin fan $\mathcal{X}$ is easily seen to be $\mathcal{A}^2$.  The Artin fan $\mathcal{Y}$ of $Y$ is the quotient of $\mathcal{A}^2$ by the action of $\ZZ / 2 \ZZ$ exchanging the components, \emph{as a representable \'etale space over $\Log$}.  In other words, the group action induces an \'etale equivalence relative to $\Log$ by taking the image of the action map
\begin{equation*}
\ZZ / 2 \ZZ \times \mathcal{A}^2 \rightarrow \mathcal{A}^2 \mathop{\times}_{\Log} \mathcal{A}^2 .
\end{equation*}
A logarithmic morphism from a logarithmic scheme $S$ into $\mathcal{A}^2 \mathop{\times}_{\Log} \mathcal{A}^2$ consists of two maps $\mathbf{N}^2 \rightarrow \Gamma(S, \onM_S)$ and an isomorphism between the induced logarithmic structures $M_1$ and $M_2$ commuting with the projection to $M_S$.  This implies that
\begin{equation*}
\mathcal{A}^2 \mathop{\times}_{\Log} \mathcal{A}^2 = \mathcal{A}^2 \mathop{\amalg}_{\mathcal{A}^0} \mathcal{A}^2
\end{equation*}
where by $\mathcal{A}^0$ we mean the open point of $\mathcal{A}^2$.  The nontrivial projection \ChJonathan{in}
\begin{equation*}
\mathcal{A}^2 \mathop{\amalg}_{\mathcal{A}^0} \mathcal{A}^2 = \mathcal{A}^2 \mathop{\times}_{\Log} \mathcal{A}^2 \rightrightarrows \mathcal{A}^2
\end{equation*}
is given by the identity map on one component and the exchange of coordinates on the other component.  (The trivial projection is the identity on both components.)

It is now easy to see that \ChJonathan{$\ZZ / 2 \ZZ \times \mathcal{A}^2$} surjects onto $\mathcal{A}^2 \mathop{\amalg}_{\mathcal{A}^0} \mathcal{A}^2$, so the Artin fan of $Y$ is the image of $\mathcal{A}^2$ in $\Log$.  We will write $\mathcal{A}^{[2]}$ for this open substack and observe that it represents the functor sending a logarithmic scheme $S$ to the category of pairs $(\onM, \varphi)$ where $\onM$ is an \'etale sheaf of monoids on $S$ and $\varphi : \onM \rightarrow \onM_S$ is a strict morphism \emph{that can be presented \'etale locally by a map $\mathbf{N}^2 \rightarrow \onM_S$}.  Equivalently, it is a logarithmic structure over $M_S$ that has \'etale-local charts by $\mathbf{N}^2$.

Of course, there is a map $\mathcal{X} \rightarrow \mathcal{Y}$ consistent with the maps $X \rightarrow Y$:  it is the canonical projection $\mathcal{A}^2 \rightarrow \mathcal{A}^{[2]}$.  However, we take $\widetilde{X}$ to be the logarithmic blowup of $X$ at $\{ 0 \} \times \Gm$ and let $\widetilde{Y}$ be the logarithmic blowup of $Y$ at the image of this locus.  Then we have a cartesian diagram, since $X$ is flat over $Y$:
\begin{equation*} \xymatrix{
\widetilde{X} \ar[r] \ar[d] & \widetilde{Y} \ar[d] \\
X \ar[r] & Y
} \end{equation*}
We will compute the Artin fans of $\widetilde{X}$ and $\widetilde{Y}$.

Since $X$ and $Y$ are flat over their Artin fans, the blowups $\widetilde{X}$ and $\widetilde{Y}$, are pulled back from the blowups $\widetilde{\mathcal{X}}$ and $\widetilde{\mathcal{Y}}$ of $\mathcal{X}$ and $\mathcal{Y}$.  Furthermore, the square in the diagram below is cartesian:
\begin{equation*} \xymatrix{
\widetilde{\mathcal{X}} \ar[r] \ar[d] & \widetilde{\mathcal{Y}} \ar[r] \ar[d] & \mathcal{Z} \ar@{-->}[dl] \\
\mathcal{X} \ar[r] & \mathcal{Y}
} \end{equation*}
We have written $\mathcal{Z}$ for the Artin fan of $\tY$.  Since $\tY \rightarrow \widetilde{\mathcal{Y}}$ is strict and smooth with connected fibers, the map $\tY \rightarrow \mathcal{Z}$ factors through $\widetilde{\mathcal{Y}}$.  We will see in a moment that there is no dashed arrow making the triangle on the right above commutative, thus witnessing the failure of the functoriality of the Artin fan construction with respect to the morphism $\tY \rightarrow Y$.%
\footnote{More specifically, there is no way to make the Artin fan functorial while also making the maps $Y \rightarrow \mathcal{Y}$ natural in $Y$.}

The reason no map $\mathcal{Z} \rightarrow \mathcal{Y}$ can exist making the diagram above commutative is that $\widetilde{\mathcal{Y}}$ has monodromy at the generic point of its exceptional divisor, pulled back from the monodromy at the closed point of $\mathcal{Y}$.  However, the image of the exceptional divisor of $\widetilde{\mathcal{Y}}$ in $\mathcal{Z}$ is a divisor, and $\mathcal{Z} \rightarrow \Log$ is representable by algebraic spaces.  No rank~$1$ logarithmic structure can have monodromy, so there is no monodromy at the image of the exceptional divisor in $\mathcal{Z}$.

A variant of the last paragraph shows that there is no commutative diagram
\begin{equation*} \xymatrix{
\tY \ar[r] \ar[d] & \mathcal{Z} \ar[d] \\
Y \ar[r] & \mathcal{Y} .
} \end{equation*}
We can find a loop $\gamma$ in the exceptional divisor \ChDan{$E$} of $\tY$ that projects to a nontrivial loop in $Y$, around which the logarithmic structure of $Y$ has nontrivial monodromy.  Even though the logarithmic structure of $\tY$ has no monodromy around $\gamma$, the image of $\gamma$ in $\mathcal{Y}$ is nontrivial.  But all of \ChDan{the exceptional divisor $E$} is collapsed to a point in $\mathcal{Z}$, so the image of $\gamma$ in $\mathcal{Y}$ must act trivially.%
\footnote{Here is a rigorous version of the above argument.  Observe that $\mathcal{Y}$ is the quotient of $\mathcal{R} \rightrightarrows \mathcal{X}$, where $\mathcal{X} \simeq \mathcal{A}^2$ and $\mathcal{R}$ is two copies of $\mathcal{X}$ joined along their open point.  Pulling $\mathcal{X}$ back to the center of the blowup $Y_0 \simeq \Gm$ inside $Y$ yields the cover by $\Gm \times \{ 0 \} \subset X$.  This cover has nontrivial monodromy.  On the other hand, $\mathcal{Z} \simeq \mathcal{A}^2$ has no nontrivial \'etale covers.  Therefore the pullback of $\mathcal{X} \mathop{\times}_{\tsY} \mathcal{Z}$ via $\tY \rightarrow \mathcal{Z}$ yields a trivial \'etale cover of $Y_0$.}

\subsubsection{The patch}
\label{sec:patch}

There seem to be two ways to get around this failure of functoriality.  The first is to allow the Artin fan to include more information about the fundamental group of the original logarithmic scheme $X$.  However, the most naive application of this principle would introduce the entire \'etale homotopy type of $X$ into the Artin fan, sacrificing Artin fans' essentially combinatorial nature.

Another approach draws inspiration from Olsson's stacks of diagrams of logarithmic structures~\cite{Olsson-log-cc}.  Let $\Log^{[1]}$ be the stack whose $S$-points are morphisms of logarithmic structures $M_1 \rightarrow M_2$ on $S$.  A morphism of logarithmic schemes $X \rightarrow Y$ induces a commutative diagram
\begin{equation*} \xymatrix{
X \ar[r] \ar[d] & \Log^{[1]} \ar[d] \\
Y \ar[r] & \Log
} \end{equation*}
where $\Log^{[1]} \rightarrow \Log$ sends $M_1 \rightarrow M_2$ to $M_1$.  If $\Log^{[1]}$ is given $M_2$ as its logarithmic structure, this is a commutative diagram of logarithmic algebraic stacks.  The construction of the Artin fan works in a relative situation, and we take $\mathcal{Y} = \pi_0(Y / \Log)$ and $\mathcal{X} = \pi_0(X / \Log^{[1]})$.  Note that $\mathcal{Y} \mathop{\times}_{\Log} \Log^{[1]}$ is \'etale over $\Log^{[1]}$, so we get a map 
\begin{equation*}
\mathcal{X} \rightarrow \mathcal{Y} \mathop{\times}_{\Log} \Log^{[1]} \rightarrow \mathcal{Y}
\end{equation*}
salvaging a commutative diagram:

\begin{theorem}[\protect{\cite[Corollary 3.3.5]{AW}}]\label{thm:artinfunctoriality}
For any morphism of logarithmic schemes $X \rightarrow Y$ with \'etale-locally connected logarithmic strata there is an initial commutative diagram
\begin{equation*} \xymatrix{
X \ar[r] \ar[d] & \mathcal{X} \ar[d] \ar[r] & \Log^{[1]} \ar[d] \\
Y \ar[r] & \mathcal{Y} \ar[r] & \Log
} \end{equation*}
in which the horizontal arrows are strict and both $\mathcal{X}$ and $\mathcal{Y}$ are Artin fans representable by algebraic spaces relative to $\Log^{[1]}$ and $\Log$, respectively.
\end{theorem}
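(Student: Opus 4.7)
The plan is to build both $\mathcal{X}$ and $\mathcal{Y}$ by a single relative construction $\pi_0(Z/\mathcal{L})$, applied once with $Z=Y$ over $\mathcal{L}=\Log$ and once with $Z=X$ over $\mathcal{L}=\Log^{[1]}$, and then to upgrade the initiality enjoyed separately by each factor into the joint initiality of the whole diagram. The key input is that strict étale morphisms of logarithmic algebraic stacks to $\Log$ (resp.\ $\Log^{[1]}$) are the same as étale morphisms of the underlying stacks, since the logarithmic structures of $\Log$ and $\Log^{[1]}$ are universal. Consequently the existence of the initial factorization $Z \to \pi_0(Z/\mathcal{L}) \to \mathcal{L}$ through a strict horizontal arrow is the same as the existence of an initial factorization of the underlying map $\underline Z \to \underline{\mathcal{L}}$ through an étale representable-by-algebraic-spaces morphism. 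Both targets will then automatically be Artin fans.

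For the construction of $\pi_0(Z/\mathcal{L})$, I would work étale-locally on $Z$. Choose a small étale neighborhood $U \to Z$, in the sense of Definition \ref{definition_smalllogsch} applied relative to $\mathcal{L}$: shrink until the closed logarithmic stratum is connected and the sections of the relevant sheaf of sharp fs monoids extend globally. For $\mathcal{L}=\Log$ this produces the standard chart $U \to \mathcal{A}_N$ with $N=\Gamma(U,\overline M_U)$; for $\mathcal{L}=\Log^{[1]}$ this produces a chart by a sharp morphism of sharp fs monoids $N_Y \to N_X$, which cuts out an open substack $\mathcal{A}_{N_X \leftarrow N_Y}\subset \Log^{[1]}$ representable by an algebraic space and étale over $\Log^{[1]}$. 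The hypothesis that the logarithmic strata are étale-locally connected is exactly what lets us glue these local Artin fans: on overlaps, strict étale morphisms between Artin cones correspond to monoid maps by Lemma~\ref{lem:etale-site-of-Log}(iii), and the connectedness assumption prevents any monodromy in the descent data that would obstruct the étale space structure. The resulting colimit, taken in the $2$-category of étale spaces over $\mathcal{L}$, is the desired relative Artin fan.

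Setting $\mathcal{Y}=\pi_0(Y/\Log)$ and $\mathcal{X}=\pi_0(X/\Log^{[1]})$, the right column of the diagram comes from the structural morphism $\Log^{[1]}\to\Log$ that forgets $M_2$. To produce the middle vertical arrow $\mathcal{X}\to\mathcal{Y}$, consider the composite
\begin{equation*}
X \longrightarrow Y \longrightarrow \mathcal{Y},
\end{equation*}
whose target is étale over $\Log$. Its base change $\mathcal{Y}\mathop{\times}_{\Log}\Log^{[1]}$ is an étale representable-by-algebraic-spaces stack over $\Log^{[1]}$ through which $X\to\Log^{[1]}$ factors strictly. By the universal property of $\mathcal{X}=\pi_0(X/\Log^{[1]})$, this factorization descends to a unique strict morphism $\mathcal{X}\to\mathcal{Y}\mathop{\times}_{\Log}\Log^{[1]}$, and post-composing with projection yields $\mathcal{X}\to\mathcal{Y}$, giving a commutative diagram as claimed. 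Initiality of the whole square then reduces to initiality in each column separately, applied to a hypothetical competing diagram, because the compatibility data for a competitor consists precisely of a strict factorization of $Y$ over $\Log$ and of $X$ over $\Log^{[1]}$ together with a vertical arrow, which the two universal properties make unique.

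The hardest step will be the gluing underlying the construction of $\pi_0(Z/\mathcal{L})$ and the verification that the result is representable by algebraic spaces relative to the base, rather than merely by Deligne--Mumford stacks. The problem exhibited before the statement---monodromy obstructing naive functoriality---is defeated here precisely because we are working over $\Log^{[1]}$ instead of over $\Log$: remembering the pair $(f^*M_Y\to M_X)$ rigidifies exactly enough of the local structure so that the étale descent datum for $\mathcal{X}$ becomes trivial, much as tracking a rank-one logarithmic structure forbids monodromy in the Whitney umbrella example. Verifying this rigidification in families, so that the gluing yields an étale algebraic space over $\Log^{[1]}$ rather than an étale Deligne--Mumford stack, is the main technical content and is where the étale-local connectedness of logarithmic strata is used decisively.
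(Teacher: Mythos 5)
Your proposal follows essentially the same route as the paper's outline: you define $\mathcal{Y}=\pi_0(Y/\Log)$ and $\mathcal{X}=\pi_0(X/\Log^{[1]})$ by the relative Artin-fan construction, obtain the middle vertical arrow via $\mathcal{X}\to\mathcal{Y}\times_{\Log}\Log^{[1]}\to\mathcal{Y}$ (exactly the paper's construction), and check initiality column-by-column using the base change by $\Log^{[1]}\to\Log$. The argument is correct; the only small imprecision is the suggestion that connectedness of strata is needed to prevent monodromy from obstructing the gluing --- in fact the colimit is taken in the $2$-category of \'etale stacks over $\Log$ (resp.\ $\Log^{[1]}$), where monodromy is perfectly admissible; the connectedness hypothesis is what makes the relative $\pi_0$ well-defined and compatible with restriction, not what forces representability.
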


\section{Algebraic applications of Artin fans}
\label{sec:obs}

\subsection{Gromov--Witten theory and relative Gromov--Witten theory} Algebraic Gromov--Witten theory is the study of the virtually enumerative invariants, known as Gromov--Witten invariants, of algebraic curves on a smooth target variety $X$. In Gromov--Witten theory one integrates cohomology classes on $X$ against the virtual fundamental class $[\ocM_\Gamma(X)]^{\rm vir}$ of the moduli space $\ocM_\Gamma(X)$ of stable maps with target $X$.  The subscript $\Gamma$ indicates fixed numerical invariants, including the genus of the domain curve, the number of marked points on it, and the homology class of its image.

{\em Relative Gromov--Witten theory} comes from efforts to define Gromov--Witten invariants for degenerations of complicated targets that, while singular, are still geometrically simple.  In the mild setting of two smooth varieties meeting along a smooth divisor, such a theory has been developed by J.\ Li \cite{Li1,Li2}, following work in symplectic geometry by A.\ M.\ Li--Ruan \cite{Li-Ruan} and Ionel--Parker \cite{Ionel-Parker, Ionel-Parker2}.  

Working over $\CC$, one considers a degeneration  
\begin{equation}\label{fiberofacceptabledegeneration}
\vcenter{\xymatrix{   Y_1 \mathbin{\sqcup}_D Y_2\ar[d]\ar@{}[r]|-*[@]{\subset} & X\ar[d]^\pi\\
b_0\ar@{}[r]|-*[@]{\subset}  & B
}}
\end{equation}
where $\pi:X\to B$ is a flat, projective morphism from a smooth variety to a smooth curve, and where  the singular fiber $X_0=Y_1\sqcup_D Y_2$ consists of two smooth varieties meeting along a smooth divisor.  

Jun Li proved an algebro-geometric degeneration formula through which one can recover Gromov--Witten invariants of the possibly complicated but smooth general fiber of  $X$ from \emph{relative Gromov--Witten invariants} determined by a space $\ocM_{\Gamma_i}(Y_i)$ of {\em relative stable maps} to each of the two smooth components $Y_i$ of $X_0$.  Here, in addition to the genus, number of markings, and homology class, one must fix the {\em contact orders} of the curve with the given divisor.

\subsubsection{Expanded degenerations and pairs} In order to define Gromov--Witten invariants of the {\em singular} degenerate fiber $X_0$, Jun Li constructed a whole family of {\em expansions} $X_0' \to X_0$, where
\[X_0' = Y_1\sqcup_D P \sqcup_D \cdots \sqcup_D P \sqcup_D Y_2.\] 
Here  $P$ is the projective completion of $N_{D/Y_1} \simeq N_{D/Y_2}^\vee$ (explicitly, it is $\PP(\cO\oplus N_{D/Y_1})$) and the gluing over $D$ attaches $0$-sections to $\infty$-sections.

Similarly,  in order to guarantee that contact orders of maps in each $Y_i$ are maintained,  Jun Li constructed a family of expansions $Y_i' \to Y_i$ where \[Y_i' = Y_i\sqcup_D P \sqcup_D \cdots \sqcup_D P.\]

Here is the first point where Artin fans, in their simplest form and even without logarithmic structures, become of use: 

In Jun Li's construction, not every deformation of an expansion $Y_i'\to Y_i$ is itself an expansion. For instance, the expansion $Y_i\sqcup_D P$ can deform to  $Y_i\sqcup_D P'$, where $P''= \PP(\cO\oplus N'')$ with $N''$ a deformation of $N_{D/Y_1}$.  Precisely the same problem occurs with deformation of an expansion $X_0' \to X_0$.

\subsubsection{The Artin fan as the universal target, and its expansions}
In \cite{ACFW}, following ideas in \cite{cadman}, it was noted that the $Y_i$ has a canonical map  $Y_i \to\cA := [\AA^1/\Gm]$. From the point of view of the present text,  $\cA = \cA_{Y_i}$, the Artin fan associated to the divisorial logarithmic structure $(Y_i,D)$, and its divisor $\cD = B\G_m\subset \cA$ is the universal divisor. Next, if $\cA'\to \cA$ is an expansion, then {\em all deformations of $\cA'\to \cA$ are expansions of $\cA$ in the sense of Jun Li.}  and finally, any expansion $Y_i'\to Y_i$ is obtained as the pullback $Y_i' = \cA'\times_{\cA} Y_i$ of some expansion $\cA'\to \cA$. 

This means that the moduli space of \ChDan{expansions} of any pair $(Y_i,D)$ is identical to the moduli space of expansions of $(\cA,\cD)$, and the \ChDan{expansions} themselves are obtained by pullback.

A similar picture occurs for degenerations: the Artin fan of $X \to B$ is the morphism $\cA \times \cA \to \cA$ induced by the multiplication morphism $\AA^2 \to \AA^1$:
\begin{equation*}\label{canonicalsquare}
\xymatrix{ X\ar[d]\ar[r] & \cA^2\ar[d]\\
B\ar[r]  & \cA.
}
\end{equation*}
 There is again a stack of universal expansions  of $\cA \times \cA \to \cA$, and every expansion $X_0' \to X_0$ is the pullback of a fiber of the universal expansion:
 \[
\xymatrix{ X_0'\ar[r]\ar[d] & (\cA^2)' \ar[d]\\
X_0\ar[r] & \cA^2\times_A B}
\]

From the point of view of logarithmic geometry this is not surprising: expansions are always stable under {\em logarithmic} deformations. But the approach through Artin fans provides us with further results, which we outline below.

\subsubsection{Redefining obstructions} Using expanded degenerations and expanded pairs, Jun Li defined moduli spaces of {\em degenerate stable maps} $\ocM_\Gamma(X/B)$ and of {\em relative stable maps} $\ocM_{\Gamma_i}(Y_i, D)$.  Jun Li had an additional challenge in defining the virtual fundamental classes of these spaces, which he constructed by bare hands. With a little bit of hindsight, we now know that Li's virtual fundamental classes are associated to the natural {\em relative obstruction theories} of the morphisms  $\ocM_\Gamma(X/B) \to  \mathfrak M_\Gamma(\cA^2/\cA)$ and  $\ocM_{\Gamma_i}(Y_i, D)\to \mathfrak M_{\Gamma_i}(\cA,\cD)$, where $\mathfrak M_\Gamma(\cA^2/\cA)$ and $\mathfrak M_{\Gamma_i}(\cA,\cD)$ are the associated moduli spaces of {\em prestable maps}. Moreover, the virtual fundamental classes can be understood with machinery available off the shelf of any deformation theory emporium.  This observation from \cite{AMW} made it possible to prove a number of comparison results, including those described below.

\subsubsection{Other approaches and comparison theorems}

 Denote by $(Y,D)$ a smooth pair, consisting of a smooth projective variety $Y$ with a smooth and irreducible divisor $D$.    Jun Li's moduli space $\ocM_\Gamma(Y,D)$ of relative stable maps to $(Y,D)$ provides an algebraic setting for relative Gromov--Witten theory.  Only recently have efforts to generalize the theory to more complicated singular targets come to fruition \cite{Parker,Parker1,GS,AC, ionel}. Because of the technical difficulty of Jun Li's approach,  several alternate approaches to the relative Gromov--Witten invariants of $(Y,D)$ have been developed:

\begin{itemize} 
\item[$\bullet$] $\JListab_\Gamma(Y,D):$ Li's original moduli space of relative stable maps;
\item[$\bullet$] $\AFstab_\Gamma(Y,D):$ Abramovich--Fantechi's stable orbifold maps with expansions~\cite{AF};
\item[$\bullet$] $\Kimstab_\Gamma(Y,D):$ Kim's  stable logarithmic maps with expansions~\cite{Kim};
\item[$\bullet$] $\ACGSstab_\Gamma(Y,D):$ Abramovich--Chen and Gross--Siebert's stable logarithmic maps without expansions~\cite{Chen,GS,AC}.
\end{itemize} 

There are analogous constructions 
\begin{equation*}
\JListab_\Gamma(X/B)\qquad \AFstab_\Gamma(X/B)\qquad \Kimstab_\Gamma(X/B)\qquad \ACGSstab_\Gamma(X/B)
\end{equation*}
for a degeneration. 

This poses a new conundrum: how do these approaches compare? The answer, which depends on the Artin fan $\cA$, is as follows:

\begin{theorem} \label{thm:compare}\cite[Theorem~1.1]{AMW}
There are maps
\[
 \xymatrix{
     \cat{AF}  \ar[dr]^{\Psi} & & \Kim \ar[dl]_{\Theta} \ar[dr]^{\Upsilon} & \\
  & \JLi &  & \ACGS .
}
\]
such that
%, for each of the maps $p : K \rightarrow L$ above, $p_\ast [K]^{\vir} = [L]^{\vir}$.  That is,
\begin{align*}
\Psi_\ast [\cat{AF}]^{\vir} &= [\JLi]^{\vir} &
\Theta_\ast [\Kim]^\vir &= [\JLi]^\vir &
\Upsilon_\ast [\Kim]^\vir &= [\ACGS]^\vir.
\end{align*}
In particular, the Gromov--Witten invariants associated to these four theories coincide.
\end{theorem}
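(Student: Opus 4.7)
The plan is to use the Artin fan as a universal target, reducing each of the four moduli problems to a fiber product over moduli of prestable maps to an Artin fan stack, with the virtual classes defined by canonical relative obstruction theories. Once this is done, each comparison becomes a matter of identifying base stacks and verifying that obstruction theories are compatible across the morphisms $\Psi$, $\Theta$, $\Upsilon$, after which Costello-style pushforward arguments produce the desired equalities.

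First I would construct the three morphisms. The map $\Upsilon : \Kim \to \ACGS$ is the \emph{expansion contraction}: given a Kim stable logarithmic map to an expansion of $(Y,D)$, one contracts the expansion back to $Y$, keeping the induced logarithmic map on stable components and recording contact orders. The map $\Theta : \Kim \to \JLi$ strips the logarithmic structure on the expanded target to leave Li's underlying relative stable map, using the contact orders to recover Li's predeformability. The map $\Psi : \cat{AF} \to \JLi$ passes from an orbifold expansion carrying a twisted stable map to the coarse expansion with its underlying Li-stable map. In each case I would verify, following \cite{AMW}, that the construction lifts to a morphism between the corresponding stacks of prestable maps built from the Artin fan $\cA = \cA_{Y}$ (or from $\cA^2 \to \cA$ in the degeneration case), so that one has commuting squares of the form
\begin{equation*}\xymatrix{
\Kim \ar[r]^{\Upsilon} \ar[d] & \ACGS \ar[d] \\
\mathfrak{M}(\cA,\cD)' \ar[r] & \mathfrak{M}(\cA,\cD)
}\end{equation*}
and similarly for $\Theta$ and $\Psi$, where the bottom arrows are morphisms of stacks of prestable maps to Artin-fan expansions.

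The second step is to check that the relative perfect obstruction theories on the sources and targets are compatible under these squares. Since each of the bottom horizontal morphisms is a morphism of Artin fans (hence logarithmically étale) or a proper birational morphism between smooth Artin stacks, one expects that the natural obstruction theories agree upon pullback, up to a controlled discrepancy that vanishes under pushforward. Applying Costello's pushforward theorem, or equivalently the virtual pullback machinery of Manolache, then yields $\Upsilon_\ast[\Kim]^\vir = [\ACGS]^\vir$, $\Theta_\ast[\Kim]^\vir = [\JLi]^\vir$, and $\Psi_\ast[\cat{AF}]^\vir = [\JLi]^\vir$. The equality of Gromov--Witten invariants follows because integration against pulled-back cohomology classes commutes with proper pushforward.

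The main obstacle will be the $\Upsilon$-comparison, because contracting an expansion genuinely changes the target of the map, and one must show that both the combinatorial data (contact orders, tropical curves) and the deformation theory behave well under this contraction. The crux is to observe that the universal expansion $(\cA,\cD)' \to (\cA,\cD)$ lives entirely in the world of Artin fans and is logarithmically étale, so that the induced map on moduli of prestable maps is representable, proper, and of virtual relative dimension zero in the appropriate sense; the virtual class pushforward then follows. For $\Psi$ the obstacle is analogous but milder, as orbifold and logarithmic expansions over $\cA$ have been identified in earlier sections, while for $\Theta$ the key is only that forgetting the logarithmic structure along a strict morphism preserves obstruction theories.
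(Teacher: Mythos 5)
Your proposal follows essentially the same route the paper sketches (and which \cite{AMW} carries out in full): fit each comparison map into a cartesian square over stacks of stable maps to the Artin fan $(\cA,\cD)$, check that the natural relative obstruction theories pull back compatibly along the vertical arrows, note that the bottom horizontal arrow is proper and birational of degree one, and conclude via Costello's pushforward theorem. The one terminological slip is your parenthetical that the bottom horizontal arrows are ``morphisms of Artin fans (hence logarithmically \'etale)''---they are in fact morphisms between moduli \emph{stacks of maps to} the Artin fan, not morphisms of Artin fans---but as you also note, the property that actually drives the argument is that these arrows are proper and birational between stacks whose virtual classes equal their fundamental classes, which is exactly what the Artin-fan formalism supplies.
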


The principle behind this comparison for each of the three maps $\Psi, \Theta$ and $\Upsilon$ is the same, and we illustrate it on $\Psi$: there are algebraic stacks \ChDan{parametrizing orbifold and relative stable maps to the Artin fan $(\cA,\cD)$,}  which we denote here by  $\cat{AF}_\Gamma(\cA,\cD)$ and $\JLi_\Gamma(\cA,\cD)$\ChDan{. These stacks sit} in a cartesian diagram 
$$\xymatrix{\AFstab_\Gamma(\ChDan{\ChJonathan{Y},D}) \ar[r]^\Psi\ar[d]_{\pi_{\cat{AF}}}& \JListab_\Gamma(Y,D) \ar[d]^{\pi_{\JLi}} \\
\cat{AF}_\Gamma(\cA,\cD) \ar[r]^{\Psi_\cA} & \JLi_\Gamma(\cA,\cD)
}
$$
such that 
\begin{enumerate}
\item The virtual fundamental classes of the spaces $\AFstab_\Gamma(\ChDan{\ChJonathan{Y},D})$ and $\JListab_\Gamma(Y,D)$ may be computed using the natural obstruction theory relative to the vertical arrows $\pi_{\cat{AF}}$ and $\pi_{\JLi}$.
\item The obstruction theory for $\pi_{\cat{AF}}$ is the pullback of the obstruction theory of $\pi_{\JLi}$.
\item The morphism $\Psi_\cA$ is birational.
\end{enumerate}

One then applies a general comparison result of Costello \cite[Theorem 5.0.1]{costello} to obtain the theorem.

\ChDan{What makes all this possible is the fact that  the virtual fundamental classes of $\cat{AF}_\Gamma(\cA,\cD)$ and $\JLi_\Gamma(\cA,\cD)$  agree with their fundamental classes. This in turn results from the fact that the Artin fan $(\cA,\cD)$ of $(X,D)$ discards all the complicated geometry of $(X,D)$, retaining  just enough algebraic structure to afford stacks of maps such as  $\cat{AF}_\Gamma(\cA,\cD)$ and $\JLi_\Gamma(\cA,\cD)$.}  \ChJonathan{In effect, the \emph{virtual} birationality of $\Psi$ is due to the \emph{genuine} birationality of $\Psi_{\cA}$.}

Similar results were obtained by similar methods in  \cite{CMW} and \cite{ACW}.

\subsection{Birational invariance for logarithmic stable maps}\label{ss:virbirationality}

More general Artin fans have found applications in the logarithmic approach to Gromov--Witten theory. 

Let $X$ be a projective {\em logarithmically smooth} variety over $\CC$, and denote by $\ocM(X)$ the moduli space of {\em logarithmic} stable maps to $X$, as defined in \cite{GS, Chen, AC}.  Fix a logarithmicaly \'etale morphism $h:Y \to X$ of projective logarithmically smooth varieties.   There is a natural morphism $\ocM(h) :\ocM(Y) \to \ocM(X)$ induced by $h$, and the central result of \cite{AW} is the following pushforward statement for virtual classes.

\begin{theorem}\label{AWmaintheorem} $\ocM(h)_* \left( [\ocM(Y)]^{\rm vir}\right) =  [\ocM(X)]^{\rm vir}.$
In particular the logarithmic Gromov--Witten invariants of $X$ and $Y$ coincide.
\end{theorem}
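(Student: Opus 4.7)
The plan is to follow the playbook established for Theorem~\ref{thm:compare}: lift the problem to Artin fans, where the relevant comparison becomes a combinatorial birationality, and then deduce the equality of virtual classes from Costello's pushforward theorem~\cite{costello}.

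\emph{Step 1 (passage to Artin fans).} Applying the construction of Section~\ref{sec:artinfanofascheme} together with the functoriality patch of Theorem~\ref{thm:artinfunctoriality} to the morphism $h$ produces a commutative square
$$\xymatrix{
Y \ar[r]^{\phi_Y} \ar[d]_h & \cA_Y \ar[d]^{h_\cA} \\
X \ar[r]^{\phi_X} & \cA_X
}$$
in which $\phi_X$ and $\phi_Y$ are strict and smooth with geometrically connected fibers, and $h_\cA$ is the induced map of Artin fans. Since $h$ is logarithmically étale and Artin fans are logarithmically étale over $\mathbf{k}$, the morphism $h_\cA$ is itself logarithmically étale; combinatorially it decomposes (Zariski locally on $\cA_X$) into subdivisions in the sense of Definition~\ref{def:proper-subdivision} followed by root-like modifications.

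\emph{Step 2 (compatibility of obstruction theories).} Taking stacks of log (pre)stable maps yields a diagram
$$\xymatrix{
\ocM(Y) \ar[r]^{\ocM(h)} \ar[d]_{\pi_Y} & \ocM(X) \ar[d]^{\pi_X} \\
\mathfrak{M}(\cA_Y) \ar[r]_{\mathfrak{M}(h_\cA)} & \mathfrak{M}(\cA_X).
}$$
Strictness of $\phi_X$ and $\phi_Y$ forces the square to be cartesian, and the log-étaleness of $h$ yields the identification $h^\ast T^{\log}_{X} \cong T^{\log}_{Y}$, so the relative perfect obstruction theory $R(\pi_Y)_\ast f_Y^\ast T^{\log}_Y$ of $\pi_Y$ is the pullback along $\ocM(h)$ of the corresponding relative obstruction theory of $\pi_X$. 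Because maps from curves to an Artin fan are logarithmically unobstructed (Section~\ref{sec:obs}), the stacks $\mathfrak{M}(\cA_X)$ and $\mathfrak{M}(\cA_Y)$ are themselves logarithmically smooth, and the virtual classes $[\ocM(Y)]^{\vir}$ and $[\ocM(X)]^{\vir}$ arise as virtual pullbacks from the respective bases.

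\emph{Step 3 (birationality on the base).} The main obstacle is proving that $\mathfrak{M}(h_\cA)$ is birational. By the universal property of Artin cones (Example~\ref{ex:artin-cones}), an $S$-point of $\mathfrak{M}(\cA_X)$ is combinatorial data on a log prestable curve $C/S$, namely a map of generalized cone complexes from the tropicalization $\Sigma_{C/S}$ to $\Sigma_{\cA_X}$. Because $h_\cA$ corresponds to a proper subdivision, every such tropical map admits a unique lift to $\Sigma_{\cA_Y}$ away from the codimension-one subdivided locus, giving an open substack on which $\mathfrak{M}(h_\cA)$ is an isomorphism; over the deeper strata one shows the fibers are connected and of the expected dimension by a direct analysis of how the subdivision splits cones. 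This establishes that $\mathfrak{M}(h_\cA)$ is proper and birational, so by Step~2 and Costello's theorem the virtual pushforward computation
$$\ocM(h)_\ast [\ocM(Y)]^{\vir} = \pi_X^{!}\, \mathfrak{M}(h_\cA)_\ast [\mathfrak{M}(\cA_Y)] = \pi_X^{!} [\mathfrak{M}(\cA_X)] = [\ocM(X)]^{\vir}$$
yields the theorem. The serious technical work, as in \cite{AMW,ACW,CMW}, is concentrated in justifying the birationality claim for $\mathfrak{M}(h_\cA)$ in the presence of automorphisms coming from monodromy of the logarithmic structures.
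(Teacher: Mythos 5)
Your proposal follows exactly the architecture the paper uses: pass to Artin fans via Theorem~\ref{thm:artinfunctoriality}, form a square of moduli spaces, check the three principles from the proof of Theorem~\ref{thm:compare}, and invoke Costello's pushforward theorem. So the overall route is the same as the paper's. However, there is a genuine gap in Step~2.

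You assert that ``strictness of $\phi_X$ and $\phi_Y$ forces the square to be cartesian,'' but strictness alone does not give cartesianity of the moduli square. The missing input is the cartesianity of the underlying square of \emph{targets}, that is, the statement that $Y \cong X \times_{\mathcal{A}_X} \mathcal{A}_Y$. This is precisely diagram~\eqref{diag:lift-map} in the paper, which the paper flags as cartesian; it is a substantive assertion established in \cite{AW}, not a formal consequence of the universal property of Artin fans or of Theorem~\ref{thm:artinfunctoriality} (which only produces an initial \emph{commutative} square). Once one has the cartesian target square, strictness of the vertical arrows does give cartesianity at the level of stable log maps, and your argument goes through. Without it the conclusion is simply false: if $h$ were, say, a strict \'etale cover of degree $d>1$, then $\mathcal{A}_Y = \mathcal{A}_X$, the target square is not cartesian, the moduli square is not cartesian, and $\ocM(h)_*[\ocM(Y)]^{\rm vir} = d\,[\ocM(X)]^{\rm vir}$. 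This also shows that the hypothesis ``log \'etale'' needs to be sharpened to the class of morphisms (logarithmic modifications) for which $Y$ is genuinely pulled back from a modification of $\mathcal{A}_X$, which is the setting of \cite{AW}. Your Step~3 sketch of birationality of $\mathfrak{M}(h_{\mathcal{A}})$ is plausible but more speculative than what the survey supplies (it only cites the result); that part is not a gap so much as an attempt to fill in a black box the paper leaves closed.
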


This theorem is proven by working relative to the underlying morphism of Artin fans $\mathcal{Y}\to\mathcal{X}$ provided by Theorem~\ref{thm:artinfunctoriality}.  This morphism and $h$ fit into a cartesian diagram
\begin{equation} \label{diag:lift-map} \vcenter{\xymatrix{
Y \ar[r]^h \ar[d] & X \ar[d] \\
\mathcal{Y} \ar[r] & \mathcal{X}.
}} \end{equation}

One carefully constructs a cartesian diagram of moduli spaces
\begin{equation} \label{eqn:3} \vcenter{\xymatrix{
\ocM(Y) \ar[r]^{\ocM(h)}\ar[d] &  		\ocM(X)\ar[d] \\
\ocM(\mathcal{Y} )\ar[r] &	\ocM(\mathcal{X})}}
\end{equation}
to which principles (1), (2) and (3) in the proof of Theorem  \ref{thm:compare} apply. Costello's comparison theorem \cite[Theorem~5.0.1]{costello} again gives the result.

\subsection{Boundedness of Logarithmic Stable Maps}

A recent application of both Theorem~\ref{AWmaintheorem} and the theory of Artin fans can be found in \cite{ACMW}, where a general statement for the boundedness of logarithmic stable maps to projective logarithmic schemes is proven.

\begin{theorem}[\protect{\cite[Theorem 1.1.1]{ACMW}}]\label{thm:boundedness}
Let $X$ be a projective logarithmic scheme. Then the stack $\ocM_{\Gamma}(X)$ of stable logarithmic maps to $X$ with discrete data $\Gamma$ is of finite type. 
\end{theorem}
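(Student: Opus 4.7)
The plan is to use the Artin fan $\cA_X$ of $X$ from Section~\ref{sec:artinfanofascheme} to split the problem into a combinatorial part (maps to $\cA_X$) and a geometric part (underlying stable maps to $\underline{X}$). Because $X \to \cA_X$ is strict and representable, while $\cA_X$ itself is logarithmically \'etale over $\spec k$, the moduli problem for log maps to $X$ should factor through a moduli problem for log maps to $\cA_X$, whose fibers capture only classical (non-logarithmic) data that the projectivity of $\underline{X}$ controls.

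Concretely, I would first produce a natural factorization
\[
\ocM_\Gamma(X) \xrightarrow{\Phi} \mathfrak{M}(\cA_X)
\]
where $\mathfrak{M}(\cA_X)$ denotes the stack of prestable logarithmic maps from marked curves to the Artin fan $\cA_X$. Next, using that $X$ is of finite type and hence $\cA_X$ has a finite underlying topological space (and only finitely many cones at each point by Lemma~\ref{lem:etale-site-of-Log}), I would argue that the discrete data $\Gamma$ forces the map to $\cA_X$ to have one of only finitely many combinatorial types $\tau$ — constraining the dual graph of $C$, the assignment of vertices and edges to cones of $\cA_X$, and the contact order data at markings. For each such $\tau$ the locally closed substack $\mathfrak{M}_\tau(\cA_X)$ is of finite type: since $\cA_X$ is log \'etale over $k$, the stack of maps of fixed combinatorial type reduces to a combinatorial enhancement of $\ocM_{g,n}$-type moduli, which is of finite type. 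I would then analyze the fibers of $\Phi$: because $X \to \cA_X$ is strict and representable and $\Gamma$ fixes an underlying curve class $\beta$, a fiber is exactly a moduli space of underlying stable maps $\underline{C} \to \underline{X}$ of bounded degree lifting a given map $\underline{C} \to |\cA_X|$. By the classical Kontsevich boundedness theorem applied to the projective scheme $\underline{X}$, this fiber is of finite type. Combining the finite-type target with finite-type fibers yields that $\ocM_\Gamma(X)$ is of finite type.

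The main obstacle is the boundedness of the combinatorial side, i.e.\ showing that $\Gamma$ forces only finitely many combinatorial types $\tau$ of maps into $\cA_X$, and that each $\mathfrak{M}_\tau(\cA_X)$ is of finite type. The subtlety is twofold: on the one hand, $\cA_X$ may carry monodromy (as in Section~\ref{sec:cone-stack-motivation}), so one must work with cone stacks rather than ordinary Kato fans; on the other hand, contact orders at markings and nodes must be bounded in terms of $\beta$ and the genus. In the logarithmically smooth case this bounding is classical (contact orders are controlled by intersection numbers with boundary divisors); for a general $X$ one likely needs to pass through a logarithmic modification $\widetilde X \to X$ with $\widetilde X$ logarithmically smooth, use the known boundedness on $\widetilde X$, and then invoke Theorem~\ref{AWmaintheorem} (or its underlying diagram~\eqref{eqn:3}) to transfer boundedness back to $X$ — with care taken that the preimage in the discrete data on $\widetilde X$ of a fixed $\Gamma$ on $X$ is itself finite, which again ultimately reduces to the combinatorics of $\cA_X$ being finite.
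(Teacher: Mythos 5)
Your final paragraph ultimately lands near the actual strategy of \cite{ACMW}, but the primary line of attack you propose---directly bounding the combinatorial types of maps into $\cA_X$ for a general projective log scheme $X$---is precisely what fails without preparation, and you would be proving the hard part of the theorem rather than reducing it. The difficulty you flag (bounding contact orders at markings and nodes in terms of $\Gamma$) is not a technicality one can handle on the side: it is the entire content of the boundedness problem once the Kontsevich boundedness for $\underline{X}$ is in hand, and there is no elementary argument reducing it to ``finitely many cones'' in $\cA_X$. The known direct arguments require a global positivity hypothesis on the characteristic sheaf, and those are exactly the cases already handled by \cite{AC,Chen} ($\onM_X$ globally generated) and \cite{GS} ($\onM_X^{\mathrm{gp}}$ globally generated).

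The paper's approach is therefore a reduction, not a direct bound: modify the Artin fan $\cA_X$ to a smooth Artin fan $\mathcal{Y}$ with globally generated, locally free characteristic sheaf (Proposition~\ref{prop:resolve-artin-fan}), lift this to a log modification $\widetilde X \to X$, invoke the already-known boundedness in the globally generated case, and transport it back. Two points in your sketch misidentify what is needed. First, the target of the modification is not logarithmic smoothness of $\widetilde X$ (the underlying scheme of $\widetilde X$ need not become smooth, and indeed need not be), but rather global generation and local freeness of the characteristic sheaf; these are the hypotheses under which \cite{GS} applies. Second, Theorem~\ref{AWmaintheorem} as stated is a virtual pushforward statement and does not by itself transfer the finite-type property; what is used is a \emph{refinement} of that comparison (the paper is explicit about this), which gives appropriate surjectivity and finiteness of the fibers of $\ocM_{\widetilde\Gamma}(\widetilde X) \to \ocM_\Gamma(X)$, together with the finiteness of the set of discrete data $\widetilde\Gamma$ over a fixed $\Gamma$ that you rightly worry about. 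So the conceptual shape of your last paragraph is right, but the proof you actually wrote out in the body would stall at the step you yourself identify as ``the main obstacle.''
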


%The discrete data $\Gamma = (g, n, \beta, \{\alpha_i\}_{i=1}^n)$ consists of the genus $g \in \ZZ_{\geq 0}$ of the source curve, the number $n$ of the marked points, the curve class $\beta \in H_{2}(X)$, and the contact order or tangency condition $\alpha_i$ for the $i$-th marked point, as in Section~\ref{sec:targets}.

The boundedness of $\ocM_{\Gamma}(X)$ has been established when the characteristic monoid $\onM_{X}$ is globally generated  in \cite{AC, Chen}, and more generally when the associated group $\onM_{X}^{\rm gp}$ is globally generated in \cite{GS}. The strategy used in \cite{ACMW} for the general setting is to reduce to the case of a globally generated sheaf of monoids $\onM_{X}$ by studying the behavior of stable logarithmic maps under an appropriate modification of $X$. This is accomplished by modifying the Artin fan $\mathcal{X} = \cA_X$ constructed in Section \ref{sec:artinfanofascheme}, lifting this modification to the level of logarithmic schemes, and applying a virtual birationality result refining Theorem \ref{AWmaintheorem}.  

A key step is the modification of $\mathcal{X}$:

\begin{proposition}\label{prop:resolve-artin-fan}\cite[Proposition 1.3.1]{ACMW}:
Let $\mathcal{X}$ be an Artin fan. Then there exists a projective, birational, and logarithmically \'etale morphism $\mathcal{Y} \to \mathcal{X}$ such that $\mathcal{Y}$ is a smooth Artin fan, and the characteristic sheaf $\onM_{\mathcal{Y}}$ is globally generated and locally free.
%\Jonathan{maybe this should go in the Artin fans section?}
\end{proposition}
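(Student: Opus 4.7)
The plan is to argue combinatorially on the underlying cone stack of $\mathcal{X}$, then lift the resulting subdivision back to a morphism of Artin fans via the equivalence recalled in Section~\ref{sec:artin}. The construction will proceed in two refinements: first achieve smoothness (which yields local freeness of the characteristic), then achieve global generation via a further polytopal refinement.

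For the first step, I would use the \'etale presentation of $\mathcal{X}$ by Artin cones $\mathcal{A}_{M_i}$, equivalently a cone stack over $\RPC$, to reduce the question \'etale-locally to the Kato fan combinatorics. Invoking the KKMS--Kato resolution theorem from the previous section, one obtains a proper subdivision $F' \to F_{\mathcal{X}}$ such that every cone of $F'$ is isomorphic to $\Spec \NN^r$. Arranging this subdivision as an iterated sequence of star subdivisions at barycenters of singular cones ensures that it is projective. Lifting to Artin fans yields a projective, birational, logarithmically \'etale morphism $\mathcal{Y}_0 \to \mathcal{X}$ with $\mathcal{Y}_0$ smooth, and hence with $\onM_{\mathcal{Y}_0}$ locally free.

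For the second step, I would further subdivide $\mathcal{Y}_0$ to achieve global generation. A smooth Artin fan has globally generated characteristic precisely when its underlying cone complex admits a \emph{polarization}, i.e.\ a strictly convex integer piecewise-linear function: each ray is then recoverable as the restriction of a global section. I would construct $\mathcal{Y} \to \mathcal{Y}_0$ as a projective subdivision realizing such a polarization --- concretely, by a sequence of star subdivisions introducing new rays each carrying a prescribed global section, interleaved with toric resolution steps to restore smoothness whenever it is destroyed. Equivalently, one chooses a regular (polytopal) subdivision of each cone of $\mathcal{Y}_0$ compatible with the face identifications. The composition $\mathcal{Y} \to \mathcal{Y}_0 \to \mathcal{X}$ retains projectivity, birationality and logarithmic \'etaleness, $\mathcal{Y}$ remains smooth, and its characteristic is both locally free and globally generated.

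The main obstacle is the second step: preserving smoothness and projectivity while achieving global generation. Barycentric subdivisions generally destroy smoothness, so the polarizing subdivision must be interleaved with further toric resolutions, and one must verify that the resulting procedure terminates or can be made globally consistent. Moreover, when $\mathcal{X}$ carries monodromy, or when it fails to be quasi-compact, the combinatorial construction must be sufficiently canonical to descend through the \'etale cover of $\mathcal{X}$ by Artin cones and to glue across open subfans. Checking this descent property, and assembling the local polytopal data into a globally coherent subdivision, is the most delicate part of the argument.
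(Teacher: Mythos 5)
Your first step matches the paper: the paper's proof sketch likewise reduces to the KKMS resolution of cone complexes, applying star subdivisions until each cone is $\Spec\NN^r$, and then lifts the subdivision back to Artin fans via the cone-stack equivalence. But your second step rests on a misconception. Global generation of $\onM_{\mathcal Y}$ has nothing to do with a polarization (a strictly convex integral piecewise-linear function); that notion governs \emph{ampleness}, not global generation of the characteristic sheaf. What global generation actually requires is that every element of a stalk $\onM_{\mathcal Y, y}$ lift to a global section, and for a smooth cone (or Artin) complex this means that each ray of each cone is a \emph{globally defined} object rather than one that is identified with another ray or permuted by monodromy. The nodal-cubic and Whitney-umbrella examples in Section~\ref{sec:cone-space-motivation}--\ref{sec:cone-stack-motivation} are exactly the obstructions: even a smooth fan fails global generation when its rays are glued to one another (self-intersection) or swapped by a loop (monodromy). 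A polarization neither detects nor repairs this.

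Compounding this, your claim that ``barycentric subdivisions generally destroy smoothness'' is false and leads you away from the correct repair. The barycentric subdivision of a smooth cone $\NN^r$ is again smooth: its maximal cones are spanned by the unimodular chains $e_{\pi(1)},\ e_{\pi(1)}+e_{\pi(2)},\ \ldots,\ e_{\pi(1)}+\cdots+e_{\pi(r)}$ for $\pi\in\frakS_r$. This is precisely why the paper's strategy works: first star-subdivide to smoothness, then apply barycentric subdivision, which preserves smoothness and simultaneously eliminates self-identifications and monodromy because it converts the cone complex into a genuine simplicial complex whose rays are canonical, hence globally defined. Projectivity is maintained since each step is a composite of star subdivisions. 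Your machinery of interleaving a ``polarizing subdivision'' with toric resolution steps and worrying about termination is therefore solving a problem that does not arise; the correct and simpler second step is a barycentric subdivision. Your concern about descending the construction through the \'etale cover by Artin cones is legitimate in spirit, but it is exactly what the cone-stack/Artin-fan framework and the canonicity of the barycentric construction are designed to handle, rather than an open obstacle.
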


The proof of this Proposition is analogous to \cite[I.11]{KKMS}: successive star subdivisions are applied until each cone is smooth (see Example~\ref{ex:star}), and barycentric subdivisions guarantee that the resulting logarithmic structure has no monodromy.

\section{Skeletons and tropicalization}
\label{sec:skel}
\subsection{Berkovich spaces}

Ever since \cite{Tate_rigidanalyticspaces} it has been known that affinoid algebras are the correct coordinate rings for defining non-Archimedean analogues of complex analytic spaces. Working with affinoid algebras as coordinate rings alone is enough information to build an intricate theory with many applications. The work of V. Berkovich \cite{Berkovich_book} and \cite{Berkovich_etalecoho} beautifully enriches this theory by providing us with an alternative definition of non-Archimedean analytic spaces, which naturally come with underlying topological spaces that have many of the favorable properties of complex analytic spaces, such as being locally path-connected and locally compact. 

Let $\mathbf{k}$ be a non-Archimedean field, i.e. suppose that $\mathbf{k}$ is complete with respect to a non-Archimedean absolute  value $\vert.\vert$. We explicitly allow $\mathbf{k}$ to carry the trivial absolute value. If $U=\Spec A$ is an affine scheme of finite type over $\mathbf{k}$, as a set the \emph{analytic space} $U^{an}$ associated to $U$ is equal to the set of multiplicative seminorms on $A$ that restrict to the given absolute value on $\mathbf{k}$. We usually write $x$ for a point in $U^{an}$ and $\vert .\vert_x$, if we want to emphasize that $x$ is thought of as a multiplicative seminorm on $A$. The topology on $U^{an}$ is the coarsest that makes the maps
\begin{equation*}\begin{split}
U^{an}&\longrightarrow \R \\
x&\longmapsto \vert f\vert_x
\end{split}\end{equation*}
continuous for all $f\in A$. There is a natural continuous \emph{structure morphism} $\rho\mathrel{\mathop:}U^{an}\rightarrow U$ given by sending $x\in U^{an}$ to the preimage of zero $\big\{f\in A\big\vert \vert f\vert_x=0\big\}$. A morphism $f\mathrel{\mathop:}U\rightarrow V$ between affine schemes $U=\Spec A$ and $V=\Spec B$ of finite type over $\mathbf{k}$, given by a $\mathbf{k}$-algebra homomorphism $f^\#\mathrel{\mathop:}B\rightarrow A$ induces a natural continuous map $f^{an}\mathrel{\mathop:}U^{an}\rightarrow V^{an}$ given by associating to $x\in U^{an}$ the point $f(x)\in V^{an}$ given as the multiplicative seminorm 
\begin{equation*}
\vert.\vert_{f(x)}=\vert.\vert_x\circ f^\#
\end{equation*}
on $B$. The association $f\mapsto f^{an}$ is functorial in $f$.

Let $X$ be a scheme that is locally of finite type over $\mathbf{k}$. Choose a covering $U_i=\Spec A_i$ of $X$ by open affines. Then the \emph{analytic space} $X^{an}$ associated to $X$ is defined by glueing the $U_i^{an}$ over the open subsets $\rho^{-1}(U_i\cap U_j)$. It is easy to see that this construction does not depend on the choice of a covering and that it is functorial with respect to morphisms of schemes over $\mathbf{k}$. We refrain from describing the structure sheaf on $X^{an}$, since we are only going to be interested in the topological properties of $X^{an}$, and refer the reader to \cite{Berkovich_book,Berkovich_etalecoho,Temkin-intro} 
for further details. 

\begin{example}\label{example_A1} Suppose that $\mathbf{k}$ is algebraically closed and endowed with the trivial absolute value. Let $t$ be a coordinate on the the affine line $\AA^1=\Spec \mathbf{k}[t]$. One can classify the points in $(\AA^1)^{an}$ as follows:
\begin{itemize}
\item For every $a\in \mathbf{k}$ and $r\in[0,1)$ the seminorm $\vert.\vert_{a,r}$ on $\mathbf{k}[t]$ that is uniquely determined by
\begin{equation*}
\vert t-a\vert_{a,r}=r \ ,
\end{equation*}
and
\item for $r\in[1,\infty)$ the seminorm $\vert.\vert_{\infty, r}$ uniquely determined by
\begin{equation*}
\vert t-a\vert_{\infty, r}=r
\end{equation*}
for all $a\in \mathbf{k}$. 
\end{itemize}
Noting that 
\begin{equation*}
\lim_{r\rightarrow 1}\vert.\vert_{a,r}=\vert.\vert_{\infty,1}
\end{equation*}
for all $a\in \mathbf{k}$ we can visualize $(\AA^1)^{an}$ as follows:
\begin{center}\begin{tikzpicture}

\draw (0,2) circle (0.08 cm);
\fill (0,0) circle (0.08 cm);
\fill (-2,-2) circle (0.08 cm);
\fill (-1,-2) circle (0.08 cm);
\fill (0,-2) circle (0.08 cm);
\fill (1,-2) circle (0.08 cm);
\fill (2,-2) circle (0.08 cm);

\draw (0,0) -- (0,1.92);
\draw (0,0) -- (-2,-2);
\draw (0,0) -- (-1,-2);
\draw (0,0) -- (0,-2);
\draw (0,0) -- (1,-2);
\draw (0,0) -- (2,-2);

\node at (0,-2.5) {$0$};
\node at (0,2.5) {$\infty$};
\node at (3,-2) {$\A^1(\mathbf{k})$};
\node at (0.5,0) {$\eta$};
\node at (2,1) {$(\A^1)^{an}$};

\fill (-1.7,-1) circle (0.03cm);
\fill (-1.9,-1) circle (0.03cm);
\fill (-2.1,-1) circle (0.03cm);
\fill (1.7,-1) circle (0.03cm);
\fill (1.9,-1) circle (0.03cm);
\fill (2.1,-1) circle (0.03cm);

\end{tikzpicture}\end{center}

In particular, we can embed the closed points \ChJonathan{of} $\AA^1(\mathbf{k})$ into $(\AA^{1})^{an}$ by the association $a\mapsto \vert .\vert_{a,0}$ for \ChDan{$a\in \mathbf{k}$.}%, and by associating to the generic point $\eta$ of $\AA^1$ to the \emph{Gauss point} $\vert.\vert_{\infty, 1}$. 
\end{example}

One can give an alternative description of $X^{an}$ as the set of equivalence classes of non-Archimedean points of $X$. A \emph{non-Archimedean point} of $X$ consists of a pair $(K, \phi)$ where $K$ is a non-Archimedean extension of $\mathbf{k}$ and $\phi\mathrel{\mathop:}\Spec K\rightarrow X$. Two non-Archimedean points $(K,\phi)$ and $(L,\psi)$ of $X$ are \emph{equivalent}, if there is a common non-Archimedean extension $\Omega$ of both $K$ and $L$ such that the diagram
\begin{equation*}\begin{CD}
\Spec \Omega @>>>\Spec L\\
@VVV @VV\psi V\\
\Spec K@>>\phi> X
\end{CD}\end{equation*}
commutes.

\begin{proposition}
The analytic space $X^{an}$ is equal to the set of non-Archimedean points modulo equivalence.
\end{proposition}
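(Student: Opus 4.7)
The plan is to construct explicit mutually inverse maps between $X^{an}$ and the set $\Pi(X)/{\sim}$ of equivalence classes of non-Archimedean points, reducing first to the affine case. Both sides are compatible with open affine covers: $X^{an}$ is glued from $U^{an}$ along preimages under $\rho$, and any non-Archimedean point $(K,\phi)$ factors through some affine open of $X$ because $\Spec K$ has a single point. So it suffices to treat $X = \Spec A$ with $A$ of finite type over $\mathbf{k}$.

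First, I would define a map $\Phi$ from non-Archimedean points to $X^{an}$ by pulling back absolute values: for $(K,\phi)$ with $\phi^{\#}:A\to K$, set
\[
\Phi(K,\phi):\ f\longmapsto |\phi^{\#}(f)|_{K},
\]
which is a multiplicative seminorm on $A$ restricting to $|\cdot|$ on $\mathbf{k}$, hence a point of $X^{an}$. If $(K,\phi)\sim(L,\psi)$ via a common extension $\Omega$, then both $\phi$ and $\psi$ factor through isometric inclusions into $\Omega$, so $\Phi(K,\phi)=\Phi(L,\psi)$; thus $\Phi$ descends to $\overline{\Phi}:\Pi(X)/{\sim}\to X^{an}$.

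Next, I would construct a section $\Psi$. Given $x\in X^{an}$ with seminorm $|\cdot|_{x}$, the set $\mathfrak{p}_{x}=\{f\in A:|f|_{x}=0\}$ is a prime ideal by multiplicativity; the seminorm descends to a multiplicative norm on $A/\mathfrak{p}_{x}$ and extends uniquely to the fraction field $\kappa(x)$. Let $\mathcal{H}(x)$ be the completion, which is a non-Archimedean extension of $\mathbf{k}$, and let $\phi_{x}:\Spec\mathcal{H}(x)\to X$ be the induced morphism; set $\Psi(x)=[(\mathcal{H}(x),\phi_{x})]$. By construction $\Phi\circ\Psi=\id_{X^{an}}$. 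Conversely, given $(K,\phi)$ with image $x=\Phi(K,\phi)$, the factorization $A\twoheadrightarrow A/\mathfrak{p}_{x}\hookrightarrow\kappa(x)\to K$ is isometric onto its image, so it extends to an isometric embedding $\iota:\mathcal{H}(x)\hookrightarrow K$ compatible with $\phi_{x}$ and $\phi$. Choosing $\Omega=K$ with $\iota$ on $\mathcal{H}(x)$ and the identity on $K$ realizes the equivalence $(K,\phi)\sim(\mathcal{H}(x),\phi_{x})$, so $\Psi\circ\overline{\Phi}=\id$.

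The main obstacle is verifying that $\sim$ is genuinely an equivalence relation, since reflexivity and symmetry are immediate but transitivity requires producing common non-Archimedean extensions. Given $(K_{1},\phi_{1})\sim(K_{2},\phi_{2})$ via $\Omega_{12}$ and $(K_{2},\phi_{2})\sim(K_{3},\phi_{3})$ via $\Omega_{23}$, one forms the completed tensor product $\Omega_{12}\,\widehat{\otimes}_{K_{2}}\,\Omega_{23}$, which is a nonzero Banach $K_{2}$-algebra, selects a maximal ideal $\mathfrak{m}$, and takes the completion of the quotient as $\Omega_{13}$. Checking that the restrictions of $|\cdot|_{\Omega_{13}}$ to $K_{1}$ and $K_{3}$ agree with the given absolute values and that the induced square commutes is then routine. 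With transitivity in hand, $\overline{\Phi}$ and $\Psi$ are mutually inverse bijections, proving the proposition.
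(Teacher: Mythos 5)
Your proof takes essentially the same approach as the paper: reduce to the affine case, send a non-Archimedean point $(K,\phi)$ to the seminorm $f\mapsto|\phi^{\#}(f)|_{K}$, and conversely send $x$ to the completed residue field $\mathcal{H}(x)$ with its canonical morphism, checking the two constructions are inverse up to equivalence. The extra paragraph verifying transitivity of $\sim$ via completed tensor products is a sound addition of rigor that the paper takes for granted, but does not change the structure of the argument.
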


\begin{proof}
We may assume that $X=\Spec A$ is affine. A non-Archimedean point $(K,\phi)$ of $X$ naturally induces a multiplicative seminorm 
\begin{equation*}
A\xrightarrow{\phi^\#}K\xrightarrow{\vert.\vert}\R
\end{equation*}
on $A$. Conversely, given $x\in X^{an}$, we can consider the integral domain $A/\ker\vert.\vert_x$ and form the completion $\calH(x)$ of its field of fractions. The natural homomorphism $A\rightarrow\calH(x)$ defines a non-Archimedean point on $X$ and these two constructions are inverse up to equivalence. 
\end{proof}

Now suppose that $\mathbf{k}$ is endowed with the trivial absolute value. In \cite[Section 1]{Thuillier} Thuillier introduces a slight variant of the analytification functor that functorially associates to a scheme locally of finite type over $\mathbf{k}$ a non-Archimedean analytic space $X^\beth$. Its points are pairs $(R,\phi)$ consisting of a valuation ring $R$ extending $\mathbf{k}$ together with a morphism $\phi\mathrel{\mathop:}\Spec R\rightarrow X$ modulo an equivalence relation as above: Two pairs $(R,\phi)$ and $(S,\psi)$ are \emph{equivalent}, if there is a common valuation ring $\calO$ extending both $R$ and $S$ such that the diagram
\begin{equation*}\begin{CD}
\Spec \calO @>>>\Spec S\\
@VVV @VV\psi V\\
\Spec R@>>\phi> X
\end{CD}\end{equation*}
commutes.

So, if $U=\Spec A$ is affine, then $U^\beth$ is nothing but the set of multiplicative seminorms $\vert.\vert_x$ on $A$ that are bounded, i.e. that fulfill $\vert f\vert_x\leq 1$ for all $f\in A$. The topology on $U^\beth$ is the one induced from $U^{an}$ and there is a natural anti-continuous \emph{reduction map} $r\mathrel{\mathop:}U^\beth\rightarrow U$ that sends $x\in U^\beth$ to the prime ideal $\big\{f\in A\big\vert \vert f\vert_x<1\big\}$. For a general scheme $X$ locally of finite type over $\mathbf{k}$\ChDan{, we again choose} a covering $U_i$ by open affine subsets, and now glue the $U_i^\beth$ over the closed subsets $r^{-1}(U_i\cap U_j)$ in order to obtain $X^\beth$. 

As an immediate application of the valuative criteria for separatedness and properness, we obtain that, if $X$ is separated, then $X^\beth$ is naturally a \ChMartin{locally closed} subspace of $X^{an}$, and, if $X$ is complete, then $X^\beth=X^{an}$.

\begin{example} The space $(\AA^1)^\beth$ is precisely the subspace of $(\AA^1)^{an}$ consisting of the points $\vert .\vert_{a,r}$ for $a\in \mathbf{k}$ and $r\in[0,1)$ as well as the Gauss point $\vert.\vert_{\infty,1}$.
\begin{center}\begin{tikzpicture}
%\draw (8,2) circle (0.08 cm);
\fill (8,0) circle (0.08 cm);
\fill (6,-2) circle (0.08 cm);
\fill (7,-2) circle (0.08 cm);
\fill (8,-2) circle (0.08 cm);
\fill (9,-2) circle (0.08 cm);
\fill (10,-2) circle (0.08 cm);

%\draw (8,0) -- (8,1.92);
\draw (8,0) -- (6,-2);
\draw (8,0) -- (7,-2);
\draw (8,0) -- (8,-2);
\draw (8,0) -- (9,-2);
\draw (8,0) -- (10,-2);

\node at (8,-2.5) {$0$};
%\node at (6,2.5) {$\infty$};
\node at (10.5,-2) {$\A^1$};
\node at (8.5,0) {$\eta$};
\node at (10,1) {$(\A^1)^\beth$};

\fill (6.3,-1) circle (0.03cm);
\fill (6.1,-1) circle (0.03cm);
\fill (5.9,-1) circle (0.03cm);
\fill (9.7,-1) circle (0.03cm);
\fill (9.9,-1) circle (0.03cm);
\fill (10.1,-1) circle (0.03cm);

\end{tikzpicture}\end{center}
\end{example}

\subsection{The case of toric varieties}\label{section_troptor}

Let $\mathbf{k}$ be a non-Archimedean field. As observed in \cite{Gubler_tropanalvar}, \cite{EinsiedlerKapranovLind_nonArchamoebas}, \cite{Gubler_guide}, and \cite{BakerPayneRabinoff_nonArchtrop}, there is an intricate relationship between non-Archimedean analytic geometry and tropical geometry. In particular, in many interesting situations the tropicalization of an algebraic variety $X$ over $\mathbf{k}$ can be regarded as a natural deformation retract of $X^{an}$, a so called \emph{skeleton} of $X^{an}$. In this section we are going to give a detailed explanation of this relationship in the simplest possible case, that of toric varieties.

Let $T$ a split algebraic torus with character lattice $M$ and cocharacter lattice $N$, and $X=X(\Delta)$ a $T$-toric variety that is defined by a rational polyhedral fan $\Delta$ in $N_\R$. We refer the reader to Section~\ref{Sec:toric} for a brief summary of this beautiful theory, and to \cite{Fulton_toric} and \cite{tv} for a more thorough account.
%\Jonathan{reference to Section~\ref{Sec:toric} added, reference to Cox et al.\ corrected}  

In \cite{Kajiwara_troptoric} and \cite{Payne_anallimittrop} Kajiwara and Payne independently construct a  \emph{tropicalization map}
\begin{equation*}
\trop_\Delta\mathrel{\mathop:}X^{an}\longrightarrow N_\R(\Delta)
\end{equation*} 
associated to $X$, whose codomain is a partial compactification of $N_\R$, uniquely determined by $\Delta$ (also see \cite[Section 4]{PopescuPampuStepanov_localtrop} and \cite[Section 3]{Rabinoff_newtonpolygon}). 

For a cone $\sigma$ in $\Delta$ set $N_\R(\sigma)=\Hom(S_\sigma,\Rbar)$, where $S_\sigma$ denotes the toric monoid $\sigma^\vee\cap M$ and write $\Rbar$ for the additive monoid $(\R\sqcup\{\infty\},+)$. Endow $N_\R(\sigma)$ with the topology of pointwise convergence. 

\begin{lemma}[\cite{Rabinoff_newtonpolygon} Proposition 3.4]\begin{enumerate}[(i)]
\item The space $N_\R(\sigma)$ has a stratification by locally closed subsets isomorphic to the vector spaces $N_\R/\Span(\tau)$ for all faces $\tau$ of $\sigma$. 
\item For a face $\tau$ of $\sigma$ the natural map $S_\sigma\rightarrow S_\tau$ induces the open embedding 
\begin{equation*}
N_\R(\tau)\hooklongrightarrow N_\R(\sigma)
\end{equation*} 
that identifies $N_\R(\tau)$ with the union of strata in $N_\R(\sigma)$ corresponding to faces of $\tau$ in $N_R(\sigma)$. 
\end{enumerate}\end{lemma}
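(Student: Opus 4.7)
The plan is to analyze a point $\varphi \in N_\R(\sigma) = \Hom(S_\sigma, \Rbar)$ by means of its \emph{finite locus} $F_\varphi := \varphi^{-1}(\R) \subset S_\sigma$, and then to translate the statements into the familiar correspondence between faces of $\sigma$ and faces of the monoid $S_\sigma$.

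First, I would observe that $F_\varphi$ is a face of $S_\sigma$: it is a submonoid because $\R\subset\Rbar$ is closed under addition, and if $a+b\in F_\varphi$ then neither $\varphi(a)$ nor $\varphi(b)$ can equal $\infty$, so $a,b\in F_\varphi$. Next I would recall the standard order-reversing bijection between faces $\tau\preceq\sigma$ and faces of $S_\sigma$, given by $\tau\mapsto F_\tau := \tau^\perp\cap S_\sigma$, so that every face of $S_\sigma$ arises in this way. For (i) it then suffices to identify, for a fixed face $\tau$, the stratum
\[
N_\R(\sigma)_\tau := \{\varphi \in N_\R(\sigma)\ :\ F_\varphi = F_\tau\}
\]
with $N_\R/\Span(\tau)$. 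Indeed, the restriction map $\varphi\mapsto \varphi|_{F_\tau}$ lands in $\Hom(F_\tau,\R)$, and since $F_\tau$ is a fine submonoid of the lattice $\tau^\perp\cap M$ which generates it as a group, any such restriction extends uniquely to a homomorphism $\tau^\perp\cap M\to\R$. This identifies $N_\R(\sigma)_\tau$ with $\Hom(\tau^\perp\cap M,\R)=(\tau^\perp)^\ast = N_\R/\Span(\tau)$, as required. Local closedness of the stratification is visible from the explicit description of the closure: the closure of $N_\R(\sigma)_\tau$ consists of all $\varphi$ with $F_\varphi\subset F_\tau$, i.e.\ lying in strata indexed by $\tau'\succeq\tau$.

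For (ii), the key point is that the inclusion $S_\sigma\hookrightarrow S_\tau$ realizes $S_\tau$ as the localization $S_\sigma[-F_\tau]$, because $\tau^\vee = \sigma^\vee + \tau^\perp$ gives $S_\tau = S_\sigma + (\tau^\perp\cap M) = S_\sigma + F_\tau^{gp}$. By the universal property of localization, the restriction map
\[
\Hom(S_\tau,\Rbar) \longrightarrow \Hom(S_\sigma,\Rbar)
\]
is injective with image exactly those $\varphi$ such that $\varphi(F_\tau)$ lies in the group of units of $\Rbar$, i.e.\ $\varphi(F_\tau)\subset\R$. Translating via the face correspondence, this image consists of those $\varphi$ whose finite locus $F_\varphi$ contains $F_\tau$, equivalently those whose associated face $\tau'$ of $\sigma$ satisfies $\tau'\preceq\tau$. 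This is precisely the union of strata $N_\R(\sigma)_{\tau'}$ for $\tau'\preceq\tau$, and these are visibly the strata of $N_\R(\tau)$ under its own analogous stratification, so the map is a bijection onto that union.

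It remains to verify the topological statements: openness of the image, and that the map is a homeomorphism onto its image. For openness, choose a finite generating set $f_1,\dots,f_k$ of $F_\tau$; then the image equals $\bigcap_i \mathrm{ev}_{f_i}^{-1}(\R)$, where $\mathrm{ev}_{f_i}:N_\R(\sigma)\to\Rbar$ is the (continuous) evaluation map and $\R\subset\Rbar$ is open. That the restriction map is a topological embedding is then straightforward from the pointwise-convergence topology: a net $\varphi_\lambda$ in $N_\R(\tau)$ converges iff it converges on every element of $S_\tau$, and since $S_\tau$ is generated by $S_\sigma$ together with $-F_\tau$ (which is controlled by the values on $F_\tau\subset S_\sigma$), convergence on $S_\sigma$ implies convergence on $S_\tau$. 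The principal bookkeeping obstacle is the careful identification $S_\tau = S_\sigma[-F_\tau]$ and the verification that ``localization of monoid homomorphisms into $\Rbar$'' really does correspond to forcing the image of $F_\tau$ into $\R$; everything else is a matter of unpacking definitions.
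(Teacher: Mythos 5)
The paper gives no proof of this lemma; it is simply cited to Rabinoff's Proposition~3.4, so there is no in-paper argument against which to compare. Judged on its own terms, your proof is correct and follows what I take to be the intended (and essentially the only natural) route. The organizing idea---assign to each $\varphi \in \Hom(S_\sigma,\Rbar)$ its finite locus $F_\varphi=\varphi^{-1}(\R)$, observe that $F_\varphi$ is a face of $S_\sigma$ because $\R$ is the group of units of $\Rbar$ and $\infty$ is absorbing, and then pass through the order-reversing bijection $\tau \mapsto F_\tau=\tau^\perp\cap S_\sigma$ between faces of $\sigma$ and faces of $S_\sigma$---is exactly right, and the identification $N_\R(\sigma)_\tau \cong \Hom(F_\tau,\R)\cong\Hom(\tau^\perp\cap M,\R)\cong N_\R/\Span(\tau)$ works because $F_\tau$ generates $\tau^\perp\cap M$ as a group (the corresponding face of $\sigma^\vee$ is full-dimensional in $\tau^\perp$). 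For (ii), the localization identity $S_\tau=S_\sigma[-F_\tau]$ is the correct key lemma, and the universal property of localization applied with target $\Rbar$ gives precisely the description of the image as $\{\varphi : \varphi(F_\tau)\subset\R\}$; your openness argument via a finite generating set of $F_\tau$ (Gordan) and the openness of $\R$ in $\Rbar$ is clean, and your check that the restriction map is a topological embedding correctly handles the case $\varphi(s)=\infty$ where one must avoid an $\infty-\infty$ indeterminacy. Two very small points you could make fully explicit: (a) the bijection $N_\R(\sigma)_\tau\cong\Hom(F_\tau,\R)$ requires noting that the prescription ``$\infty$ off $F_\tau$'' really does define a homomorphism, which uses again that $F_\tau$ is a face; and (b) in part~(i) the isomorphism with $N_\R/\Span(\tau)$ should be noted to be a homeomorphism, which follows since the subspace topology on the stratum is controlled by the (finitely many) generators of $F_\tau$, the values off $F_\tau$ being constantly $\infty$. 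Neither is a gap in substance.
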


So one can think of $N_\R(\sigma)$ as a partial compactification of $N_\R$ given by adding a vector space $N_\R/\Span(\tau)$ at infinity for every face $\tau\neq 0$ of $\sigma$. The partial compactification $N_\R(\Delta)$ of $N_\R$ is defined to be the colimit of the $N_\R(\sigma)$ for all cones $\sigma$ in $\Delta$. Since the stratifications on the $N_\R(\sigma)$ are compatible, the space $N_\R(\Delta)$ is a partial compactification of $N_\R$ that carries a stratification by locally closed subsets isomorphic to $N_\R/\Span(\sigma)$ for every cone $\sigma$ in $\Delta$. 

On the $T$-invariant open affine subset $X_\sigma=\Spec \mathbf{k}[S_\sigma]$ the \emph{tropicalization map}
\begin{equation*}
\trop_\sigma\mathrel{\mathop:}X_\sigma^{an}\longrightarrow N_\R(\sigma)
\end{equation*}
is defined by associating to an element $x\in X^{an}$ the homomorphism $s\mapsto -\log\vert\chi^s\vert_x$ in $N_\R(\sigma)=\Hom(S_\sigma,\Rbar)$. 

\begin{lemma}
The tropicalization map $\trop_\sigma$ is continuous and, for a face $\tau$ of $\sigma$, the natural diagram
\begin{equation*}\begin{CD}
X_\tau^{an}@>\trop_\tau>>N_\R(\tau)\\
@V\subseteq VV @VV\subseteq V\\
X_\sigma^{an}@>\trop_\sigma>> N_\R(\sigma)
\end{CD}\end{equation*}
commutes \ChJonathan{and is cartesian}.%\Jonathan{I included this, since it seems important to know that the strata of $X$ are the same as the strata of $N_\RR(\Delta)$.}
\end{lemma}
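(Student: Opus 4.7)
The plan is to verify continuity, commutativity, and Cartesianness in sequence; all three should reduce to unwinding definitions together with one structural observation about the open embedding $N_\R(\tau) \hookrightarrow N_\R(\sigma)$. For continuity I would use that $N_\R(\sigma) = \Hom_{\Mon}(S_\sigma, \Rbar)$ carries the topology of pointwise convergence, so a map into it is continuous if and only if each evaluation $\phi \mapsto \phi(s)$ pulls back to a continuous function; for $\trop_\sigma$ this pullback is $x \mapsto -\log |\chi^s|_x$. The inner map $x \mapsto |\chi^s|_x$ is continuous by the universal property defining the Berkovich topology on $X_\sigma^{an}$, and $-\log\colon [0,\infty) \to \Rbar$ (with $-\log 0 := \infty$) is continuous, so the composition is continuous.

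Commutativity is then immediate: the open embedding $N_\R(\tau) \hookrightarrow N_\R(\sigma)$ is precomposition with the inclusion $S_\sigma \hookrightarrow S_\tau$ coming from $\sigma^\vee \subset \tau^\vee$, so for $x \in X_\tau^{an}$ the restriction of $\trop_\tau(x) \in \Hom(S_\tau,\Rbar)$ to $S_\sigma$ sends $s$ to $-\log|\chi^s|_x$, which is exactly $\trop_\sigma$ applied to the image of $x$ in $X_\sigma^{an}$.

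For Cartesianness, since the vertical maps are open embeddings it suffices to identify $X_\tau^{an}$ with $\trop_\sigma^{-1}(N_\R(\tau))$ as subsets of $X_\sigma^{an}$. The key structural observation is that $N_\R(\tau) \subset N_\R(\sigma)$ is exactly the subset of those $\phi \in \Hom_{\Mon}(S_\sigma, \Rbar)$ that are finite on the face $F := \tau^\perp \cap S_\sigma$: indeed, $S_\tau = S_\sigma[-F]$, and a monoid homomorphism $\phi \colon S_\sigma \to \Rbar$ extends to $S_\tau$ (uniquely, via $\phi(-s) = -\phi(s)$) if and only if $\phi|_F$ is real-valued. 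Dually, a point $x \in X_\sigma^{an}$ lies in $X_\tau^{an}$ precisely when its multiplicative seminorm on $\mathbf{k}[S_\sigma]$ extends to $\mathbf{k}[S_\tau] = \mathbf{k}[S_\sigma][\chi^{-s} : s \in F]$, which by multiplicativity of seminorms is equivalent to $|\chi^s|_x \neq 0$ for every $s \in F$, i.e.\ $-\log|\chi^s|_x < \infty$, i.e.\ $\trop_\sigma(x) \in N_\R(\tau)$.

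The only real obstacle is pinning down the description of $N_\R(\tau)$ inside $N_\R(\sigma)$ correctly, reconciling it with the union-of-strata description from the previous lemma; once this identification is in hand, everything else is formal. In particular, the subspace topology on $X_\tau^{an} \subset X_\sigma^{an}$ automatically agrees with the topology on the fiber product $X_\sigma^{an} \times_{N_\R(\sigma)} N_\R(\tau)$ because both vertical maps are open embeddings.
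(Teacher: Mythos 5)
The paper states this lemma without proof (it is implicitly deferred to the references of Kajiwara, Payne, and Rabinoff cited nearby), so there is no in-paper argument to compare against. Your proposal correctly supplies the standard verification: pointwise-convergence topology on $\Hom(S_\sigma,\Rbar)$ reduces continuity to continuity of $x\mapsto -\log|\chi^s|_x$; commutativity follows since both transition maps (on $\Hom$-sets and on seminorms) are restriction along $S_\sigma\hookrightarrow S_\tau$; and the identification of $X_\tau^{an}$ with $\trop_\sigma^{-1}(N_\R(\tau))$ via the characterization of both as the locus where all $|\chi^s|_x$, $s\in F=\tau^\perp\cap S_\sigma$, are nonzero gives cartesianness, with the topological match being automatic because the vertical arrows are open embeddings. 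All steps check out.
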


Therefore we can glue the $\trop_\sigma$ on local $T$-invariant patches $X_\sigma$ and obtain a global continuous \emph{tropicalization map} 
\begin{equation*}
\trop_\Delta\mathrel{\mathop:}X^{an}\longrightarrow N_\R(\Delta) \ .
\end{equation*}
that restricts to $\trop_\sigma$ on $T$-invariant open affine subsets $X_\sigma$. Its restriction to a $T$-orbits is the usual tropicalization map in the sense of \cite[Section 3]{Gubler_guide}.

The following Proposition \ref{prop_toricskeleton} is well-known among experts and can be found in \cite[Section 2]{Thuillier} in the constant coefficient case, i.e. the case that $\mathbf{k}$ is trivially valued.

\begin{proposition}\label{prop_toricskeleton} The tropicalization map $\trop_\Delta\mathrel{\mathop:}X^{an}\rightarrow N_\R(\Delta)$ has a continuous section
$J_\Delta$ and the composition 
\begin{equation*}
\bfp_\Delta=J_\Delta\circ\trop_\Delta\mathrel{\mathop:}X^{an}\rightarrow X^{an}
\end{equation*}
defines a strong deformation retraction. 
\end{proposition}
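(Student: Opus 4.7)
The plan is to construct $J_\Delta$ and the homotopy locally on each $T$-invariant affine open $X_\sigma$, verify compatibility with the face inclusions $X_\tau\subset X_\sigma$ for $\tau\prec\sigma$, and patch.

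\emph{Construction of the section.} For each cone $\sigma\in\Delta$, define $J_\sigma:N_\R(\sigma)\to X_\sigma^{an}$ by sending $u\in\Hom_{\Mon}(S_\sigma,\Rbar)$ to the multiplicative seminorm
$$\Bigl|\textstyle\sum_s a_s\chi^s\Bigr|_{J_\sigma(u)}\;=\;\max_{s\,:\,a_s\neq 0}\,|a_s|\cdot e^{-u(s)}$$
on $\mathbf{k}[S_\sigma]$. I would verify: that this is a multiplicative ultrametric seminorm (multiplicativity follows from the standard weighted-Gauss-norm argument, with no cancellation in the leading term because $\mathbf{k}$ is a field); that $\trop_\sigma\circ J_\sigma=\id$, immediate from $-\log|\chi^s|_{J_\sigma(u)}=u(s)$; continuity of $J_\sigma$, since for every fixed $f$ the function $u\mapsto|f|_{J_\sigma(u)}$ is a finite maximum of continuous functions; and compatibility with face inclusions $N_\R(\tau)\hookrightarrow N_\R(\sigma)$, which is automatic from the cartesian square in the preceding lemma. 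The $J_\sigma$ therefore glue to a global continuous section $J_\Delta$ of $\trop_\Delta$.

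\emph{Construction of the homotopy.} On each $X_\sigma^{an}$ I build a continuous family $(\bfp_t)_{t\in[0,1]}$ of maps $X_\sigma^{an}\to X_\sigma^{an}$ satisfying $\bfp_0=\id$, $\bfp_1=J_\sigma\circ\trop_\sigma$, $\bfp_t\circ J_\sigma=J_\sigma$ for all $t$, and $\trop_\sigma\circ\bfp_t=\trop_\sigma$. The idea is that the fiber $\trop_\sigma^{-1}(u)$ over a point $u\in N_\R(\sigma)$ contains the distinguished point $J_\sigma(u)$, and the deformation flows every point in this fiber continuously toward $J_\sigma(u)$. This is achieved by combining the translation action of the analytic torus $T^{an}$ on $X_\sigma^{an}$ with the power-semigroup action $|\cdot|\mapsto|\cdot|^t$ on multiplicative seminorms; both preserve multiplicativity and the non-Archimedean triangle inequality and vary continuously in $t$. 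The endpoint and fixed-point conditions become direct computations on generators $\chi^s$, and compatibility with face inclusions is built into the formulas, so the $\bfp_t$ patch to a global homotopy $\bfp_\Delta$.

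\emph{Main obstacle.} The delicate point is to write down the local family $\bfp_t$ by a formula that is genuinely a multiplicative ultrametric seminorm for every $t$. A naive log-linear interpolation $\log|f|_{\bfp_t(x)}=(1-t)\log|f|_x+t\log|f|_{J_\sigma(u_x)}$ preserves multiplicativity, but can violate the strong triangle inequality when the relative order of leading terms under $|\cdot|_x$ and under $|\cdot|_{J_\sigma(u_x)}$ disagrees. Thuillier's resolution is to realize $\bfp_t$ as a composition of the $T^{an}$-translation and power actions above, both of which act through ring automorphisms (up to controlled modification) and therefore automatically preserve ultrametric multiplicativity. Once $\bfp_t$ is recognized in this form, all the required continuity and gluing statements reduce to standard properties of the analytic torus action, and the proposition follows.
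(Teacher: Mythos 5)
Your construction of the section $J_\Delta$ is identical to the paper's: you define $J_\sigma(u)$ on each affine piece $X_\sigma^{an}$ by the same weighted-Gauss formula $f=\sum_s a_s\chi^s\mapsto\max_s|a_s|e^{-u(s)}$, verify multiplicativity, ultrametricity, continuity, and the identity $\trop_\sigma\circ J_\sigma=\id$, and glue. (One small imprecision: the cartesian square for $\trop$ only guarantees that $J_\sigma$ restricted to $N_\R(\tau)$ lands in $X_\tau^{an}$; that this restriction actually \emph{equals} $J_\tau$ is a direct check from the formula, not an automatic consequence.) This part matches the paper.

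The homotopy sketch has a genuine problem. You postulate the conditions $\bfp_0=\id$, $\bfp_1=J_\sigma\circ\trop_\sigma$, $\bfp_t\circ J_\sigma=J_\sigma$, and $\trop_\sigma\circ\bfp_t=\trop_\sigma$, and then propose to realize $\bfp_t$ using the power-semigroup action $|\cdot|\mapsto|\cdot|^t$ as one of the two ingredients. But this operation \emph{rescales} the tropicalization, $\trop_\sigma(x^t)=t\cdot\trop_\sigma(x)$, directly contradicting your own constraint $\trop_\sigma\circ\bfp_t=\trop_\sigma$; and when $\mathbf{k}$ is nontrivially valued (the proposition's setting), $|\cdot|^t$ does not even restrict to the fixed absolute value on $\mathbf{k}$, so the result leaves $X^{an}$ altogether. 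Thuillier's homotopy (to which the paper defers, \cite[Lemme~2.8(1)]{Thuillier}) is built differently: $\bfp_t(x)$ is obtained by taking the supremum of $|f|$ over the image of $x$ under a shrinking affinoid (polydisc) neighborhood of the identity in $T^{an}$ — equivalently, by convolving $x$ with a continuous family of peaked Gauss-type points of $T^{an}$ flowing from the identity toward the Shilov point of $T^\circ$. This construction tautologically preserves $\trop_\sigma$ and the valuation on $\mathbf{k}$, interpolates correctly between $\id$ and $J_\sigma\circ\trop_\sigma$, and is what actually extends to non-trivially valued ground fields. You correctly identify that naive log-linear interpolation fails to be ultrametric, but the fix you attribute to Thuillier is not the one he uses, and as stated it would not produce a map $X^{an}\to X^{an}$ at all.
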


The deformation retract 
\begin{equation*}
\frakS(X)=J_\Delta\big(N_\R(\Delta)\big)=\bfp_\Delta(X^{an})
\end{equation*}
is said to be the \emph{non-Archimedean skeleton} of $X^{an}$.

\begin{proof}[Proof sketch of Proposition \ref{prop_toricskeleton}]
Consider a $T$-invariant open affine subset $X_\sigma=\Spec \mathbf{k}[S_\sigma]$. We may construct the section $J_\sigma\mathrel{\mathop:}N_\R(\sigma)\rightarrow X_\sigma^{an}$ by associating to $u\in N_\R(\sigma)=\Hom(S_\sigma,\Rbar)$ the seminorm $J_\sigma(u)$ defined by 
\begin{equation*}
J_\sigma(u)(f)=\max_{s\in S_\sigma} \: \Bigl\{ \vert a_s\vert \: \exp\big(\!-\!u(s)\big) \Bigr\}
\end{equation*}
for $f=\sum_sa_s\chi^s\in \mathbf{k}[S_\sigma]$. A direct verification shows that $J_\sigma$ is continuous and fulfills $\trop_\sigma\circ J_\sigma=\id_{N_\R(\sigma)}$. 

The construction of $J_\sigma$ is compatible with restrictions to $T$-invariant affine open subsets and we obtain a global section $J_\Delta\mathrel{\mathop:}N_\R(\Delta)\rightarrow X^{an}$ of the tropicalization map $\trop_\Delta\mathrel{\mathop:}X^{an}\rightarrow N_\R(\Delta)$. 

Since $J_\Delta$ is a section of $\trop_\Delta$, the continuous map 
\begin{equation*}
\bfp_\Delta=J_\Delta\circ\trop_\Delta\mathrel{\mathop:}X^{an}\longrightarrow X^{an}
\end{equation*}
is a retraction map. On $X_\sigma$ the image $\bfp_\sigma(x)$ of $x\in X_\sigma^{an}$ is the seminorm given by 
\begin{equation*}
\bfp_\sigma(x)(f)=\max_{s\in S_\sigma} \: \bigl\{ \vert a_s\vert \: \vert \chi^s\vert_x \bigr\}
\end{equation*}
for $f=\sum_s a_s\chi^s\in \ChDan{\mathbf{k}}[S_\sigma]$. The arguments in \cite[Section 2.2]{Thuillier} generalize to this situation and show that $\bfp_\Delta$ is, in fact, a strong deformation retraction (see in particular \cite[Lemme 2.8(1)]{Thuillier}). 
\end{proof}

\begin{example}
The skeleton of $\AA^1$ is given by the half open line connecting $0$ to $\infty$, i.e., for trivially valued $\mathbf{k}$ we have 
\begin{equation*}
\frakS(X)=\Bigl\{\:\:\vert \cdot\vert_{0,r}\:\:\Big\vert\:\: r\in [0,1)\Bigr\}\cup\Bigl\{\:\: \vert\cdot\vert_{\infty,r}\:\:\Big\vert \:\: r\in[1,\infty)\Bigr\} 
\end{equation*}
in the notation of Example \ref{example_A1}.
\end{example}

Let $\mathbf{k}$ now be endowed with the trivial absolute value. As seen in \cite[Section 2.1]{Thuillier} the deformation retraction $\bfp_\Delta\mathrel{\mathop:}X^{an}\rightarrow X^{an}$ restricts to a deformation retraction $\bfp_X\mathrel{\mathop:}X^\beth\rightarrow X^\beth$, whose image is homeomorphic to the closure $\overline{\Delta}$ of $\Delta$ in $N_\R(\Delta)$. On $T$-invariant open affine subsets $X_\sigma=\Spec \mathbf{k}[S_\sigma]$ this homeomorphism is induced by the tropicalization map 
\begin{equation*}\begin{split}
\trop_X\mathrel{\mathop:}X^\beth&\longrightarrow \Hom(S_\sigma,\Rbar_{\geq 0})\\
x&\longrightarrow \big(s\mapsto -\log\vert\chi^s\vert_x\big)
\end{split}\end{equation*}
into the extended cone $\sigmabar=\Hom(S_\sigma,\Rbar_{\geq 0})$. 

\subsection{The case of logarithmic schemes}\label{section_troplog}
\subsubsection{Zariski logarithmic schemes}\label{section_troplog-Z}

Suppose that $\mathbf{k}$ is endowed with the trivial absolute value and let $X$ be logarithmically smooth over $\mathbf{k}$. In \cite{Thuillier} Thuillier constructs a strong deformation retraction $\mathbf{p}_X\mathrel{\mathop:}X^\beth\rightarrow X^\beth$ onto a closed subset $\frakS(X)$ of $X^\beth$, the \emph{skeleton} of $X^\beth$. We summarize the basic properties of this construction in the following Proposition \ref{prop_skeletonprop}. We refer to Definition \ref{definition_smalllogsch} for the notion of small Zariski logarithmic schemes.

\begin{proposition}\label{prop_skeletonprop}\begin{enumerate}[(i)]
\item The construction of $\bfp_X$ is functorial with respect to logarithmic morphisms, i.e. given a logarithmic morphism $f\mathrel{\mathop:}X\rightarrow Y$, there is a continuous map $f^\frakS\mathrel{\mathop:}\frakS(X)\rightarrow\frakS(Y)$ that makes the diagram
\begin{equation*}\begin{CD}
X^\beth @>\bfp_X>> \frakS(X)\\
@Vf^\beth VV @VVf^\frakS V\\
Y^\beth @>\bfp_Y>> \frakS(Y)
\end{CD}\end{equation*}
commute. \ChMartin{Moreover, if $f$ is logarithmically smooth, then $f^\frakS$ is the restriction of $f^\beth$ to $\frakS(X)$.}%\footnote{\ChDan{When $f$ is logarithmically smooth $f^\frakS$ is the restriction of $f^\beth$, but not otherwise!}} 
\item For every strict \'etale neighborhood $U\rightarrow X$ that is small with respect to $x\in U$ and for every strict \'etale morphism $\gamma\mathrel{\mathop:}U\rightarrow Z$ into a $\gamma(x)$-small toric variety $Z$ the analytic map $\gamma^\beth$ induces a homeomorphism $\gamma^\frakS\mathrel{\mathop:}\frakS(U)\xrightarrow{\sim}\frakS(Z)$ that makes the diagram
\begin{equation*}\begin{CD}
U^\beth @>\bfp_U>> \frakS(U)\\
@V \gamma^\beth VV @V\sim V\gamma^\frakS  V\\
Z^\beth @>\bfp_Z>>\frakS(Z)
\end{CD}\end{equation*}
commute. 
\item The skeleton $\frakS(X)$ is the colimit of all skeletons $\frakS(U)$ associated to strict \'etale morphisms $U\rightarrow X$ from a Zariski logarithmic scheme $U$ that is small and the deformation retraction $\mathbf{p}_X\mathrel{\mathop:}X^\beth\rightarrow X^\beth$ is induced by the universal property of colimits. 
\end{enumerate}\end{proposition}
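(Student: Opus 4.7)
The plan is to build the skeleton and the retraction locally from the toric case of Section~\ref{section_troptor}, then glue and finally establish functoriality. Since $X$ is logarithmically smooth, Kato's structure theorem (Theorem \ref{KatoStrThm}) gives, étale locally on $X$, strict étale morphisms $\gamma : U \to Z$ where $Z$ is a toric variety. When $U$ is small with respect to $x \in U$ and $Z$ is chosen small with respect to $\gamma(x)$, the map $\gamma$ identifies the stratification of $U$ with (an open substratification of) that of $Z$. The first step is to exploit the toric construction: set $\frakS(U) := (\gamma^\beth)^{-1}\bigl(\frakS(Z)\bigr)$ and define $\bfp_U$ by the same formula on $U$, using that $\gamma^\beth$ is a local isomorphism on the closed strata. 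A direct calculation—identical to the one in Proposition~\ref{prop_toricskeleton}, since the formula $\max_s |a_s|\,|\chi^s|_x$ only depends on the monoid $\Mbar_{U,x}$ and the units—shows that $\bfp_U$ is a strong deformation retraction of $U^\beth$ onto $\frakS(U)$ and that $\gamma^\beth$ restricts to a homeomorphism $\gamma^\frakS : \frakS(U) \xrightarrow{\sim} \frakS(Z)$. This gives (ii) modulo independence of $\gamma$.

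The second step is independence from the chart and the construction of $\frakS(X)$. Given two charts $\gamma_i : U \to Z_i$ ($i=1,2$) through small toric varieties, one forms $U \to Z_1 \times Z_2$ and uses that the tropicalization of a strict étale morphism between small toric varieties is a bijection on characteristic monoids, so the two toric skeletons pull back to the same subset of $U^\beth$ and the two retractions agree. This shows $\frakS(U)$ and $\bfp_U$ are intrinsic. For general $X$, the small Zariski charts $U \to X$ form a cofinal system of strict étale neighborhoods; because the construction is compatible with strict étale base change (another consequence of the toric formula), one obtains (iii) by gluing: declare $\frakS(X) := \operatorname{colim}\frakS(U)$ and define $\bfp_X$ by the universal property of the colimit. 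One verifies that $\bfp_X$ is a strong deformation retraction by descent from the strong deformation retractions $\bfp_U$.

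The third step is functoriality (i). Given a logarithmic morphism $f : X \to Y$, one works locally: choose small charts $U \to X$, $V \to Y$ compatible with $f$, with strict étale maps $U \to Z$, $V \to W$ to toric varieties, such that $f|_U$ lifts to a morphism $Z \to W$ induced by a homomorphism of characteristic monoids. For toric morphisms, the tropicalization map of Section~\ref{section_troptor} is functorial, giving a continuous map $\frakS(Z) \to \frakS(W)$ compatible with the analytic map, hence a continuous $f^\frakS : \frakS(U) \to \frakS(V)$. Independence of choices and compatibility on overlaps follow as before; colimiting yields the global $f^\frakS$. The final assertion—that $f^\frakS$ is literally the restriction of $f^\beth$ when $f$ is logarithmically smooth—is local and reduces to the statement that for a toric morphism $Z \to W$ induced by a monoid map whose cokernel kernel has prime-to-char torsion only, the analytic map $Z^\beth \to W^\beth$ sends the skeleton into the skeleton; this is immediate from the $\max$-formula defining the sections $J$.

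The main obstacle is the well-definedness in step two: two different local toric presentations of the same small $U$ need not be related by a single toric morphism, and one must argue that both give rise to the same subset of $U^\beth$ and the same retraction. The cleanest route is to produce a canonical description of $\frakS(U)$ intrinsic to the logarithmic structure—for instance as the set of $x \in U^\beth$ whose associated seminorm depends on $f \in \cO_{\uU}$ only through the monomials $\chi^m$ for $m \in \Mbar_{U,x}$—and to verify that both charts realize this canonical set. Once this intrinsic characterization is in place, steps two and three become essentially formal, and step one reduces to the functoriality of the toric skeleton already established in Proposition~\ref{prop_toricskeleton}.
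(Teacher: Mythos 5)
The paper does not prove this proposition: it is presented as a summary of Thuillier's results from \cite{Thuillier}, with no proof block at all. Your local-to-global strategy---reduce to the toric case via Kato's structure theorem, glue over small strict \'etale charts, and deduce functoriality from the toric formula---follows Thuillier's construction in outline, so at the level of overall architecture you are on the right track.

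However, there is a genuine gap in your first step. You define $\frakS(U) := (\gamma^\beth)^{-1}\bigl(\frakS(Z)\bigr)$ and then assert that $\gamma^\beth$ restricts to a homeomorphism $\frakS(U) \xrightarrow{\sim} \frakS(Z)$. Since $\gamma$ is \'etale but not a monomorphism, $\gamma^\beth$ has no reason to be injective on this preimage; a nontrivial \'etale cover on the open dense stratum of $Z$ would produce several preimages of the corresponding Gauss-type skeleton points, so the preimage can strictly contain the set you want. The injectivity you need hinges on the smallness hypothesis and, more importantly, on the fact that skeleton points are precisely the \emph{monomial} seminorms determined by the characteristic monoid along the logarithmic stratification; because $\gamma$ is \emph{strict}, these are identified between $U$ and $Z$. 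Your final paragraph gestures at such an intrinsic characterization (``depends on $f$ only through the monomials $\chi^m$ for $m \in \Mbar_{U,x}$''), but that characterization is exactly the technical heart of Thuillier's argument and is not established here; moreover, as stated it is imprecise---the relevant condition concerns the formal expansion of $f$ along the reduction stratum of the point, not literal dependence on sections of $\Mbar_{U,x}$. Without this in hand, the preimage definition of $\frakS(U)$ is not well-posed, so neither the well-definedness of the glued skeleton nor part (ii) follows. The remaining steps (independence of chart, colimit gluing, functoriality, and the final assertion in (i) about logarithmically smooth $f$) are sound once the local homeomorphism is secured, and your reduction of the last assertion to the toric $\max$-formula is the correct mechanism.
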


Suppose now that the logarithmic structure on $X$ is defined in the Zariski topology. In this case, following  \cite{Ulirsch_functroplogsch}, one can use the theory of Kato fans in order to define a tropicalization map $\trop_X\mathrel{\mathop:}X^\beth\rightarrow\Sigmabar_X$ generalizing the one of toric varieties. 

Let $F$ be a fine and saturated Kato fan and consider the \emph{cone complex} 
\begin{equation*}
\Sigma_F=F(\R_{\geq 0})=\Hom\big(\Spec\R_{\geq 0},F\big)
\end{equation*}
and the \emph{extended cone complex}
\begin{equation*}
\Sigmabar_F=F(\Rbar_{\geq 0})=\Hom\big(\Spec\Rbar_{\geq 0},F\big)\supseteq\Sigma_F
\end{equation*}
associated to $F$. In order to describe the structure of $\Sigma_F$ and $\Sigmabar_F$ one can use the \emph{structure map}
\begin{equation*}\begin{split}
\rho\mathrel{\mathop:}\Sigmabar_F&\longrightarrow F\\
u&\longmapsto u\big(\{\infty\}\big)
\end{split}\end{equation*}
and the \emph{reduction map}
\begin{equation*}\begin{split}
r\mathrel{\mathop:}\Sigmabar_F&\longrightarrow F\\
u&\longmapsto u(\Rbar_{>0}) \ .
\end{split}\end{equation*}

\begin{proposition}[\cite{Ulirsch_tropcomplogreg} Propostion 3.1]\label{prop_conecomplexreduction}
The inverse image $r^{-1}(U)$ of an open affine subset $U=\Spec P$ in $\Sigmabar_F$ is the canonical compactification $\sigmabar_U=\Hom(P,\Rbar_{\geq 0})$ of a rational polyhedral cone $\sigma_U=\Hom(P,\mathbb{R}_{\geq 0})$ and its relative interior $\mathring{\sigma}_U$ is given by $r^{-1}(x)$ for the unique closed point $x$ in $U$. 
\begin{enumerate}[(i)]
\item If $V\subseteq U$ for open affine subsets $U,V\subseteq F$, then $\sigma_V$ is a face of $\sigma_U$. 
\item For two open affine subsets $U$ and $V$ of $F$ the intersection $\sigma_U\cap\sigma_V$ is a union of finitely many common faces. 
\end{enumerate}
\end{proposition}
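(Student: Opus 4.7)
The plan is to reduce everything to the fact that $\Spec\Rbar_{\geq 0}$ has a unique closed point, namely the maximal prime $\Rbar_{>0}$, so every morphism $u:\Spec\Rbar_{\geq 0}\to F$ factors through any open subset of $F$ containing its image $r(u)=u(\Rbar_{>0})$. First I would spell out $\Hom(\Spec\Rbar_{\geq 0},\Spec P)$ as $\Hom_{\Mon}(P,\Rbar_{\geq 0})$ via the $\Spec\dashv\Gamma$ adjunction for sharply monoidal spaces; combined with the factorization observation above, this identifies $r^{-1}(U)$ with $\sigmabar_U$. Unpacking the reduction map in these local coordinates yields $r(\phi)=P\setminus\phi^{-1}(0)$, and so $r^{-1}(x)$ for the unique closed point $x=P_{+}$ of $U$ consists of those $\phi$ with $\phi^{-1}(0)=\{0\}$, recovering the open stratum $\mathring{\sigma}_U$ of $\sigmabar_U$.

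For part (i), the key ingredient is the existence of minimal open affine neighborhoods in a Kato fan, established earlier in the excerpt: any point $\mathfrak{q}$ of $V\subseteq U$ has a unique minimal open affine neighborhood $V_\mathfrak{q}=\Spec P_{(\mathfrak{q})}$, where $P_{(\mathfrak{q})}=P[-(P\setminus\mathfrak{q})]$ modulo its units. A monoid homomorphism $\phi:P\to\R_{\geq 0}$ extends to $P_{(\mathfrak{q})}$ precisely when $\phi(P\setminus\mathfrak{q})$ lies in the invertible elements $\R_{\geq 0}^{\ast}=\{0\}$, that is, exactly when $\phi$ vanishes on the face $P\setminus\mathfrak{q}$. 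This identifies $\sigma_V$ with the face of $\sigma_U$ dual to the prime $\mathfrak{q}$.

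Part (ii) then follows from part (i) by a locality argument: given $\phi\in\sigma_U\cap\sigma_V$, the image $\mathfrak{q}=r(\phi)$ lies in $U\cap V$, and the minimal open affine $W_\mathfrak{q}\subseteq F$ containing $\mathfrak{q}$ is contained in both $U$ and $V$ by minimality. Hence $\phi\in\sigma_{W_\mathfrak{q}}$, which by part (i) is a common face of $\sigma_U$ and $\sigma_V$, giving $\sigma_U\cap\sigma_V=\bigcup_{\mathfrak{q}\in U\cap V}\sigma_{W_\mathfrak{q}}$; this union is finite because the affine Kato fan $U=\Spec P$ has finite underlying space when $P$ is fine. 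The one technical step that needs care is the identification $\Hom(\Spec\Rbar_{\geq 0},\Spec P)=\Hom_{\Mon}(P,\Rbar_{\geq 0})$ with the correct sharpness and locality conditions: $\Rbar_{\geq 0}$ is not itself finitely generated, so the usual universal property for Kato cones must be extended slightly. Once this is in place, the remainder is a formal consequence of the face–prime duality together with the fact that morphisms out of a local sharply monoidal space are determined by where the closed point lands.
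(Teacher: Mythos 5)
The paper cites this result to \cite{Ulirsch_tropcomplogreg} without reproducing a proof, so there is no in-paper argument to compare against; judged on its own, your proposal is correct and is the natural argument.

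Your reduction to the fact that $\Spec\Rbar_{\geq 0}$ has the unique closed point $\Rbar_{>0}$ is exactly what is needed: any $u\colon\Spec\Rbar_{\geq 0}\to F$ with $r(u)\in U$ factors through $U$, since $u^{-1}(U)$ is an open set containing the closed point and hence is everything. The $\Spec\dashv\Gamma$ adjunction for (sharp, local) monoidal spaces does hold in the generality you need: although $\Rbar_{\geq 0}$ is not fine, $\Spec\Rbar_{\geq 0}$ is a local sharply monoidal space and the sharpness requirement on stalk maps plays the role of locality, so $\Hom(\Spec\Rbar_{\geq 0},\Spec P)\cong\Hom_{\Mon}(P,\Rbar_{\geq 0})$ for any sharp $P$. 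It would be worth saying a word on why the induced stalk maps $P_{(\mathfrak q)}/\text{units}\to\Rbar_{\geq 0}$ are automatically sharp (the kernel of $P_{(\mathfrak q)}\to\Rbar_{\geq 0}$ is exactly the units), since this is where the bijection actually uses the hypotheses. One small caution in phrasing: your formula shows $r^{-1}(x)=\{\phi:\phi^{-1}(0)=\{0\}\}\subset\sigmabar_U$, which is the open stratum of the \emph{extended} cone and strictly contains the relative interior of the honest cone $\sigma_U=\Hom(P,\R_{\geq 0})$ (it also contains points with $\phi(p)=\infty$). You call it ``the open stratum $\mathring\sigma_U$ of $\sigmabar_U$,'' which is the correct reading of the notation here, but a careless reader might conflate it with the relative interior of the finite cone; flagging the distinction would prevent confusion. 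Parts (i) and (ii) are handled exactly right: for (i), the inclusion $V\subseteq U$ realizes $Q=P_{(\mathfrak q)}/\text{units}$ as a localization, and $\sigma_V$ is the face of $\sigma_U$ cut out by vanishing on the face $P\setminus\mathfrak q$; for (ii), the minimal open affine $W_{\mathfrak q}$ through $\mathfrak q=r(\phi)$ is contained in $U\cap V$ by minimality, yielding $\sigma_U\cap\sigma_V=\bigcup_{\mathfrak q\in U\cap V}\sigma_{W_{\mathfrak q}}$, and finiteness follows from $\Spec P$ being a finite topological space for $P$ fine. The only thing left implicit is the identification of the \emph{topology} on $r^{-1}(U)\subset\Sigmabar_F$ with the pointwise-convergence topology on $\Hom(P,\Rbar_{\geq 0})$, but that is built into the paper's definition of the topology on $\Sigmabar_F$ via the local models $\sigmabar_U$, so no gap results.
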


So the cone complex $\Sigma_F$ is a rational polyhedral cone complex in the sense of \cite{KKMS}. It naturally carries the weak topology, in which a subset $A\subseteq \Sigma_F$ is closed if and only if the intersections $A\cap \sigma_U$ for all open affine subsets $U=\Spec P$ of  $F$ are closed. The extended cone complex $\Sigmabar_F$ is a canonical compactification of $\Sigma_F$, carrying the weak topology with the topology of pointwise convergence on $\sigma_U=\Hom(P,\Rbar_{\geq 0})$ as local models. 

\begin{proposition}
\begin{enumerate}[(i)]
\item The reduction map $r\mathrel{\mathop:}\Sigmabar_F\rightarrow F$ is anti-continuous.
\item The structure map $\rho\mathrel{\mathop:}\Sigmabar_F\rightarrow F$ is continuous. 
\item There is a natural stratification 
\begin{equation*}
\bigsqcup_{x\in\Spec F}\rho^{-1}(x)\simeq\Sigmabar_F
\end{equation*}
of $\Sigmabar_F$ by locally closed subsets.
\end{enumerate}
\end{proposition}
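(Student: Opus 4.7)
The plan is to reduce all three claims to local computations on an open affine Kato cone $U = \Spec P \subseteq F$. By Proposition~\ref{prop_conecomplexreduction}, the preimage $r^{-1}(U)$ is identified with the extended cone $\sigmabar_U = \Hom(P, \Rbar_{\geq 0})$ equipped with the topology of pointwise convergence, and under this identification an element $u \in \sigmabar_U$ corresponds to a monoid homomorphism $\phi_u \colon P \to \Rbar_{\geq 0}$. Since $\Spec \Rbar_{\geq 0}$ has exactly three points, given by the prime ideals $\emptyset \subsetneq \{\infty\} \subsetneq \Rbar_{>0}$, unwinding the definitions gives $\rho(u) = \phi_u^{-1}(\{\infty\})$ and $r(u) = \phi_u^{-1}(\Rbar_{>0})$, both viewed as prime ideals of $P$, i.e.\ points of $U$.

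For parts (i) and (ii), I would verify the preimages of the basic opens $D(f) \subseteq U$ for $f \in P$. The evaluation map $\mathrm{ev}_f \colon \sigmabar_U \to \Rbar_{\geq 0}$ is continuous by definition of the pointwise-convergence topology, and a direct computation using the formulas above yields
\[
\rho^{-1}(D(f)) \cap \sigmabar_U \;=\; \mathrm{ev}_f^{-1}\bigl([0,\infty)\bigr), \qquad r^{-1}(D(f)) \cap \sigmabar_U \;=\; \mathrm{ev}_f^{-1}\bigl(\{0\}\bigr).
\]
Since $[0,\infty)$ is open and $\{0\}$ is closed in $\Rbar_{\geq 0}$, these preimages are respectively open and closed in $\sigmabar_U$. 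The charts $\sigmabar_U$ cover $\Sigmabar_F$ and the sets $D(f)$ generate the topology of $F$, so continuity of $\rho$ and anti-continuity of $r$ follow globally.

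For part (iii), the decomposition $\Sigmabar_F = \bigsqcup_{x \in F} \rho^{-1}(x)$ is immediate set-theoretically, since each $u$ has a unique image $\rho(u) \in F$. Each fiber is nonempty: for $x \in U$ corresponding to a prime $Q \subset P$, the assignment $\phi \colon P \to \Rbar_{\geq 0}$ with $\phi|_Q \equiv \infty$ and $\phi|_{P \smallsetminus Q} \equiv 0$ is a well-defined monoid homomorphism, precisely because $Q$ is prime so $P \smallsetminus Q$ is a submonoid, and it plainly satisfies $\rho(\phi) = x$. Local closedness of $\rho^{-1}(x)$ in $\Sigmabar_F$ then follows by combining continuity of $\rho$ from (ii) with the fact that every singleton in $F$ is locally closed: on each finite $T_0$ chart $U = \Spec P$, the set $\{x\}$ equals $\overline{\{x\}}$ minus the finite union of closures of the proper specializations of $x$, hence is locally closed in $U$ and therefore in $F$.

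No step here is genuinely hard. The only mildly delicate point is matching up the intrinsic descriptions of $r$ and $\rho$, as morphisms of sharply monoidal spaces out of $\Spec \Rbar_{\geq 0}$, with the concrete chart-wise formulas involving $\phi_u^{-1}$; once that correspondence is made explicit, everything reduces to elementary considerations with evaluation maps and the topology of $\Rbar_{\geq 0}$.
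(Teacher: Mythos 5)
Your overall strategy is sound, and the key local computations are correct. Matching the intrinsic definitions against the chart-wise description is exactly right: a point $u\in\sigmabar_U$ (for $U=\Spec P$ an open Kato subcone) is a monoid homomorphism $\phi_u\colon P\to\Rbar_{\geq 0}$, the three primes of $\Rbar_{\geq 0}$ are $\emptyset\subsetneq\{\infty\}\subsetneq\Rbar_{>0}$, and $\rho(u)=\phi_u^{-1}(\{\infty\})$, $r(u)=\phi_u^{-1}(\Rbar_{>0})$. Your formulas $\rho^{-1}(D(f))\cap\sigmabar_U=\mathrm{ev}_f^{-1}([0,\infty))$ and $r^{-1}(D(f))\cap\sigmabar_U=\mathrm{ev}_f^{-1}(\{0\})$ check out, and the deduction of (iii) from local closedness of singletons in a finite $T_0$ space, together with the explicit section showing nonemptiness of the fibers, is clean and correct.

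There is one small imprecision in the passage from the chart-wise computation to the global statement. The topology on $\Sigmabar_F$ is the weak topology with respect to the inclusions $\sigmabar_V\hookrightarrow\Sigmabar_F$, so to verify that $\rho^{-1}(D(f))$ is open (resp.\ $r^{-1}(D(f))$ is closed) you must check the intersection with \emph{every} chart $\sigmabar_V$, $V=\Spec Q$, not only with the $\sigmabar_U$ for which $f\in P$. This works, but requires a line of additional argument: since $r(u)\in V$ for $u\in\sigmabar_V$ and $\rho(u)$ is a generization of $r(u)$, one has $\rho(u),r(u)\in V$, so $\rho^{-1}(D(f))\cap\sigmabar_V=\rho^{-1}(D(f)\cap V)\cap\sigmabar_V$ and likewise for $r$. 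Now $D(f)\cap V$ is open in the finite $T_0$ space $V$, hence a \emph{finite} union $\bigcup_i D(g_i)$ with $g_i\in Q$, and you apply your computation to each $g_i$ in the chart $\sigmabar_V$. For $\rho$ this gives a union of opens; for $r$ this gives a \emph{finite} union of closeds, which is still closed. The finiteness is what rescues the anti-continuity argument, so it is worth stating. With this clarification, the proof is complete and correct.
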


Let $X$ be a Zariski logarithmic scheme that is logarithmically smooth over $\ChDan{\mathbf{k}}$ and denote by $\phi_X\mathrel{\mathop:}(X,\overline{\mathcal{O}}_X)\rightarrow F_X$ the \emph{characteristic morphism} into its Kato fan $F_X$. We write $\Sigma_X$ and $\overline{\Sigma}_X$ for the cone complex and the extended cone complex of $F_X$ respectively. 

Following \cite[Section 6.1]{Ulirsch_functroplogsch} one can define the \emph{tropicalization map} $\trop_X\rightarrow\overline{\Sigma}_X$ as follows: A point $x\in X^\beth$ can be represented by a morphism $\undernorm{x}\mathrel{\mathop:}\Spec R\rightarrow (X,\overline{\mathcal{O}}_X)$ for a valuation ring $R$ extending $\mathbf{k}$. Its image $\trop_X(x)$ in $\overline{\Sigma}_X=\Hom(\Spec\overline{\mathbb{R}}_{\geq 0},F_X)$ is defined to be the composition 
\begin{equation*}\begin{CD}
\Spec \overline{\mathbb{R}}_{\geq 0}@>\val^\#>>\Spec R @>\undernorm{x}>> (X,\overline{\mathcal{O}}_X)@>{\phi_X}>>F_X \ , 
\end{CD}\end{equation*}
where $\val^\#$ is the morphism $\Spec \overline{\mathbb{R}}_{\geq 0}\rightarrow\Spec R$ induced by the valuation $\val\mathrel{\mathop:}R\rightarrow\overline{\mathbb{R}}_{\geq 0}$ on $R$. 

\begin{proposition}[\cite{Ulirsch_functroplogsch} Proposition 6.2]\label{prop_tropprop}
\begin{enumerate}[(i)]
\item The tropicalization map is well-defined and continuous. It makes the diagrams
\begin{equation*}\begin{CD}
X^\beth @>\trop_X>>\overline{\Sigma}_X\\
@VrVV @VrVV\\
(X,\overline{\mathcal{O}}_X) @>\phi_X>> F_X
\end{CD}
\qquad
\begin{CD}
X^\beth @>\trop_X>>\overline{\Sigma}_X\\
@V\rho VV @V\rho VV\\
(X,\overline{\mathcal{O}}_X)@>\phi_X >>F_X
\end{CD}\end{equation*}
commute. 
\item A morphism $f\mathrel{\mathop:}X\rightarrow X'$ of Zariski logarithmic schemes, both logarithmically smooth and of finite type over $\ChDan{\mathbf{k}}$, induces a continuous map $\Sigmabar(f)\mathrel{\mathop:}\Sigmabar_X\rightarrow\Sigmabar_{X'}$ such that the diagram
\begin{equation*}\begin{CD}
X^\beth @>\trop_X >>\Sigmabar_X\\
@Vf^\beth VV @VV\Sigmabar(f)V\\
(X')^\beth @>\trop_{X'}>>\Sigmabar_{X'}
\end{CD}\end{equation*}
commutes. The association $f\mapsto\Sigmabar(f)$ is functorial in $f$. 
\end{enumerate}\end{proposition}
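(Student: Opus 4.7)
The first task is well-definedness of $\trop_X$. Given two representatives $(R, \undernorm{x})$ and $(S, \undernorm{x}')$ of the same point of $X^\beth$, pick a common valuation ring $\calO$ dominating both. The valuation on $\calO$ restricts to each of the valuations on $R$ and $S$, so the induced morphisms $\Spec \Rbar_{\geq 0} \to \Spec R$ and $\Spec \Rbar_{\geq 0} \to \Spec S$ coincide after composing into $\Spec \calO$. Composing further with $\phi_X$ shows the two versions of $\trop_X(x)$ agree as elements of $\Hom(\Spec \Rbar_{\geq 0}, F_X)$. Functoriality on morphisms of monoidal spaces then produces a genuine map of sets, independent of representatives.

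\textbf{Continuity.} I would verify this locally on $X^\beth$ and $\Sigmabar_X$. Working in a Zariski chart $U \to X$ that is small and admits a strict morphism to an affine toric scheme $V = \spec(P \to \mathbf{k}[P])$, both $F_X$ and $\Sigmabar_X$ are locally modeled on $\spec P$ and $\Hom_{\Mon}(P, \Rbar_{\geq 0})$ respectively, carrying the topology of pointwise convergence. Unravelling the definition, the restriction of $\trop_X$ to $U^\beth$ sends a point $x$ to the monoid homomorphism $p \mapsto -\log |\alpha(p)|_x$, precisely the toric tropicalization of Section~\ref{section_troptor}. Continuity then reduces to Proposition~\ref{prop_toricskeleton} (or its elementary verification for toric varieties), and one checks that the local maps glue on overlaps via functoriality of the characteristic morphism. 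For the two squares in (i), I would chase a point $x \in X^\beth$ with representative $\undernorm{x}: \spec R \to X$: the reduction $r(x)$ is the image of the closed point of $\spec R$, while $r(\trop_X(x))$ is $\trop_X(x)(\Rbar_{>0})$, which is the preimage of the closed point under $\val^\#$; commutativity of the square is then definitional. The $\rho$-square is analogous, using the generic point instead of the closed point.

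\textbf{Plan for (ii).} Functoriality of the Kato fan construction (the universal property in Theorem~\ref{thm:kato-univ-prop}) applied to the composition $X \to X' \to F_{X'}$ produces a unique morphism of Kato fans $F(f): F_X \to F_{X'}$ making everything commute with the characteristic morphisms. Applying the representable functor $\Hom(\spec \Rbar_{\geq 0}, -)$ then yields the map $\Sigmabar(f): \Sigmabar_X \to \Sigmabar_{X'}$. Continuity is inherited from the fact that $F(f)$ restricts on each affine subcone $\spec P' \subset F_{X'}$ to a monoid homomorphism $P' \to P$ for a suitable open subcone $\spec P$ of $F_X$, and pre-composition with such a map is continuous in the pointwise-convergence topology. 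Functoriality $\Sigmabar(g \circ f) = \Sigmabar(g) \circ \Sigmabar(f)$ is then formal: it follows from the functoriality of $F(\cdot)$ combined with that of $\Hom(\spec \Rbar_{\geq 0}, -)$. The commutativity of the square relating $f^\beth$ and $\Sigmabar(f)$ is checked pointwise by the representative $\undernorm{x}: \spec R \to X$: the outer rectangle factors as $\spec \Rbar_{\geq 0} \to \spec R \to X \to X' \to F_{X'}$, which equals both $\trop_{X'}(f^\beth(x))$ and $\Sigmabar(f)(\trop_X(x))$ by associativity of composition.

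\textbf{Main obstacle.} The routine steps above each reduce to careful bookkeeping. The one point that I expect to require the most care is verifying that the locally-defined tropicalization maps glue consistently across the different affine opens of $X$ (equivalently, that $\trop_X$ is continuous globally and not merely on each small chart). This amounts to showing that the identification of $\Sigmabar_X$ as a colimit of the extended cones $\sigmabar_U = \Hom(P, \Rbar_{\geq 0})$, as in Proposition~\ref{prop_conecomplexreduction}, is compatible with the analytic identifications arising from strict \'etale covers, which is essentially the content needed to promote the toric case to the logarithmically smooth case. Once this gluing is established, both parts of the proposition follow by essentially categorical manipulations.
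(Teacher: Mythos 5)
The paper itself does not prove Proposition~\ref{prop_tropprop}; it is quoted from the reference on functorial tropicalization and cited without argument. Evaluating your proposal on its own merits: the overall structure is sound and follows the natural route of (a) well-definedness via a dominating valuation ring, (b) local continuity via reduction to the toric case, (c) pointwise chases for the commutative squares, and (d) the universal property of the Kato fan for part~(ii). This is essentially the argument one expects from the cited reference, and no step you outline would fail.

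A few small cleanups are worth noting. In the $r$-square you write that $r(\trop_X(x))$ is ``the preimage of the closed point under $\val^\#$''; the correct statement is that it is the \emph{image} of the closed point $\Rbar_{>0}$ of $\Spec \Rbar_{\geq 0}$ under the composition $\phi_X\circ\undernorm{x}\circ\val^\#$, and the relevant computation is that $\val^\#$ sends $\Rbar_{>0}$ to $\val^{-1}(\Rbar_{>0})=\mathfrak{m}_R$, the closed point of $\Spec R$. Similarly for the $\rho$-square one uses $\val^{-1}(\{\infty\})=(0)$, the generic point. In the local continuity argument, the strict chart morphism $U\to\Spec(P\to\mathbf{k}[P])$ you invoke is strict by the definition of a chart (so the identification of $\trop_X\rest{U^\beth}$ with the toric tropicalization of Section~\ref{section_troptor} is precise), but the reduction to the toric case also implicitly uses that $F_X$ and $\Sigmabar_X$ restricted over the small Zariski open $U$ are canonically $\Spec P$ and $\Hom_{\Mon}(P,\Rbar_{\geq 0})$; this is indeed how the associated Kato fan and its extended cone complex are built (Section~\ref{Sec:associated-Kato} and Proposition~\ref{prop_conecomplexreduction}), so there is no gap, only a sentence worth making explicit. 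Finally, in part~(ii), the map $F(f)\colon F_X\to F_{X'}$ is obtained by applying the universal property of Theorem~\ref{thm:kato-univ-prop} to the composite morphism of monoidal spaces $(X,\onM_X)\to(X',\onM_{X'})\to F_{X'}$; this does require $f$ to be a morphism of \emph{logarithmic} schemes so that it induces a sharp morphism on characteristic sheaves, which is exactly the hypothesis given, so the argument goes through.

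Your flagged ``main obstacle'' --- compatibility of the local identifications over overlaps --- is indeed where the real verification lies; it amounts to checking that both $\Sigmabar_X$ and the locally defined toric tropicalizations transform under restriction by precomposition with the face maps of Proposition~\ref{prop_conecomplexreduction}. Once that compatibility is pinned down, everything else in your plan is bookkeeping, as you say.
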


\begin{corollary}[Strata-cone correspondence, \cite{Ulirsch_tropcomplogreg} Corollary 3.5]\label{cor_stratacone}
There is an order-reversing one-to-one correspondence between the cones in $\Sigma_X$ and the strata of $X$. Explicitly it is given by 
\begin{equation*}
\mathring{\sigma}\longmapsto r\big(\trop_X^{-1}(\mathring{\sigma})\big)
\end{equation*}
for a a relatively open cone $\mathring{\sigma}\subseteq\Sigma_X$ and
\begin{equation*}
E\longmapsto \trop_X\big(r^{-1}(E)\cap X_0^{an}\big)
\end{equation*}
for a stratum $E$ of $X$. 
\end{corollary}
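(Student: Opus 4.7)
The plan is to establish the correspondence by composing three elementary bijections and then matching them with the explicit formulas using the commutative diagrams of Proposition~\ref{prop_tropprop}. The three bijections are: (a) strata of $X$ $\leftrightarrow$ points of $F_X$, built into the construction of the Kato fan in Section~\ref{Sec:associated-Kato}, with the stratum associated to $x \in F_X$ being $E_x = \phi_X^{-1}(x)$; (b) points of $F_X$ $\leftrightarrow$ minimal open Kato subcones, sending $x$ to its smallest open affine neighborhood $U_x$ (as in the lemma on points and Kato cones in Section~\ref{sec:kato}); and (c) open Kato subcones $\leftrightarrow$ cones of $\Sigma_X$, given by Proposition~\ref{prop_conecomplexreduction}, with the relative interior $\mathring{\sigma}_{U_x}$ identified with $r_\Sigma^{-1}(x)$ inside $\Sigmabar_X$.

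To match the formulas in the statement, I would apply the first commutative square of Proposition~\ref{prop_tropprop}(i), namely $\phi_X \circ r_X = r_\Sigma \circ \trop_X$. Taking preimages of a point $x \in F_X$ gives
\[
\trop_X^{-1}(\mathring{\sigma}_{U_x}) \;=\; \trop_X^{-1}\bigl(r_\Sigma^{-1}(x)\bigr) \;=\; r_X^{-1}\bigl(\phi_X^{-1}(x)\bigr) \;=\; r_X^{-1}(E_x).
\]
Applying $r_X$ and using the standard surjectivity of the reduction map onto each scheme-theoretic point (via a valuation ring dominating its local ring) yields $r_X\bigl(\trop_X^{-1}(\mathring{\sigma}_{U_x})\bigr) = E_x$, which is the first formula. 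For the converse formula, the same commutative diagram shows $\trop_X\bigl(r_X^{-1}(E_x) \cap X_0^{an}\bigr) \subseteq r_\Sigma^{-1}(x) \cap \Sigma_X = \mathring{\sigma}_{U_x}$, where the intersection with $X_0^{an}$ ensures that tropicalization lands in the finite part $\Sigma_X$ rather than in $\Sigmabar_X \smallsetminus \Sigma_X$. The reverse inclusion amounts to showing that every $u \in \mathring{\sigma}_{U_x} = \Hom(\onM_{X,x}, \R_{>0})$ arises as the tropicalization of some analytic point of $X_0^{an}$ reducing to a point of $E_x$; this I would establish by lifting $u$ to a valuation on a suitable extension of the residue field of $x$.

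The order-reversing assertion then drops out of the topology of $F_X$: the specialization relation $x \in \overline{\{y\}}$ in $F_X$ is equivalent to the containment $U_y \supsetneq U_x$ of minimal open subcones and hence, by Proposition~\ref{prop_conecomplexreduction}(i), to the face relation $\sigma_{U_y} \prec \sigma_{U_x}$ in $\Sigma_X$; on the strata side the same specialization means $E_x \subseteq \overline{E_y}$. I expect the main obstacle to be the surjectivity in the converse formula: one must produce enough analytic points of $X_0^{an}$ realizing arbitrarily prescribed tropicalizations in $\mathring{\sigma}_{U_x}$, which in general (for irrational $u$) requires passing to a nontrivially valued extension of $\mathbf{k}$ and constructing a valuation ring whose restriction to the characteristic monoid matches $u$.
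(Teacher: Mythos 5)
Your proposal takes essentially the same route as the paper: the one-sentence proof in the paper invokes exactly the same three ingredients you use — the commutative square $\phi_X \circ r = r \circ \trop_X$ from Proposition~\ref{prop_tropprop}, the fiber identification $\mathring{\sigma}_U = r^{-1}(x)$ from Proposition~\ref{prop_conecomplexreduction}, and the fact that $\phi_X$ collapses each stratum to its generic point — and concludes by unwinding these. You spell out what the paper calls ``an immediate consequence,'' and in particular you correctly flag that the surjectivity of $r_X$ restricted to a tropical fiber (needed for both directions of the correspondence) requires constructing valuation rings over nontrivially valued extensions to realize arbitrary, possibly irrational, interior cone points; the paper silently absorbs this into ``immediate,'' so your extra care is warranted rather than a detour.
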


\begin{proof}
This is an immediate consequence of the commutativity of 
\begin{equation*}\begin{CD}
X^\beth @>\trop_X>>\overline{\Sigma}_X\\
@Vr VV @Vr VV\\
X@>\phi_X >>F_X 
\end{CD}\end{equation*} 
from Proposition \ref{prop_tropprop}, Proposition \ref{prop_conecomplexreduction} and the fact that $\phi_X$ sends every point in a stratum $E=E(\xi)$ to its generic point $\xi$. 
\end{proof}

\begin{corollary}[\cite{Ulirsch_tropcomplogreg} Corollary 3.6]
The tropicalization map induces a continuous map $X^\beth\cap X_0^{an}\rightarrow\Sigma_F$. 
\end{corollary}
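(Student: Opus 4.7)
The plan is to deduce the statement directly from the preceding strata-cone correspondence (Corollary~\ref{cor_stratacone}) together with the continuity of $\trop_X$ from Proposition~\ref{prop_tropprop}(i).

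First, for the set-theoretic containment $\trop_X(X^\beth \cap X_0^{an}) \subseteq \Sigma_F$, I would exploit the fact that the reduction map $r \colon X^\beth \to X$ partitions $X^\beth$ into the preimages of the strata of $X$:
\[
X^\beth = \bigsqcup_E r^{-1}(E), \qquad X^\beth \cap X_0^{an} = \bigsqcup_E \bigl(r^{-1}(E) \cap X_0^{an}\bigr),
\]
where $E$ runs over the strata of $X$. By the explicit formula in Corollary~\ref{cor_stratacone}, each piece $r^{-1}(E) \cap X_0^{an}$ maps under $\trop_X$ into the relatively open cone $\mathring{\sigma}_E$, which is a subset of $\Sigma_F$. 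Taking the union over all strata,
\[
\trop_X\bigl(X^\beth \cap X_0^{an}\bigr) \subseteq \bigcup_E \mathring{\sigma}_E \subseteq \Sigma_F,
\]
so the tropicalization map indeed factors through $\Sigma_F \subset \Sigmabar_F$.

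For continuity, the restriction of $\trop_X$ to the locally closed subset $X^\beth \cap X_0^{an}$ is continuous with target $\Sigmabar_F$. It remains to observe that $\Sigma_F$ carries the subspace topology from $\Sigmabar_F$: on each open affine $U = \Spec P$, the cone $\sigma_U = \Hom(P, \R_{\geq 0})$ sits inside $\sigmabar_U = \Hom(P, \Rbar_{\geq 0})$ as the subspace of homomorphisms avoiding the value $\infty$, and the pointwise convergence topology on $\sigmabar_U$ restricts to the standard Euclidean topology on $\sigma_U$. These local identifications glue to show that $\Sigma_F \hookrightarrow \Sigmabar_F$ is a topological embedding, whence the induced map $X^\beth \cap X_0^{an} \to \Sigma_F$ is continuous.

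The main potential obstacle is purely conceptual: one must recognize that landing in $\Sigma_F$ (rather than the strictly larger $\Sigmabar_F$) is precisely the statement that $\trop_X(x)$ never takes the value $\infty$ on any characteristic element, which in turn reflects the fact that $x \in X_0^{an}$ pulls every section of $\overline{\cM}_X$ back to a \emph{unit} in the fraction field of the valuation ring representing $x$. This is exactly the content already packaged in the strata-cone correspondence, so no genuine additional work is needed beyond invoking Corollary~\ref{cor_stratacone}.
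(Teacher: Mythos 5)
Your proof is correct, but it takes a genuinely different (and slightly more roundabout) route than the paper's.

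The paper invokes the \emph{other} commutative square from Proposition~\ref{prop_tropprop}(i), the one involving the structure map $\rho$ rather than the reduction map $r$:
\begin{equation*}\begin{CD}
X^\beth @>\trop_X>>\overline{\Sigma}_X\\
@V\rho VV @V\rho VV\\
(X,\overline{\mathcal{O}}_X)@>\phi_X >>F_X .
\end{CD}\end{equation*}
One then simply observes that $\rho^{-1}(X_0) = X^\beth \cap X_0^{an}$ on the left and $\rho^{-1}(X_0 \cap F_X) = \Sigma_F$ on the right, so the set-theoretic containment follows in one step from the fact that $\phi_X$ carries $X_0$ into the open point $X_0 \cap F_X$ of the fan. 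No stratum-by-stratum decomposition is needed. You instead route the argument through the strata-cone correspondence (Corollary~\ref{cor_stratacone}), which itself was established from the $r$-square; that correspondence tells you that $\trop_X\bigl(r^{-1}(E) \cap X_0^{an}\bigr)$ is a relatively open cone of $\Sigma_F$ for each stratum $E$, and summing over $E$ gives the containment. This is valid and not circular, since neither corollary cites the other, but it is heavier machinery than necessary: you are using the full bijection where only the trivial open stratum matters. The $\rho$-diagram approach in the paper isolates precisely that one fact. Your continuity argument (that $\Sigma_F \hookrightarrow \Sigmabar_F$ is a topological embedding, so restricting $\trop_X$ keeps continuity) is the same as what the paper implicitly uses.
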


\begin{proof}
This follows from the commutativity of 
\begin{equation*}\begin{CD}
X^\beth @>\trop_X>>\overline{\Sigma}_X\\
@V\rho VV @V\rho VV\\
(X,\overline{\mathcal{O}}_X)@>\phi_X >>F_X
\end{CD}\end{equation*}
and the observations that $\rho^{-1}(X_0)=X^\beth\cap X^{an}$ as well as $\rho^{-1}(X_0\cap F_X)=\Sigma_F$. 
\end{proof}

\subsubsection{\'Etale logarithmic schemes}\label{section_troplog-E}
Let $X$ be an \'etale logarithmic scheme that is logarithmically smooth over $\ChDan{\mathbf{k}}$. We can define the \emph{generalized extended cone complex} associated to $X$ as the colimit of all $\Sigmabar_{X'}$ taken over all strict \'etale morphisms $X'\rightarrow X$ from a Zariski logarithmic scheme $X'$. The tropicalization map $\trop_X\mathrel{\mathop:}X^\beth\rightarrow \Sigmabar_X$ is induced by the universal property of colimits. 

In analogy with Proposition \ref{prop_toricskeleton} we have the following compatibility result stating that $\bfp_X$ and $\trop_X$ are equal up to a natural homeomorphism. 

\begin{theorem}[\cite{Ulirsch_functroplogsch} Theorem 1.2]\label{thm_trop=skel}
Suppose that $X$ is logarithmically smooth over $\ChDan{\mathbf{k}}$. There is a natural homeomorphism $J_X\mathrel{\mathop:}\Sigmabar_X\xrightarrow{\sim}\frakS(X)$ making the diagram
\begin{center}\begin{tikzpicture}
  \matrix (m) [matrix of math nodes,row sep=2em,column sep=3em,minimum width=2em]
  {  
  & X^\beth & \\ 
 \frakS(X)  & & \Sigmabar_X  \\ 
  };
  \path[-stealth]
    (m-1-2) edge node [above left] {$\bfp_X$} (m-2-1)
    		edge node [above right] {$\trop_X$} (m-2-3)
    (m-2-3) edge node [below] {$J_X$} node [above] {$\sim$} (m-2-1);		
\end{tikzpicture}\end{center}
commute. 
\end{theorem}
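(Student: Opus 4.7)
The plan is to bootstrap from the toric case (Proposition \ref{prop_toricskeleton}) to the general logarithmically smooth case by covering $X$ with local toric models and invoking the functoriality properties of both $\bfp_X$ and $\trop_X$.

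First I would treat the toric case directly. If $X = Z$ is a toric variety associated to a fan $\Delta$, then the Kato fan $F_Z$ encodes $\Delta$ and the extended cone complex $\Sigmabar_Z$ is canonically the image of $N_\R(\Delta)$-bounded part inside $N_\R(\Delta)$, i.e.\ the closure $\overbar{\Delta}$ discussed at the end of Section~\ref{section_troptor}. Proposition~\ref{prop_toricskeleton}, applied in the trivially valued setting and restricted from $X^{an}$ to $X^\beth$, provides a continuous section $J_Z$ of $\trop_Z$ whose image is exactly the skeleton $\frakS(Z)$. One checks directly that $J_Z$ is a homeomorphism onto $\frakS(Z)$ (it is a continuous bijection between compact Hausdorff local models, and the explicit formula in the proof of Proposition~\ref{prop_toricskeleton} shows its inverse is continuous), and the equality $\bfp_Z = J_Z \circ \trop_Z$ is essentially the definition of $\bfp_Z$ given there. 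This establishes the theorem when $X$ is toric.

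Next I would reduce to the small Zariski case. By Proposition~\ref{prop_skeletonprop}(iii), $\frakS(X)$ is the colimit of $\frakS(U)$ taken over strict étale morphisms $U \to X$ with $U$ small Zariski. By construction (Section~\ref{section_troplog-E}), $\Sigmabar_X$ is the analogous colimit of $\Sigmabar_U$ over the same diagram, and both $\bfp_X$ and $\trop_X$ are induced by the universal property. It therefore suffices to construct $J_U : \Sigmabar_U \xrightarrow{\sim} \frakS(U)$ for each small Zariski $U$, to verify that $\bfp_U = J_U \circ \trop_U$, and to check that $J_U$ is functorial along strict étale transitions so the local homeomorphisms assemble.

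For a small Zariski logarithmically smooth $U$, Kato's structure theorem (Theorem~\ref{KatoStrThm}) produces a strict étale morphism $\gamma : U \to Z$ to a small toric variety $Z$. Proposition~\ref{prop_skeletonprop}(ii) gives a homeomorphism $\gamma^\frakS : \frakS(U) \xrightarrow{\sim} \frakS(Z)$, and strictness of $\gamma$ gives a canonical isomorphism $F_U \cong F_Z$ of Kato fans and hence a canonical homeomorphism $\Sigmabar(\gamma) : \Sigmabar_U \xrightarrow{\sim} \Sigmabar_Z$. Moreover by Proposition~\ref{prop_tropprop}(ii), $\Sigmabar(\gamma) \circ \trop_U = \trop_Z \circ \gamma^\beth$. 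I would then define
\[
J_U \;:=\; (\gamma^\frakS)^{-1} \circ J_Z \circ \Sigmabar(\gamma),
\]
which is a homeomorphism, and the identity $\bfp_U = J_U \circ \trop_U$ follows by combining the toric case, Proposition~\ref{prop_skeletonprop}(ii) (which says $\gamma^\beth$ intertwines $\bfp_U$ and $\bfp_Z$ on skeletons, and in particular $\gamma^\frakS \circ \bfp_U = \bfp_Z \circ \gamma^\beth$), and the compatibility of $\trop$ with $\gamma^\beth$.

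The main obstacle will be independence of the chart $\gamma$ and compatibility of the resulting $J_U$ along a strict étale morphism $U' \to U$ of small Zariski models. To handle this, I would argue that any two toric charts $\gamma, \gamma'$ can be compared by passing to a common refinement $U'' \to U$ that admits a compatible chart, and then apply the toric functoriality of both $J_Z$ (expressed by the naturality of the continuous section $J_\Delta$ with respect to maps of fans) and of the identifications in Propositions~\ref{prop_skeletonprop}(i),(ii) and~\ref{prop_tropprop}(ii). Since both $\frakS$ and $\Sigmabar$ are functorial for strict étale morphisms and the local $J$'s are defined in terms of toric data depending only on the Kato fan, the cocycle condition needed for gluing is automatic. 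Taking the colimit then produces the global $J_X$ with the required properties, and the commutativity of the triangle follows because it is verified on each cell of the colimit.
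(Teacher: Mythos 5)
The survey does not actually prove Theorem~\ref{thm_trop=skel}; it is quoted directly from the cited reference with no argument supplied, so there is no in-paper proof to compare against. That said, your outline is the natural strategy and presumably tracks the cited source: establish the identity on toric models, then glue via the colimit presentations of $\frakS(X)$ and $\Sigmabar_X$ over strict \'etale maps from small Zariski models. Your reduction steps and the computation that
$\bfp_U = J_U \circ \trop_U$ for $J_U := (\gamma^\frakS)^{-1} \circ J_Z \circ \Sigmabar(\gamma)$
check out against Propositions~\ref{prop_skeletonprop}, \ref{prop_tropprop} and \ref{prop_toricskeleton}.

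The one genuinely underspecified step is the independence of $J_U$ from the choice of toric chart $\gamma$ and the resulting cocycle condition. You propose ``passing to a common refinement,'' but for two strict \'etale morphisms $\gamma_i : U \to Z_i$ to different toric targets there is no obvious common toric refinement, and your remark that the local $J$'s ``depend only on the Kato fan'' is not literally true: the explicit formula for $J_\sigma$ in the proof of Proposition~\ref{prop_toricskeleton} is written in terms of the coordinate ring $\mathbf{k}[S_\sigma]$, so a priori it depends on the toric model, not merely on $F_U$. The clean way to close this gap is by uniqueness. In the toric case $J_Z$ is a two-sided inverse of $\trop_Z\rest{\frakS(Z)}$, so $\trop_Z\rest{\frakS(Z)} : \frakS(Z) \to \Sigmabar_Z$ is a homeomorphism; transporting along the homeomorphisms $\gamma^\frakS$ and $\Sigmabar(\gamma)$ furnished by Propositions~\ref{prop_skeletonprop}(ii) and~\ref{prop_tropprop}(ii) shows $\trop_U\rest{\frakS(U)}$ is likewise a homeomorphism, and $J_U$ is forced to be its inverse regardless of which $\gamma$ produced it. With this intrinsic characterization of $J_U$ in hand, well-definedness and the compatibilities needed to pass to the colimit become automatic, and your argument goes through.
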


Moreover, we obtain the following Corollary of Proposition \ref{prop_tropprop} (ii).

\begin{corollary}[\cite{Ulirsch_functroplogsch} Theorem 1.1]\label{cor_tropfunc}
A morphism $f\mathrel{\mathop:}X\rightarrow X'$ of logarithmic schemes, logarithmically smooth and of finite type over $\ChDan{\mathbf{k}}$, induces a continuous map $\Sigmabar(f)\mathrel{\mathop:}\Sigmabar_X\rightarrow\Sigmabar_{X'}$ such that the natural diagram
\begin{equation*}\begin{CD}
X^\beth @>\trop_X>>\overline{\Sigma}_X\\
@Vf^\beth VV @VV\Sigmabar(f) V\\
(X')^\beth @>\trop_{X'}>> \overline{\Sigma}_{X'} 
\end{CD}\end{equation*}
is commutative. The association $f\mapsto \Sigmabar(f)$ is functorial in $f$.
\end{corollary}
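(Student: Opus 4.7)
The plan is to bootstrap from the Zariski case (Proposition~\ref{prop_tropprop}(ii)) via the presentations of $\Sigmabar_X$ and $\Sigmabar_{X'}$ as colimits over strict \'etale covers by Zariski logarithmic schemes, as in Proposition~\ref{prop_skeletonprop}(iii) combined with Theorem~\ref{thm_trop=skel}. First I would choose a strict \'etale cover $\{U'_j \to X'\}$ by small Zariski logarithmic schemes and, for each $j$, pull back to obtain a strict \'etale cover of $X$ by $U'_j \times_{X'} X$, which I further refine by strict \'etale covers $\{U_{ij} \to U'_j \times_{X'} X\}$ from small Zariski logarithmic schemes. By construction each composition $f_{ij} : U_{ij} \to U'_j$ is a morphism of Zariski logarithmic schemes that are logarithmically smooth over $\mathbf{k}$, and one has a commutative diagram of underlying morphisms
\[
\xymatrix{
U_{ij} \ar[r] \ar[d]_{f_{ij}} & X \ar[d]^{f} \\
U'_{j} \ar[r] & X' .
}
\]

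Next I would apply Proposition~\ref{prop_tropprop}(ii) to each $f_{ij}$ to obtain continuous maps $\Sigmabar(f_{ij}) : \Sigmabar_{U_{ij}} \to \Sigmabar_{U'_j}$, fitting into commutative squares with the tropicalization maps $\trop_{U_{ij}}$ and $\trop_{U'_j}$. To assemble these into a single map on the colimits, I need to check compatibility under refinement of the covers: if $U_{ij} \to V_{kl}$ is a strict \'etale morphism between two such choices lying over a common refinement upstairs and downstairs, then the two maps to $\Sigmabar_{U'_j}$ (respectively $\Sigmabar_{V'_l}$) produced by Proposition~\ref{prop_tropprop}(ii) must agree after composition with the natural structure maps of the colimit. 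This follows from the functoriality of $\Sigmabar$ in the Zariski case, applied to the strict \'etale morphisms in the cover diagrams and to the identification of skeletons under strict \'etale morphisms in Proposition~\ref{prop_skeletonprop}(ii). The universal property of the colimit then gives a unique continuous map $\Sigmabar(f) : \Sigmabar_X \to \Sigmabar_{X'}$.

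To check commutativity of the diagram in the statement, I would use that $\trop_X$ and $\trop_{X'}$ are themselves induced from the Zariski tropicalization maps by the universal property of the same colimits: for any strict \'etale morphism $U_{ij} \to X$ with $U_{ij}$ small Zariski, the analytification $U_{ij}^\beth$ maps compatibly to $X^\beth$, and the squares in Proposition~\ref{prop_tropprop}(ii) assemble under colimit. Concretely, given $x \in X^\beth$, represent it by some $\undernorm{x} : \Spec R \to X$ which, after possibly passing to a valuation-ring extension, lifts to one of the $U_{ij}$; the commutativity then reduces to the single Zariski square for $f_{ij}$. Functoriality $(g\circ f)^{\Sigmabar} = g^{\Sigmabar} \circ f^{\Sigmabar}$ follows by the same universal-property argument, together with the functoriality in the Zariski case.

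The main obstacle is the well-definedness under change of cover: one must verify that any two systems of choices $\{U_{ij} \to U'_j\}$ and $\{V_{kl} \to V'_l\}$ produce the same $\Sigmabar(f)$, which requires producing common refinements of both covers simultaneously on $X$ and on $X'$. This is standard once one observes that small Zariski logarithmic schemes form a cofinal system of strict \'etale neighborhoods; the strata--cone correspondence of Corollary~\ref{cor_stratacone} guarantees that the cones attached to small Zariski charts depend only on the underlying geometric point of $X$, so different choices give canonically identified pieces of the colimit. Once this is in place the continuity of $\Sigmabar(f)$ is automatic from the colimit topology, since each $\Sigmabar(f_{ij})$ is continuous by the Zariski case.
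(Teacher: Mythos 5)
Your proposal fleshes out exactly what the paper intends: reduce to the Zariski case of Proposition~\ref{prop_tropprop}(ii) by passing to the colimit over strict \'etale covers by small Zariski logarithmic schemes, using cofinality and the Zariski functoriality to get compatibility. This is precisely the route the paper takes when it declares the statement ``a Corollary of Proposition~\ref{prop_tropprop}(ii)'' after defining $\Sigmabar_X$ as the colimit over strict \'etale Zariski neighborhoods, so your approach matches the paper's.
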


\section{Analytification of Artin fans}
\label{sec:anal}

\subsection{Analytification of Artin fans}

In Section \ref{section_troplog} we have seen that the extended cone complex $\Sigmabar_F$ associated to a Kato fan $F$ has topological properties analogous to the non-Archimedean analytic space $X^\beth$ associated to a scheme $X$ of finite type over $\ChDan{\mathbf{k}}$. Moreover, if $X$ is a Zariski logarithmic scheme that is logarithmically smooth over $k$, the tropicalization map $\trop_X\mathrel{\mathop:}X^\beth\rightarrow \Sigmabar_X$ is the ``analytification" of the characteristic morphism $\phi_X\mathrel{\mathop:}(X,\calO_X)\rightarrow F_X$. Using the theory of Artin fans we can make this analogy more precise, and even generalize the construction of $\trop_X$ to all logarithmic schemes. 

Let $\ChDan{\mathbf{k}}$ be endowed with the trivial absolute \ChDan{value}. As explained in \cite[Section V.3]{Ulirsch_Artinfanstrop} the $(.)^\beth$-functor, originally constructed in \cite{Thuillier}, generalizes to a pseudofunctor from the $2$-category of algebraic stacks locally of finite type over $\ChDan{\mathbf{k}}$ into the category of non-Archimedean analytic stacks, such that whenever $[U/R]\simeq \calX$ is a groupoid presentation of an algebraic stack $\calX$ we have an natural equivalence $[U^\beth/R^\beth]\simeq\calX^\beth$. We refer the reader to \cite{PortaYu_higherGAGA}, \cite{Ulirsch_nonArchstacks} , and  \cite[Section 6]{Yu_Gromovcompactness} for background on the theory of non-Archimedean analytic stacks. 

Let $\calX$ be an algebraic stack locally of finite type over $\ChDan{\mathbf{k}}$. Then the underlying topological space $\vert \calX^\beth\vert$ of the analytic stack $\calX$ can be identified with the set of equivalence \ChDan{classes} of pairs $(R,\phi)$ consisting of a valuation ring $R$ extending $\ChDan{\mathbf{k}}$ and a morphism $\phi\mathrel{\mathop:}\Spec R\rightarrow \calX$. Two such pairs $(\phi,R)$ and $(\psi,S)$ are said to be \emph{equivalent}, if there is a valuation ring $\calO$ extending both $R$ and $S$ such that the diagram 
\begin{equation*}\begin{CD}
\Spec \calO @>>>\Spec S\\
@VVV @VV\psi V\\
\Spec R@>>\phi> X
\end{CD}\end{equation*}
is $2$-commutative. The topology on $\vert\calX^\beth\vert$ is the coarsest making all maps $\vert U^\beth\vert\rightarrow\vert\calX^\beth\vert$ induced by surjective flat morphisms $U\rightarrow\calX$ from a scheme $U$ locally of finite type over $\ChDan{\mathbf{k}}$ onto $X$ into a topological quotient map. 

Suppose that $X$ is a logarithmic scheme that is logarithmically smooth and of finite type over $\ChDan{\mathbf{k}}$. Consider the natural strict morphism $X\rightarrow\calA_X$ into the Artin fan associated to $X$ as constructed in Section \ref{sec:artin}.

\begin{theorem}[\cite{Ulirsch_Artinfanstrop}]\label{thm_tropicalization=analytification}
There is a natural homeomorphism 
\begin{equation*}
\mu_X\mathrel{\mathop:}\big\vert\calA_X^\beth\big\vert\xrightarrow{\sim}\Sigmabar_X
\end{equation*} 
that makes the diagram
\begin{center}\begin{tikzpicture}
  \matrix (m) [matrix of math nodes,row sep=2em,column sep=3em,minimum width=2em]
  {  
  & X^\beth & \\ 
  \big\vert\calA_X^\beth\big\vert  & & \Sigmabar_X  \\ 
  };
  \path[-stealth]
    (m-1-2) edge node [above left] {$\phi_X^\beth$} (m-2-1)
    		edge node [above right] {$\trop_X$} (m-2-3)
    (m-2-1) edge node [below] {$\mu_X$} node [above] {$\sim$} (m-2-3);		
\end{tikzpicture}\end{center}
commute. 
\end{theorem}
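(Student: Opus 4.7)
The plan is to establish the theorem first for Artin cones, where both sides of $\mu$ admit explicit computations, and then to extend to general $X$ by \'etale descent, exploiting the local construction of both $\calA_X$ and $\Sigmabar_X$ via small charts.

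For the Artin cone case, fix a fine, saturated, sharp monoid $N$ and consider $\calA_N=[\Spec \mathbf{k}[N]/T]$ with $T=\Spec \mathbf{k}[N^{gp}]$. A point of $|\calA_N^\beth|$ is an equivalence class of pairs $(R,\phi)$ with $R$ a valuation ring extending $\mathbf{k}$ and $\phi:\Spec R\to\calA_N$; by the universal property of $\calA_N$ recalled in Example \ref{ex:artin-cones}, such a $\phi$ is precisely a monoid homomorphism $N\to\Gamma(\Spec R,\onM_{\Spec R})$, and the target is the value monoid $(R\setminus\{0\})/R^{*}\cup\{\infty\}$ of $R$. Passing to a valuation extension whose value group embeds into $\mathbb{R}$ identifies every equivalence class with a unique homomorphism in $\Hom_{\Mon}(N,\Rbar_{\geq 0})=\sigmabar_N$, giving a set-theoretic bijection $\mu_{\calA_N}:|\calA_N^\beth|\xrightarrow{\sim}\sigmabar_N$. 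To promote this to a homeomorphism I would use that $|\calA_N^\beth|$ is the quotient of $V^\beth$ (for $V=\Spec \mathbf{k}[N]$) by the $T^\beth$-action, while the classical toric tropicalization $V^\beth\to\sigmabar_N$ of Section \ref{section_troptor} is a continuous surjection whose fibres are exactly these $T^\beth$-orbits; the universal property of the topological quotient then delivers both continuity and its inverse. Commutativity of the triangle in the Artin cone case is tautological from the explicit formulas: both $\phi_V^\beth$ and $\trop_V$ send $x\in V^\beth$ to the homomorphism $n\mapsto -\log|\chi^n|_x$.

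I would then globalize via \'etale descent. Choose a strict \'etale cover $\{U_i\}$ of $X$ by small Zariski logarithmic schemes (Definition \ref{definition_smalllogsch}), each equipped with a strict \'etale morphism $\gamma_i:U_i\to V_i=\Spec \mathbf{k}[N_i]$ provided by Theorem \ref{KatoStrThm}. By the construction of Section \ref{sec:artinfanofascheme}, $\calA_X$ is the colimit, as an \'etale stack over $\Log$, of the Artin cones $\calA_{N_i}$ along the groupoid relation defined by the cover; analogously, by Proposition \ref{prop_skeletonprop} together with the construction in Section \ref{section_troplog-E}, $\Sigmabar_X$ is the colimit of the extended cones $\sigmabar_{N_i}$ along the same combinatorial gluing data. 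Functoriality of $\mu_{\calA_N}$ with respect to monoid homomorphisms (equivalently, to strict morphisms of Artin cones) guarantees compatibility with all transition data, so the local $\mu_{\calA_{N_i}}$ assemble into the desired $\mu_X$. Commutativity of the global triangle then follows from the Artin cone case by checking it on each $U_i^\beth$.

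The main technical obstacle lies in the descent step: one must verify that $(-)^\beth$ applied to the \'etale-stacky colimit defining $\calA_X$ recovers, at the level of underlying topological spaces, the corresponding topological colimit. This is exactly what is guaranteed by the pseudofunctoriality of analytification of algebraic stacks established in \cite{Ulirsch_Artinfanstrop}, via the equivalence $[U^\beth/R^\beth]\simeq\calX^\beth$ for a groupoid presentation $[U/R]\simeq\calX$, combined with surjectivity of $X^\beth\to|\calA_X^\beth|$, which reduces the comparison of topologies to a statement about fibres that was already settled in the Artin cone case. A subsidiary delicacy is the identification, in that local step, of fibres of tropicalization with $T^\beth$-orbits over a trivially valued base, which is handled by the explicit analysis summarized in Proposition \ref{prop_toricskeleton}.
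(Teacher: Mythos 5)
The paper does not actually prove this theorem; it cites it to \cite{Ulirsch_Artinfanstrop} and only sketches the strategy in the surrounding text (notably the remarks after Theorem~\ref{thm_trop=quot}). Your overall architecture --- reduce to Artin cones via the quotient presentation $\calA_N = [V/T]$, identify $\trop_V$ with the topological quotient $V^\beth \to |\calA_N^\beth| = |[V^\beth/T^\beth]|$, then globalize by \'etale colimits using the pseudofunctoriality of $(\cdot)^\beth$ --- is the right one and matches what the paper indicates. So the route is correct in broad strokes, but two steps are asserted where they need to be proved.

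First, the identification of $|\calA_N^\beth|$ with $\sigmabar_N = \Hom_{\Mon}(N,\Rbar_{\geq 0})$ is not a matter of ``passing to a valuation extension whose value group embeds into $\R$.'' A pair $(R,\phi)$ can have a valuation ring $R$ of arbitrary rank, and what is needed is the opposite move: one replaces $R$ by a rank-one \emph{coarsening} $R_1 \supset R$ (a localization), which is what witnesses equivalence. Even then one must check that the resulting $\Rbar_{\geq 0}$-valued homomorphism is independent of all choices, which is essentially the content of the comparison between the valuation-ring and seminorm descriptions of $(\cdot)^\beth$ and should be spelled out rather than absorbed into a clause.

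Second, the claim that the fibres of $\trop_V : V^\beth \to \sigmabar_N$ coincide with the equivalence classes of the groupoid $T^\beth \times V^\beth \rightrightarrows V^\beth$ is the real content of the Artin cone case, and Proposition~\ref{prop_toricskeleton} does not deliver it: that proposition only says $\bfp_\Delta$ is a strong deformation retraction, not that the fibres of the retraction are $T^\beth$-orbits. What one needs is the identification of the skeleton $\frakS(V)$ with the locus of $T^\circ$-invariant points of $V^{an}$, which is precisely the step the paper flags as the core of the proof of Theorem~\ref{thm_trop=quot}. Without this, the ``universal property of the topological quotient'' has nothing to bite on. Finally, in the globalization step you describe $\Sigmabar_X$ as a colimit of extended cones along ``the same combinatorial gluing data''; when the logarithmic structure has monodromy (cf.\ the Whitney umbrella of Section~\ref{sec:cone-stack-motivation}) the groupoid transition maps involve nontrivial automorphisms of cones, so the colimit is a genuine stacky quotient and the compatibility check must be done at the groupoid level, not merely cone by cone.
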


So, by applying the functor $(.)^\beth$ to the morphism $X\rightarrow \calA_X$ we obtain the tropicalization map on the underlying topological spaces. Note that this construction also works for \'etale logarithmic schemes and we do not have to take colimits as in Section \ref{section_troplog-E} (they are already taken in the construction of $\cA_X$). Theorem \ref{thm_trop=skel} immediately yields the following Corollary.

\begin{corollary}\label{cor_skeleton=analytification}
There is a natural homeomorphism 
\begin{equation*}
\tilde{\mu}_X\mathrel{\mathop:}\big\vert\calA_X^\beth\big\vert\xrightarrow{\sim}\frakS(X)
\end{equation*}
that makes the diagram
\begin{center}\begin{tikzpicture}
  \matrix (m) [matrix of math nodes,row sep=2em,column sep=3em,minimum width=2em]
  {  
  & X^\beth & \\ 
  \big\vert\calA_X^\beth\big\vert  & & \frakS(X)  \\ 
  };
  \path[-stealth]
    (m-1-2) edge node [above left] {$\phi_X^\beth$} (m-2-1)
    		edge node [above right] {$\bfp_X$} (m-2-3)
    (m-2-1) edge node [below] {$\tilde{\mu}_X$} node [above] {$\sim$} (m-2-3);		
\end{tikzpicture}\end{center}
commute. 
\end{corollary}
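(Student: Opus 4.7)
The plan is to obtain $\tilde{\mu}_X$ as the composition of the two homeomorphisms already provided by Theorem~\ref{thm_tropicalization=analytification} and Theorem~\ref{thm_trop=skel}. More precisely, I would set
\begin{equation*}
\tilde{\mu}_X \;=\; J_X \circ \mu_X \;:\; \bigl\vert \mathcal{A}_X^\beth \bigr\vert \xrightarrow{\;\mu_X\;} \Sigmabar_X \xrightarrow{\;J_X\;} \frakS(X).
\end{equation*}
Each factor is a homeomorphism (the first by Theorem~\ref{thm_tropicalization=analytification}, the second by Theorem~\ref{thm_trop=skel}), so the composition is automatically a homeomorphism.

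For the commutativity of the triangle, I would paste together the two triangles already known to commute. Theorem~\ref{thm_tropicalization=analytification} gives $\mu_X \circ \phi_X^\beth = \trop_X$ on $X^\beth$, and Theorem~\ref{thm_trop=skel} gives $J_X \circ \trop_X = \bfp_X$. Composing,
\begin{equation*}
\tilde{\mu}_X \circ \phi_X^\beth \;=\; J_X \circ \mu_X \circ \phi_X^\beth \;=\; J_X \circ \trop_X \;=\; \bfp_X,
\end{equation*}
which is exactly the required commutativity.

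There is no real obstacle here: the corollary is formal and just records that the two independent identifications of $|\mathcal{A}_X^\beth|$ (with $\Sigmabar_X$, via analytification of the Artin fan) and of $\Sigmabar_X$ (with $\frakS(X)$, via Thuillier's section $J_X$) are compatible with the retractions $\phi_X^\beth$ and $\bfp_X$ of $X^\beth$. The only mild subtlety worth flagging is that Theorem~\ref{thm_trop=skel} is stated for $X$ logarithmically smooth over $\mathbf{k}$, so the corollary inherits the same hypothesis; beyond that, no additional argument is needed, which is why the excerpt says the result ``immediately'' follows.
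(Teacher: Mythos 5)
Your proof is correct and is exactly the argument the paper has in mind: compose $\mu_X$ from Theorem~\ref{thm_tropicalization=analytification} with $J_X$ from Theorem~\ref{thm_trop=skel}, and paste the two commuting triangles. Nothing more is needed, as the paper itself signals by saying the corollary follows ``immediately.''
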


\subsection{Stack quotients and tropicalization}

Let $X$ be a $T$-toric variety over $\ChDan{\mathbf{k}}$. In this case, Theorem \ref{thm_tropicalization=analytification} precisely says that on the underlying topological spaces the tropicalization map $\trop_X\mathrel{\mathop:}X^\beth\rightarrow \Sigmabar_X$ is nothing but the analytic stack quotient map $X^\beth\rightarrow [X^\beth/T^\beth]$. Due to the favorable algebro-geometric properties of toric varieties we can generalize this interpretation to general ground fields. 

Let $\ChDan{\mathbf{k}}$ be any non-Archimedean field\ChQile{,} and suppose that $X$ is a $T$-toric variety defined by a rational polyhedral fan $\Delta\subseteq N_\R$ as in Section \ref{section_troptor}. Denote by $T^\circ$ the non-Archimedean analytic subgroup 
\begin{equation*}
\big\{x\in T^{an}\:\big\vert\:\vert \chi^m\vert_x=1 \textrm{ for all }m\in M\big\}
\end{equation*}
of the analytic torus $T^{an}$. Note that, if $\ChDan{\mathbf{k}}$ is endowed with the trivial absolute value, then $T^\circ=T^\beth$. In general, we can think of $T^\circ$ as a non-Archimedean analogue of the real $n$-torus $N_{S^1}=N\otimes_\ZZ S^1$ naturally sitting in $N_{\CC^\ast}=N\otimes_\ZZ\CC^\ast$. The $T$-operation on $X$ induces an operation of $T^\circ$ on $X^{an}$.

\begin{theorem}[\cite{Ulirsch_nonArchstacks} Theorem 1.1]\label{thm_trop=quot}
There is a natural homeomorphism 
\begin{equation*}
\mu_X\mathrel{\mathop:}\big\vert[X^{an}/T^\circ]\big\vert\xrightarrow{\sim}N_\R(\Delta)
\end{equation*}
that makes the diagram
\begin{center}\begin{tikzpicture}
  \matrix (m) [matrix of math nodes,row sep=2em,column sep=3em,minimum width=2em]
  {  
  & X^{an} & \\ 
  \big\vert[X^{an}/T^\circ]\big\vert  & & N_\R(\Delta)  \\ 
  };
  \path[-stealth]
    (m-1-2) edge (m-2-1)
    		edge node [above right] {$\trop_X$} (m-2-3)
    (m-2-1) edge node [below] {$\mu_X$} node [above] {$\sim$} (m-2-3);		
\end{tikzpicture}\end{center}
commute. 
\end{theorem}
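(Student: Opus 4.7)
The plan is to descend $\trop_X$ to a continuous map from the analytic stack quotient, check it is bijective by reducing to a single torus orbit, and then invert it using a continuous section.

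First, I would verify $T^\circ$-invariance on each $T$-invariant affine chart $X_\sigma=\Spec \mathbf{k}[S_\sigma]$. The coaction $\chi^s\mapsto\chi^s\otimes\chi^s$ dual to the $T$-action, together with the tensor-seminorm description of points of $(T\times X_\sigma)^{an}$, gives $\vert\chi^s\vert_{t\cdot x}=\vert\chi^s\vert_t\cdot\vert\chi^s\vert_x$. Specializing to $t\in T^\circ$ leaves $\vert\chi^s\vert_x$ unchanged, so $\trop_\sigma$ (and hence $\trop_X$) is $T^\circ$-invariant. By the quotient-topology description of $\vert[X^{an}/T^\circ]\vert$ from the discussion of analytic stacks above, this produces a continuous factorization $\mu_X:\vert[X^{an}/T^\circ]\vert\to N_\R(\Delta)$.

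Second, I would show $\mu_X$ is bijective by stratifying. The $T$-action and $\trop_X$ both respect the stratification of $X$ by torus orbits $O_\tau\cong T/T_\tau$, so it suffices to show that on each $O_\tau^{an}$ the $T^\circ$-orbits coincide with the fibers of tropicalization into $N_\R/\Span(\tau)$. Since the restricted action factors through the surjection $T^\circ\twoheadrightarrow(T/T_\tau)^\circ$ (a consequence of the surjectivity of $\trop$ on the subtorus $T_\tau^{an}$), one reduces to the claim that for an arbitrary split torus $S$ with cocharacter lattice $L$, tropicalization induces a bijection $S^{an}/S^\circ\to L_\R$. This claim rests on three facts: $S^{an}$ is a topological group with multiplication given by the tensor product of multiplicative seminorms; $\trop$ is a continuous group homomorphism, because $\vert\chi^\ell\vert_{xy}=\vert\chi^\ell\vert_x\cdot\vert\chi^\ell\vert_y$; and $S^\circ=\ker(\trop)$, so the fibers of $\trop$ are precisely the cosets of $S^\circ$.

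Third, to promote $\mu_X$ to a homeomorphism I would construct a continuous inverse. The formula $J_\sigma(u)(f)=\max_s\vert a_s\vert\exp(-u(s))$ for $f=\sum_s a_s\chi^s\in\mathbf{k}[S_\sigma]$ from the proof of Proposition \ref{prop_toricskeleton} still defines a continuous section $J_\Delta:N_\R(\Delta)\to X^{an}$ of $\trop_X$ over any non-Archimedean field $\mathbf{k}$, since multiplicativity uses only the ultrametric inequality. Composing with the quotient yields a continuous map $\nu:N_\R(\Delta)\to\vert[X^{an}/T^\circ]\vert$, with $\mu_X\circ\nu=\id$ by construction and $\nu\circ\mu_X=\id$ by the bijectivity in step two; hence $\mu_X$ is a homeomorphism.

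The main obstacle I anticipate is the torus bijectivity in the second step. The subtle point is that the Cartesian product $S^{an}\times S^{an}$ differs from $(S\times S)^{an}$, so even asserting that $S^{an}$ is a topological group requires checking that the composite $S^{an}\times S^{an}\to(S\times S)^{an}\xrightarrow{m^{an}}S^{an}$ is continuous and that $\trop$ is a homomorphism for this multiplication. Once this standard piece of Berkovich theory is verified, the kernel-coset argument goes through cleanly and the section $J_\Delta$ exhibits $\mu_X$ as a homeomorphism.
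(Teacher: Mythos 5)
Your strategy — descend $\trop_X$ via $T^\circ$-invariance, prove bijectivity stratum-by-stratum reducing to tori, and invert with the section $J_\Delta$ composed with the quotient map — is sound, and is close in spirit to the cited proof, which is organized around the lemma that the skeleton $\frakS(X)$ is precisely the set of $T^\circ$-invariant points of $X^{an}$. Both hinge on the same multiplicativity facts about seminorms, packaged slightly differently.

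However, the step you flag as ``standard Berkovich theory'' hides a real error: $S^{an}$ is \emph{not} a topological group, and no amount of checking continuity will make it one. The natural candidate product, assigning to $(x,y)\in S^{an}\times S^{an}$ the Gauss-type seminorm with weights $|\chi^\ell|_x\,|\chi^\ell|_y$, sends a non-classical point $x$ paired with its ``inverse'' to the Gauss point, not to the identity $e\in S(\mathbf{k})$, so inverses fail. The correct formulation is to abandon the set-theoretic coset picture and work with the groupoid $(S\times S)^{an}\rightrightarrows S^{an}$ directly. The argument is then cleaner than you anticipate and needs no group structure at all: let $d\colon S\times S\to S$ denote the difference map $(u,v)\mapsto vu^{-1}$. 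For \emph{any} point $y\in(S\times S)^{an}$ with projections $p_1(y)=x_1$, $p_2(y)=x_2$, multiplicativity of the single seminorm $|\cdot|_y$ on $\mathbf{k}[M\oplus M]$ gives
\begin{equation*}
|\chi^\ell|_{d(y)} \;=\; |\,\chi^{-\ell}\otimes\chi^{\ell}\,|_y \;=\; |\chi^{-\ell}\otimes 1|_y\cdot|1\otimes\chi^{\ell}|_y \;=\; |\chi^\ell|_{x_1}^{-1}\,|\chi^\ell|_{x_2}.
\end{equation*}
Thus if $\trop(x_1)=\trop(x_2)$ then $d(y)\in S^\circ$ for every lift $y$ of $(x_1,x_2)$, and since $(S\times S)^{an}\to S^{an}\times S^{an}$ is surjective such a lift exists; conversely the same computation shows that points in the same $S^\circ$-orbit have the same tropicalization. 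This gives the fibers-equal-orbits claim without any appeal to a group law on $S^{an}$. The same identity (specialized to $p_1(y)\in T^\circ$) also makes your first step — $T^\circ$-invariance of $\trop_\sigma$ — precise: the shorthand $|\chi^s|_{t\cdot x}$ should be replaced by $|\chi^s|_{a(y)}$ for $y\in(T\times X_\sigma)^{an}$, after which your computation is exactly right. With that correction the proof goes through.
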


The proof Theorem \ref{thm_trop=quot} is based on establishing that the skeleton $\frakS(X)$ of $X$ is equal to the set of $T^\circ$-invariant points of $X^{an}$. Then the statement follows, since by \cite[Proposition 5.4 (ii)]{Ulirsch_nonArchstacks} the topological space $\big\vert[X^{an}/T^\circ]\big\vert$ is the colimit of the maps 
\begin{equation*}
T^\circ\times X^{an}\rightrightarrows X^{an} \ .
\end{equation*}

\begin{figure}[H]
\begin{center}
\begin{tikzpicture}[scale=.85]
\draw (6,2) circle (0.08 cm);
\fill (6,0) circle (0.08 cm);
\fill (4,-2) circle (0.08 cm);
\fill (5,-2) circle (0.08 cm);
\fill (6,-2) circle (0.08 cm);
\fill (7,-2) circle (0.08 cm);
\fill (8,-2) circle (0.08 cm);

\draw (6,0) -- (6,1.92);
\draw (6,0) -- (4,-2);
\draw (6,0) -- (5,-2);
\draw (6,0) -- (6,-2);
\draw (6,0) -- (7,-2);
\draw (6,0) -- (8,-2);

\node at (6,-2.5) {$0$};
\node at (6,2.5) {$\infty$};
%\node at (8.5,-2) {$\A^1$};
\node at (6.5,0) {$\eta$};
\node at (6,-3.5) {$(\A^1)^{an}$};

\fill (4.3,-1) circle (0.03cm);
\fill (4.1,-1) circle (0.03cm);
\fill (3.9,-1) circle (0.03cm);
\fill (7.7,-1) circle (0.03cm);
\fill (7.9,-1) circle (0.03cm);
\fill (8.1,-1) circle (0.03cm);

\draw (0,2) circle (0.08 cm);
\fill (0,0) circle (0.08 cm);
\fill (-2,-2) circle (0.08 cm);
\fill (-1,-2) circle (0.08 cm);
\draw (0,-2) circle (0.08 cm);
\fill (1,-2) circle (0.08 cm);
\fill (2,-2) circle (0.08 cm);

%\draw (0,0) -- (0,1.92);
\draw (0,0) -- (-2,-2);
\draw (0,0) -- (-1,-2);
%\draw (0,0) -- (0,-2);
\draw (0,0) -- (1,-2);
\draw (0,0) -- (2,-2);

\node at (0,-2.5) {$0$};
\node at (0,2.5) {$\infty$};
%\node at (2.5,-2) {$\G_m$};
\node at (0.5,0) {$\eta$};
\node at (0,-3.5) {$\G_m^{\circ}$};

\fill (-1.7,-1) circle (0.03cm);
\fill (-1.9,-1) circle (0.03cm);
\fill (-2.1,-1) circle (0.03cm);
\fill (1.7,-1) circle (0.03cm);
\fill (1.9,-1) circle (0.03cm);
\fill (2.1,-1) circle (0.03cm);

\fill (11,0) circle (0.08cm);
\fill (11,-2) circle (0.08cm);
\draw (11,2) circle (0.08cm);

\draw (11,0) -- (11,-2);
\draw (11,0) -- (11,1.92);

\node at (11.5,0) {$\eta$};
\node at (11,-3.5) {$\big[(\A^1)^{an}\big/\G_m^\circ\big]$};
\node at (11,-2.5) {$0$};
\node at (11,2.5) {$\infty$};

\end{tikzpicture}
\end{center}
\caption{Consider the affine line $\A^1$ over a trivially valued field $\ChDan{\mathbf{k}}$. The non-Archimedean unit circle $\G_m^\circ$ is given as the subset of elements in $x\in(\A^1)^{an}$ with $\vert t\vert_x=1$, where $t$ denotes a coordinate on $\A^1$. The skeleton $\frakS(\A^1)$ of $(\A^1)^{an}$ is the line connecting $0$ to $\infty$. It is precisely the set of ``$\G_m^\circ$-invariant'' points in $(\A^1)^{an}$\ChQile{,} and therefore naturally homeomorphic to the topological space underlying $\big[(\A^1)^{an}\big/\G_m^\circ\big]$ (see \cite[Example 6.2]{Ulirsch_nonArchstacks}).
}
\end{figure}

\section{Where we are, where we want to go}
\label{sec:future}

\subsection{Skeletons fans and tropicalization over non-trivially valued fields}

In almost all of our discussion, we have constructed Kato fans, Artin fans and skeletons for a logarithmic variety over a trivially valued field $k$. One exception is the discussion of toric varieties in Section \ref{section_troptor}. This is quite useful and important: given a subvariety $Y$ of a toric variety $X$, over a non-trivially valued field, the tropicalization $\trop(Y)$ of $Y$ is a polyhedral subcomplex of $N_\RR(\Delta)$ which is not itself a fan. Much of the impact of tropical geometry relies on the way $\trop(Y)$ reflects on the geometry of $Y$. So even though the toric variety $X$ itself can be defined over a field with trivial valuation, the fact that $Y$ - whose field of definition is non-trivially valued - has a combinatorial shadow is fundamental.

Can we define Artin fans over non-trivially valued fields? What should their structure be?   In what generality can we canonically associate a skeleton to  a logarithmic structure? 

Some results in this directions are available in the recent paper of Gubler, Rabinoff and Werner \cite{GRW}. See also Werner's contribution to this volume \cite{Werner}. 

\subsection{Improved fans and moduli spaces}

One of the primary applications of tropical geometry, and therefore of fans, is through moduli spaces. Mikhalkin's correspondence theorem \cite{Mikhalkin}, Nishinou and Siebert's vast extension \cite{NS}, the work \cite{CMR} of Cavalieri, Markwig and Ranganathan, and the manuscript \cite{ACGS}, all show that tropical moduli spaces $M^{\trop}(X^{\trop})$ of tropical curves in tropical varieties serve as good approximations of the tropicalization $\trop(M(X))$ of moduli spaces $M(X)$ of algebraic curves in algebraic varieties. But the picture is not perfect: 

\begin{enumerate} 
\item \ChDan{Apart from very basic cases of the moduli space of curves itself \cite{ACP} and genus-0 maps with toric targets \cite{Ranganathan-toric}} the tropical moduli space does not  coincide with the tropicalization of the algebraic moduli space $$M^{\trop}(X^{\trop}) \ \ \neq\ \ \ \trop(M(X)).$$
\item Even when it does, it is always a coarse moduli space, lacking the full power of universal families.
\end{enumerate}

These seem to be two distinct challenges, but experience shows that they are closely intertwined. It also seems that the following problem is part of the puzzle:
\begin{enumerate}
\item[(3)] The construction to the Artin fan $\cA_X$ of a logarithmic scheme $X$ is not  functorial for all morphisms of logarithmic schemes.
\end{enumerate}

The following program might inspire one to go some distance towards these challenges:

\begin{enumerate}
\item Good combinatorial moduli:
\begin{enumerate}
\item Construct a satisfactory monoidal theory of cone spaces and cone stacks as described in Sections \ref{sec:cone-space-motivation} and \ref{sec:cone-stack-motivation}.
\item Construct a corresponding enhancement of $M_{g,n}^{\trop}$ which is a fine moduli stack of tropical curves, with a universal family.
\end{enumerate}
This is part of current work of Cavalieri, Chan, Ulirsch and Wise.
\item Stacky Artin fans: 
\begin{enumerate}
\item Find a way to relax the representability condition in the definition of $\cA_X$ so that the enhanced moduli spaces above are canonically associated to the moduli stacks $M_{g,n}$
\item Try to extend all of the above to moduli of maps.
\end{enumerate}
\end{enumerate}

\bibliographystyle{amsalpha}
\bibliography{logsurvey}

\providecommand{\bysame}{\leavevmode\hbox to3em{\hrulefill}\thinspace}
\providecommand{\MR}{\relax\ifhmode\unskip\space\fi MR }
% \MRhref is called by the amsart/book/proc definition of \MR.
\providecommand{\MRhref}[2]{%
  \href{http://www.ams.org/mathscinet-getitem?mr=#1}{#2}
}
\providecommand{\href}[2]{#2}
\begin{thebibliography}{KKMSD73}

\bibitem[AC14]{AC}
Dan Abramovich and Qile Chen, \emph{Stable logarithmic maps to
  {D}eligne-{F}altings pairs {II}}, Asian J. Math. \textbf{18} (2014), no.~3,
  465--488. \MR{3257836}

\bibitem[ACFW13]{ACFW}
Dan Abramovich, Charles Cadman, Barbara Fantechi, and Jonathan Wise,
  \emph{Expanded degenerations and pairs}, Comm. Algebra \textbf{41} (2013),
  no.~6, 2346--2386. \MR{3225278}

\bibitem[ACGS13]{ACGS}
Dan Abramovich, Qile Chen, Mark Gross, and Bernd Siebert, \emph{Decomposition
  of degenerate gromov--witten invariants}, In preparation, 2013.

\bibitem[ACMW]{ACMW}
D.~{Abramovich}, Q.~{Chen}, S.~{Marcus}, and J.~{Wise}, \emph{{Boundedness of
  the space of stable logarithmic maps}}, Journal of the EMS, to appear.
  \url{arXiv:1408.0869}.

\bibitem[ACP]{ACP}
D.~{Abramovich}, L.~{Caporaso}, and S.~{Payne}, \emph{{The tropicalization of
  the moduli space of curves}}, Annales de L'ENS, to appear.
  \url{arXiv:1212.0373}.

\bibitem[ACW10]{ACW}
D.~{Abramovich}, C.~{Cadman}, and J.~{Wise}, \emph{{Relative and orbifold
  Gromov-Witten invariants}}, April 2010, \url{arXiv:1004.0981}.

\bibitem[AF]{AF}
D.~{Abramovich} and B.~{Fantechi}, \emph{{Orbifold techniques in degeneration
  formulas}}, Annali della SNS, to appear. \url{arXiv:1103.5132}.

\bibitem[AMW14]{AMW}
D.~{Abramovich}, S.~{Marcus}, and J.~{Wise}, \emph{{Comparison theorems for
  Gromov-Witten invariants of smooth pairs and of degenerations}}, Annales de
  l'Institut Fourier \textbf{64} (2014), no.~4, 1611--1667.

\bibitem[AW13]{AW}
D.~{Abramovich} and J.~{Wise}, \emph{{Invariance in logarithmic Gromov-Witten
  theory}}, June 2013, \url{arXiv:1306.1222}.

\bibitem[Ber90]{Berkovich_book}
Vladimir~G. Berkovich, \emph{Spectral theory and analytic geometry over
  non-{A}rchimedean fields}, Mathematical Surveys and Monographs, vol.~33,
  American Mathematical Society, Providence, RI, 1990. \MR{1070709 (91k:32038)}

\bibitem[Ber93]{Berkovich_etalecoho}
\bysame, \emph{\'{E}tale cohomology for non-{A}rchimedean analytic spaces},
  Inst. Hautes \'Etudes Sci. Publ. Math. (1993), no.~78, 5--161 (1994).
  \MR{1259429 (95c:14017)}

\bibitem[Ber99]{Berkovich-contraction}
V.~Berkovich, \emph{Smooth {$p$}-adic analytic spaces are locally
  contractible}, Invent. Math. \textbf{137} (1999), no.~1, 1--84. \MR{1702143
  (2000i:14028)}

\bibitem[BPR]{BakerPayneRabinoff_nonArchtrop}
M.~{Baker}, S.~{Payne}, and J.~{Rabinoff}, \emph{{Nonarchimedean geometry,
  tropicalization, and metrics on curves}}, Algebraic Geometry, to appear.
  \url{arXiv:1104.0320}.

\bibitem[Cad07]{cadman}
Charles Cadman, \emph{Using stacks to impose tangency conditions on curves},
  Amer. J. Math. \textbf{129} (2007), no.~2, 405--427. \MR{MR2306040
  (2008g:14016)}

\bibitem[Che14]{Chen}
Qile Chen, \emph{Stable logarithmic maps to {D}eligne-{F}altings pairs {I}},
  Ann. of Math. (2) \textbf{180} (2014), no.~2, 455--521. \MR{3224717}

\bibitem[CLS11]{tv}
David~A. Cox, John~B. Little, and Henry~K. Schenck, \emph{Toric varieties},
  Graduate Studies in Mathematics, vol. 124, American Mathematical Society,
  Providence, RI, 2011. \MR{2810322 (2012g:14094)}

\bibitem[CMR]{CMR}
R.~{Cavalieri}, H.~{Markwig}, and D.~{Ranganathan}, \emph{{Tropicalizing the
  Space of Admissible Covers}}, Math. Annalen, to appear.
  \url{arXiv:1401.4626}.

\bibitem[CMW12]{CMW}
Renzo Cavalieri, Steffen Marcus, and Jonathan Wise, \emph{Polynomial families
  of tautological classes on $\mathcal{M}_{g,n}^{rt}$}, J. Pure Appl. Algebra
  \textbf{216} (2012), no.~4, 950--981.

\bibitem[Cos06]{costello}
Kevin Costello, \emph{Higher genus {G}romov-{W}itten invariants as genus zero
  invariants of symmetric products}, Ann. of Math. (2) \textbf{164} (2006),
  no.~2, 561--601.

\bibitem[Dan78]{Danilov}
V.~I. Danilov, \emph{The geometry of toric varieties}, Uspekhi Mat. Nauk
  \textbf{33} (1978), no.~2(200), 85--134, 247. \MR{495499 (80g:14001)}

\bibitem[Dem70]{Demazure}
Michel Demazure, \emph{Sous-groupes alg\'ebriques de rang maximum du groupe de
  {C}remona}, Ann. Sci. \'Ecole Norm. Sup. (4) \textbf{3} (1970), 507--588.
  \MR{0284446 (44 \#1672)}

\bibitem[EKL06]{EinsiedlerKapranovLind_nonArchamoebas}
Manfred Einsiedler, Mikhail Kapranov, and Douglas Lind, \emph{Non-{A}rchimedean
  amoebas and tropical varieties}, J. Reine Angew. Math. \textbf{601} (2006),
  139--157. \MR{2289207 (2007k:14038)}

\bibitem[Ful93]{Fulton_toric}
William Fulton, \emph{Introduction to toric varieties}, Annals of Mathematics
  Studies, vol. 131, Princeton University Press, Princeton, NJ, 1993, The
  William H. Roever Lectures in Geometry. \MR{1234037 (94g:14028)}

\bibitem[GRW14]{GRW}
W.~{Gubler}, J.~{Rabinoff}, and A.~{Werner}, \emph{{Skeletons and
  tropicalizations}}, April 2014, \url{arXiv:1404.7044}.

\bibitem[GS13]{GS}
Mark Gross and Bernd Siebert, \emph{Logarithmic {G}romov-{W}itten invariants},
  J. Amer. Math. Soc. \textbf{26} (2013), no.~2, 451--510. \MR{3011419}

\bibitem[Gub07]{Gubler_tropanalvar}
Walter Gubler, \emph{Tropical varieties for non-{A}rchimedean analytic spaces},
  Invent. Math. \textbf{169} (2007), no.~2, 321--376. \MR{2318559
  (2008k:14085)}

\bibitem[Gub13]{Gubler_guide}
\bysame, \emph{A guide to tropicalizations}, Algebraic and combinatorial
  aspects of tropical geometry, Contemp. Math., vol. 589, Amer. Math. Soc.,
  Providence, RI, 2013, pp.~125--189. \MR{3088913}

\bibitem[HL]{Hrushovski-Loeser}
E.~{Hrushovski} and F.~{Loeser}, \emph{{Non-archimedean tame topology and
  stably dominated types}}, Annals of Math. Studies, to appear.
  \url{arxiv:1009.0252}.

\bibitem[{Ion}11]{ionel}
E.-N. {Ionel}, \emph{{GW Invariants Relative Normal Crossings Divisors}}, March
  2011, \url{arXiv:1103.3977}.

\bibitem[IP03]{Ionel-Parker}
Eleny-Nicoleta Ionel and Thomas~H. Parker, \emph{Relative {G}romov-{W}itten
  invariants}, Ann. of Math. (2) \textbf{157} (2003), no.~1, 45--96.
  \MR{1954264 (2004a:53112)}

\bibitem[IP04]{Ionel-Parker2}
\bysame, \emph{The symplectic sum formula for {G}romov-{W}itten invariants},
  Ann. of Math. (2) \textbf{159} (2004), no.~3, 935--1025. \MR{2113018
  (2006b:53110)}

\bibitem[Kaj08]{Kajiwara_troptoric}
Takeshi Kajiwara, \emph{Tropical toric geometry}, Toric topology, Contemp.
  Math., vol. 460, Amer. Math. Soc., Providence, RI, 2008, pp.~197--207.
  \MR{2428356 (2010c:14078)}

\bibitem[Kat89]{Kato}
Kazuya Kato, \emph{Logarithmic structures of {F}ontaine-{I}llusie}, Algebraic
  analysis, geometry, and number theory ({B}altimore, {MD}, 1988), Johns
  Hopkins Univ. Press, Baltimore, MD, 1989, pp.~191--224. \MR{1463703
  (99b:14020)}

\bibitem[Kat94]{Kato-toric}
\bysame, \emph{Toric singularities}, Amer. J. Math. \textbf{116} (1994), no.~5,
  1073--1099. \MR{1296725 (95g:14056)}

\bibitem[Kim10]{Kim}
Bumsig Kim, \emph{Logarithmic stable maps}, New developments in algebraic
  geometry, integrable systems and mirror symmetry ({RIMS}, {K}yoto, 2008),
  Adv. Stud. Pure Math., vol.~59, Math. Soc. Japan, Tokyo, 2010, pp.~167--200.
  \MR{2683209 (2011m:14019)}

\bibitem[KKMSD73]{KKMS}
G.~Kempf, Finn~Faye Knudsen, D.~Mumford, and B.~Saint-Donat, \emph{Toroidal
  embeddings. {I}}, Lecture Notes in Mathematics, Vol. 339, Springer-Verlag,
  Berlin, 1973. \MR{0335518 (49 \#299)}

\bibitem[Li01]{Li1}
Jun Li, \emph{Stable morphisms to singular schemes and relative stable
  morphisms}, J. Differential Geom. \textbf{57} (2001), no.~3, 509--578.
  \MR{MR1882667 (2003d:14066)}

\bibitem[Li02]{Li2}
\bysame, \emph{A degeneration formula of {GW}-invariants}, J. Differential
  Geom. \textbf{60} (2002), no.~2, 199--293. \MR{MR1938113 (2004k:14096)}

\bibitem[LR01]{Li-Ruan}
An-Min Li and Yongbin Ruan, \emph{Symplectic surgery and {G}romov-{W}itten
  invariants of {C}alabi-{Y}au 3-folds}, Invent. Math. \textbf{145} (2001),
  no.~1, 151--218. \MR{1839289 (2002g:53158)}

\bibitem[{Mac}13]{MacPherson}
A.~W. {Macpherson}, \emph{{Skeleta in non-Archimedean and tropical geometry}},
  November 2013, \url{arXiv:1311.0502}.

\bibitem[Mik05]{Mikhalkin}
Grigory Mikhalkin, \emph{Enumerative tropical algebraic geometry in {$\Bbb
  R^2$}}, J. Amer. Math. Soc. \textbf{18} (2005), no.~2, 313--377. \MR{2137980
  (2006b:14097)}

\bibitem[NS06]{NS}
Takeo Nishinou and Bernd Siebert, \emph{Toric degenerations of toric varieties
  and tropical curves}, Duke Math. J. \textbf{135} (2006), no.~1, 1--51.
  \MR{2259922 (2007h:14083)}

\bibitem[Oda78]{MO}
Tadao Oda, \emph{Torus embeddings and applications}, Tata Institute of
  Fundamental Research Lectures on Mathematics and Physics, vol.~57, Tata
  Institute of Fundamental Research, Bombay; by Springer-Verlag, Berlin-New
  York, 1978, Based on joint work with Katsuya Miyake. \MR{546291 (81e:14001)}

\bibitem[Oda88]{Oda}
\bysame, \emph{Convex bodies and algebraic geometry}, Ergebnisse der Mathematik
  und ihrer Grenzgebiete (3) [Results in Mathematics and Related Areas (3)],
  vol.~15, Springer-Verlag, Berlin, 1988, An introduction to the theory of
  toric varieties, Translated from the Japanese. \MR{922894 (88m:14038)}

\bibitem[Ogu06]{Ogus}
Arthur Ogus, \emph{Lectures on logarithmic algebraic geometry}, 2006.

\bibitem[Ols03]{Olsson_log}
Martin~C. Olsson, \emph{Logarithmic geometry and algebraic stacks}, Ann. Sci.
  \'Ecole Norm. Sup. (4) \textbf{36} (2003), no.~5, 747--791.

\bibitem[Ols05]{Olsson-log-cc}
\bysame, \emph{The logarithmic cotangent complex}, Math. Ann. \textbf{333}
  (2005), no.~4, 859--931. \MR{2195148 (2006j:14017)}

\bibitem[{Par}11]{Parker1}
B.~{Parker}, \emph{{Gromov Witten invariants of exploded manifolds}},
  \url{arXiv:1102.0158}, February 2011.

\bibitem[Par12]{Parker}
Brett Parker, \emph{Exploded manifolds}, Adv. Math. \textbf{229} (2012), no.~6,
  3256--3319. \MR{2900440}

\bibitem[Pay09]{Payne_anallimittrop}
Sam Payne, \emph{Analytification is the limit of all tropicalizations}, Math.
  Res. Lett. \textbf{16} (2009), no.~3, 543--556. \MR{2511632 (2010j:14104)}

\bibitem[PPS13]{PopescuPampuStepanov_localtrop}
Patrick Popescu-Pampu and Dmitry Stepanov, \emph{Local tropicalization},
  Algebraic and combinatorial aspects of Tropical Geometry, Contemp. Math.,
  vol. 589, Amer. Math. Soc., Providence, RI, 2013.

\bibitem[PY14]{PortaYu_higherGAGA}
Mauro Porta and Tony~Yue Yu, \emph{Higher analytic stacks and {GAGA} theorems},
  arXiv:1412.5166 [math] (2014).

\bibitem[Rab12]{Rabinoff_newtonpolygon}
Joseph Rabinoff, \emph{Tropical analytic geometry, {N}ewton polygons, and
  tropical intersections}, Adv. Math. \textbf{229} (2012), no.~6, 3192--3255.
  \MR{2900439}

\bibitem[{Ran}15a]{Ranganathan-toric}
D.~{Ranganathan}, \emph{{Moduli of rational curves in toric varieties and
  non-Archimedean geometry}}, June 2015, \url{arXiv:1506.03754}.

\bibitem[{Ran}15b]{Ranganathan-super}
\bysame, \emph{{Superabundant curves and the Artin fan}}, April 2015,
  \url{arXiv:1504.08199}.

\bibitem[Tat71]{Tate_rigidanalyticspaces}
John Tate, \emph{Rigid analytic spaces}, Invent. Math. \textbf{12} (1971),
  257--289. \MR{0306196 (46 \#5323)}

\bibitem[Tem15]{Temkin-intro}
Michael Temkin, \emph{Introduction to {B}erkovich analytic spaces}, Berkovich
  Spaces and Applications (Antoine Ducros, Charles Favre, and Johannes Nicaise,
  eds.), Lecture Notes in Mathematics, vol. 2119, Springer International
  Publishing, 2015, pp.~3--66.

\bibitem[Thu07]{Thuillier}
A.~Thuillier, \emph{G\'eom\'etrie toro\"\i dale et g\'eom\'etrie analytique non
  archim\'edienne. {A}pplication au type d'homotopie de certains sch\'emas
  formels}, Manuscripta Math. \textbf{123} (2007), no.~4, 381--451. \MR{2320738
  (2008g:14038)}

\bibitem[{Uli}13]{Ulirsch_functroplogsch}
M.~{Ulirsch}, \emph{{Functorial tropicalization of logarithmic schemes: The
  case of constant coefficients}}, October 2013, \url{arXiv:1310.6269}.

\bibitem[{Uli}14]{Ulirsch_nonArchstacks}
\bysame, \emph{{A geometric theory of non-Archimedean analytic stacks}},
  October 2014, \url{arxiv:1410.2216}.

\bibitem[Uli15a]{Ulirsch_tropcomplogreg}
Martin Ulirsch, \emph{Tropical compactification in log-regular varieties},
  Math. Z. \textbf{280} (2015), no.~1-2, 195--210. \MR{3343903}

\bibitem[Uli15b]{Ulirsch_Artinfanstrop}
\bysame, \emph{Tropical geometry of logarithmic schemes}, Ph.D. thesis, Brown
  Univrsity, 2015, pp.~viii+160.

\bibitem[{Wer}15]{Werner}
A.~{Werner}, \emph{{Analytification and Tropicalization over non-archimedean
  fields}}, June 2015, this volume, to appear. \url{arXiv:1506.04846}.

\bibitem[{Yu}14]{Yu_Gromovcompactness}
T.~Y. {Yu}, \emph{{Gromov compactness in non-archimedean analytic geometry}},
  January 2014, \url{arXiv:1401.6452}.

\end{thebibliography}

\end{document}